\newcolumntype{L}[1]{>{\raggedright\arraybackslash}p{#1}} 
\newcolumntype{C}[1]{>{\centering\arraybackslash}p{#1}} 
\newcolumntype{R}[1]{>{\raggedleft\arraybackslash}p{#1}} 
  \newcommand{\E}{\mathcal{E}}
  \newcommand{\F}{\mathcal{F}}
  \newcommand{\R}{\mathcal{R}}  
  \newcommand{\A}{\mathcal{A}}
  \newcommand{\D}{\mathcal{D}}  
  \newcommand{\C}{\mathcal{C}}  
  \renewcommand{\P}{\mathcal{P}} 
\xpatchcmd{\proof}{\@addpunct{.}}{\normalfont\,\@addpunct{:}}{}{} 
  \newtheoremstyle{dotless}{}{}{\itshape}{}{\bfseries}{:}{ }{}
    \newtheoremstyle{dotlessrem}{}{}{}{}{\bfseries}{:}{ }{}
  \theoremstyle{dotless}
\newtheorem{lemma}{Lemma}[section]
\newtheorem{proposition}[lemma]{Proposition}
\newtheorem{definition}{Definition}[section]
\newtheorem{theorem}[lemma]{Theorem}
\newtheorem{corollary}[lemma]{Corollary}
 \theoremstyle{dotlessrem}
\newtheorem*{remark}{Remark}
\renewenvironment{abstract}
 {\small
  \begin{center}
  \bfseries \abstractname\vspace{-.5em}\vspace{0pt}
  \end{center}
  \list{}{
    \setlength{\leftmargin}{1.5cm}%
    \setlength{\rightmargin}{\leftmargin}%
  }%
  \item\relax}
 {\endlist}
\begin{document}
\begin{center}
\Large\textbf{Spectral asymptotics for Stretched Fractals}\\
\large Elias Hauser\footnote{Institute of Stochastics and Applications, University of Stuttgart, Pfaffenwaldring 57, 70569 Stuttgart, Germany, E-mail: elias.hauser@mathematik.uni-stuttgart.de}\end{center}
\tableofcontents \newpage
\begin{abstract}
The Stretched Sierpinski Gasket (or Hanoi attractor) was subject of several prior works. In this work we use this idea of \textit{stretching} self-similar sets to obtain non-self-similar ones. We are able to do this for a subset of the connected p.c.f. self-similar sets that fulfill a certain connectivity condition. We construct Dirichlet forms and study the associated self-adjoint operators by calculating the Hausdorff dimension w.r.t. the resistance metric as well as the leading term of the eigenvalue counting function.
\end{abstract}
\section{Introduction}
In this work we introduce the so called \textit{stretched fractals} which originate by altering the construction of p.c.f. self-similar sets. We construct Dirichlet forms on these non-self-similar sets and conduct spectral asymptotics on the associated self-adjoint operators.\\

Studying asymptotic behavior of the spectrum of laplacians is an important tool in physics, for example, to understand the behavior of heat and waves in the underlying media. We are in particular interested in the asymptotic growing of the eigenvalues. For the classical laplacian on bounded domains $\Omega\subset \mathbb{R}^d$ we know that the Dirichlet eigenvalue counting function $N_D^\Omega$ grows asymptotically like
\begin{align*}
N_D^\Omega(x)\sim x^\frac{d}{2}
\end{align*} 
This result is originally due to Weyl \cite{we11}. Nature, however, is not build up of smooth structures, but rather porous, disordered and finely structured material. In the 70s, therefore, the interest grew in studying fractals, which are much better suited to describe natural structures and phenomena. To understand the physical behavior on such objects we need a laplacian. Kigami constructed such an operator first on the Sierpinski Gasket \cite{kig89} and later on the so called p.c.f. self-similar sets \cite{kig93} by a sequence of operators on the approximating graphs. This is called the analytical approach. Shima \cite{shim} and Fukushima-Shima \cite{fushim} calculated the leading term of the eigenvalue counting function of the laplacian on the Sierpinski Gasket. Later Kigami and Lapidus calculated the asymptotic growing for p.c.f. self-similar sets in \cite{kig93}. Contrary to a conjecture by Berry in \cite{ber1,ber2}, the leading term does not coincide with $\frac{d_H}2$, where $d_H$ denotes the Hausdorff dimension. \\

In this work we want to examine some non-self-similar sets and calculate the leading term for operators in such a case. One example is the Stretched Sierpinski Gasket which was analyzed geometrically in \cite{af12}.
\begin{figure}[ht]
\centering
\includegraphics[width=\textwidth]{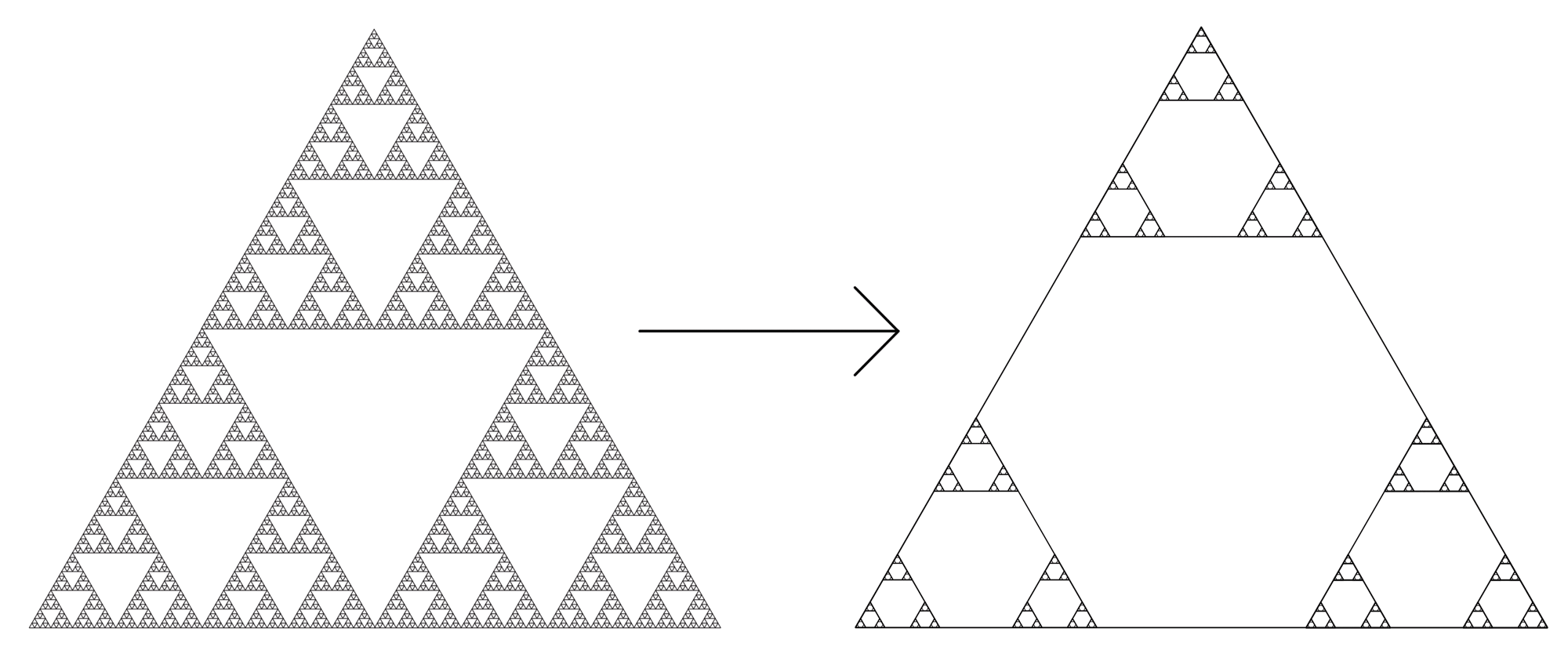}
\caption{Stretching the Sierpinski Gasket}\label{ssgfigure}
\end{figure}
 Let $p_1,p_2,p_3$ be the vertex points of an equilateral triangle with side length 1 and $\alpha \in(0,1)$
\begin{align*}
G_i(x)&:=\frac{1-\alpha}2(x-p_i) +p_i, \ i\in \{1,2,3\}	\\[0.1cm]
e_1&:=\{\lambda G_2(p_3)+(1-\lambda)G_2(p_3) \ : \lambda \in  (0,1)\}\quad e_2,e_3 \ \text{analog}
\end{align*} 
The unique compact set $K$ which fulfills 
\begin{align*}
K=G_1(K)\cup G_2(K) \cup G_3(K) \cup e_1\cup e_2\cup e_3 
\end{align*}
is called the Stretched Sierpinski Gasket. In \cite{afk17} the authors constructed resistance forms on this set and in \cite{haus17} the author calculated the leading term of the associated operators after introducing measures. These results were refined in \cite{haus18} where it was shown that there are oscillations in the leading term of the eigenvalue counting function. These oscillations are typical for such highly symmetrical sets. \\

In this work we want to generalize the idea of \textit{stretching} to more p.c.f. self-similar sets which we will call \textit{stretched fractals}. This work is structured as follows. In chapter~\ref{chapter2} we construct stretched fractals and include some examples. In chapter~\ref{chapter3} we build a sequence of approximating graphs and introduce the notion of \textit{regular sequences of harmonic structures} which is a generalization of the term \textit{harmonic structure} in the self-similar case. Afterwards we construct resistance forms on stretched fractals in chapter~\ref{chapter4}, presumed we have a harmonic structure. In chapter~\ref{chap_meas_oper} we describe measures on stretched fractals which allows us to get Dirichlet forms from the resistance forms and thus the self-adjoint operators that we want to study. In chapter~\ref{chap_cond_haus} we introduce some conditions that are necessary to calculate both Hausdorff dimension in resistance metric as well as the leading term of the eigenvalue counting function which is done in chapter~\ref{chap_cond_haus} resp. chapter~\ref{chapter7}. Lastly we list some open problems and further ideas in chapter~\ref{chapter8}.

\section{Stretched fractals}\label{chapter2}
In \cite{haus17,haus18} we analyzed the Stretched Sierpinski Gasket analytically. It is constructed by lowering the contraction ratios of the similitudes of the self-similar Sierpinski Gasket and filling the arising holes with one-dimensional lines (Figure~\ref{ssgfigure}). 

We want to generalize this construction of \textit{stretching} to more self-similar fractals. For the Sierpinski Gasket $S$ it was essential that two copies $F_i(S)$ and $F_j(S)$ only intersect at a single point. Therefore, it is clear how we have to connect these copies if we stretch them apart. In general we need the fractal that we want to stretch to be finitely ramified. In this case we can connect the copies that get stretched away from each other by one-dimensional lines. These are the so called \textit{p.c.f. self-similar fractals} introduced by Kigami in \cite{kig93}. The notion of p.c.f. self-similar sets is well known thus we only want to recall the most important properties and also alter the notion slightly. 

\subsection{Definition of stretched fractals}
Let $(F_1,\ldots,F_N)$ be the IFS of a connected p.c.f. self-similar fractal $F\subset \mathbb{R}^d$. That means
\begin{align*}
F=\bigcup_{i=1}^N F_i(F)
\end{align*}
where $F_i$ are contracting similitudes with distinct unique fixed points $q_i$.\\

We will introduce some notation that is commonly used. We denote the alphabet by $\A:=\{1,\ldots,N\}$ and all words of finite length $\A^\ast:=\bigcup_{n\geq 1}\A^n$ and $\A^\ast_0:=\bigcup_{n\geq 0}\A^n$ if we also want to include the empty word. For $w=(w_1,\ldots,w_n)$ we denote by $F_w:=F_{w_1}\circ\ldots\circ F_{w_n}$ the composition of the similitudes and by $F_w:=\operatorname{id}$ the identity if $w=\emptyset$ is the empty word.\\

We can now define the critical set $\C$. This set plays an important role in the construction of \textit{stretched fractals}:
\begin{align*}
\C:=\bigcup_{\substack{i,j\in \A \\ i\neq j}} F_i(F)\cap F_j(F)
\end{align*}
That means $\C$ are the points where $1$-cells meet. With this we define the so called post critical set $\P$.
\begin{align*}
\P:=\{x\in F \ | \ \exists w\in \A^\ast: F_w(x)\in \C  \}
\end{align*} 
The post critical set $\P$ consists of all points that get mapped to the critical set $\C$ by finite compositions of the similitudes $F_1,\ldots,F_N$. For p.c.f. self-similar sets we know that $\#\P<\infty$. For nested fractals $\P$ is made up of the essential fixed points (see \cite[Example 8.5]{kig93}). In general $\P$ can have elements that are no fixed points (see Hata's tree in chapter~\ref{hataex}). To still be able to stretch these fractals we need to make an assumption for $\P$. We only consider connected p.c.f. self-similar sets, such that
\begin{align}
\forall p \in \P \ \exists w\in \A^\ast_0 \text{ and a fixed point } q_i\in\P \text{ such that } p=F_w(q_i) \tag{C1}\label{pcfcond}\\
q_i\notin\C \ \forall i\in\A \tag{C2} \label{pcfcond2}
\end{align}
That means, each post critical point is the image of a fixed point under finite composition of the similitudes or one itself. This is obviously true for nested fractals and it is also true for Hata's tree which is not a nested fractal. The fixed points are not allowed to be critical points themselves.\\

We want to introduce a quantity that describes the \textit{level of connectedness} at $\C$. This value is called the \textit{multiplicity} of a point $c\in \C$ and it counts how many $1$-cells meet at $c$. We can define this value for all $x\in F$:
\begin{align*}
\rho(x):=\#\{i\in\A \ | \ x\in F_i(F)\}
\end{align*}
We can also count the $n$-cells that meet at $x$:
\begin{align*}
\rho_n(x):=\#\{w\in\A^n\ |\ x\in F_w(F)\}
\end{align*}
As it turns out this gives us the same value $\rho(c)=\rho_n(c)$ for $c\in \C$. This fact was proved by Lindstr\o m in \cite[Prop. IV.16]{lin90} for nested fractals but it only used the nesting property which is also true for all p.c.f. self-similar sets \cite{kig93}.

Now the critical set $\C$ with $\rho(c)$ for all $c\in \C$ describes how the fractal is connected. In particular we have $\rho(c)\geq 2$ for all $c\in\C$. Next we want to be able to say which post critical points get mapped to $c\in \C$.
\begin{align*}
\forall c\in\C \ : \ &\exists w^{c,1},\ldots, w^{c,\rho(c)}\in \A^\ast \text{ and fixed points } q^c_1,\ldots,q^c_{\rho(c)}\in\P \text{ such that}\\
&F_{w^{c,l}}(q^c_l)=c, \ \forall l\in\{1,\ldots,\rho(c)\} \\
&\text{where } w^{c,l}_1 \text{ are pairwise distinct.}
\end{align*}
We can do this since we consider p.c.f. self-similar sets that fulfill (\ref{pcfcond}). The first letters $w^{c,l}_1$ are different which indicates that $c$ belongs to $\rho(c)$ many different $1$-cells.\\

We can now define a new IFS $(G_1,\ldots,G_N)$ in the following way with $0<\alpha<1$:
\begin{align*}
G_i:=\alpha(F_i-q_i)+q_i
\end{align*}
This procedure lowers the contraction ratio of $F_i$ by multiplying it with $\alpha$ and it does this in a way that preserves the fixed point. It sort of compresses the image of $F_i$ linearly into its fixed point. The attractor of $(G_1,\ldots,G_N)$ will be denoted by $\Sigma_\alpha$.\\

By lowering the contraction ratios the copies get disconnected. I.e. the attractor of the new IFS becomes totally disconnected. The copies were connected at the critical set $\mathcal{C}$. We want to save the degree of connectedness by introducing one-dimensional lines connecting the copies.\\[.2cm]
$\forall c\in\mathcal{C}$ define
\begin{align*}
e_{c,l}:=\{\lambda G_{w^{c,l}}(q^c_l)+(1-\lambda)c, \ \lambda\in[0,1]\}, \ \forall l\in\{1,\ldots,\rho(c)\}
\end{align*}
$G_{w^{c,l}}(q^c_l)$ is the point that got stretched away from $c$. Due to (\ref{pcfcond2}) $e_{c,l}$ is a one-dimensional object for all $c,l$. For $w\in\A^\ast$ we denote $e^w_{c,l}:=G_w(e_{c,l})$. Now we can define the stretched fractal associated to the p.c.f. self-similar set $F$:
\begin{definition} The unique compact set $K_\alpha$ that fulfills the equation
\begin{align*}
K_\alpha =\bigcup_{i=1}^N G_i(K_\alpha) \cup \bigcup_{c\in \mathcal{C}}\bigcup_{l=1}^{\rho(c)} e_{c,l}
\end{align*}
is called the \textit{stretched fractal} associated to $F$.\\

\noindent The unique compact set $\Sigma_\alpha$ that fulfills
\begin{align*}
\Sigma_\alpha=\bigcup_{i=1}^N G_i(\Sigma_\alpha)
\end{align*}
is called the fractal part of $K_\alpha$.
\end{definition}
We can imagine the construction by fixing the points of $\mathcal{C}$ and stretching the copies away from each $c\in\mathcal{C}$ and then adding lines connecting the copies with $c$ like a spider's web. Since the fixed points of $G_i$ and $F_i$ are the same we ensure that $q^c_l$ and thus $G_{w^{c,l}}(q^c_l)$ are elements of $\Sigma_\alpha$. By this $K_\alpha$ is a connected set. Therefore, (C1) ensures connectedness. We will include a few examples of stretched fractals at the end of this chapter.\\

Solutions of equations like the one in this definition are already known. Barnsley denoted such a setting in \cite[Chapter 3.4]{bar06} by \textit{IFS with condensation} where $\bigcup_{c\in \mathcal{C}}\bigcup_{l=1}^{\rho(c)} e_{c,l}$ is called the condensation set. Since this is compact, so is the unique solution $K_\alpha$. In \cite{fra18} Jonathan Fraser called such a solution an \textit{inhomogeneous self-similar set} and calculated the box dimension. This is much harder than to calculate the Hausdorff dimension since the box dimension is not countably stable. In particular the lower box dimension is not even finitely stable. The result for the Hausdorff dimension with respect to the Euclidean metric is calculated very easily due to its countable stability. From \cite[Lemma 3.9]{sn08} we know with the so called \textit{orbital set} $\mathcal{O}$:
\begin{align*}
\mathcal{O}=\bigcup_{w\in\A^\ast_0}G_w\left(\bigcup_{c\in \mathcal{C}}\bigcup_{l=1}^{\rho(c)} e_{c,l}\right)
\end{align*}
that
\begin{align*}
K_\alpha=\Sigma_\alpha\cup \mathcal{O}=\overline{\mathcal{O}}
\end{align*}
\begin{proposition}\label{prop21}
\begin{align*}
\dim_{H,e}(K_\alpha)=\max\{\dim_{H,e}(\Sigma_\alpha),1\}
\end{align*}
\end{proposition}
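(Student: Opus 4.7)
The plan is to exploit the decomposition $K_\alpha=\Sigma_\alpha\cup\mathcal{O}$ (supplied just before the statement) together with the countable stability of the Euclidean Hausdorff dimension. Since $\dim_{H,e}$ is countably stable, for any countable family $\{A_n\}$ one has $\dim_{H,e}\bigl(\bigcup_n A_n\bigr)=\sup_n\dim_{H,e}(A_n)$. Applying this to the decomposition gives
\begin{align*}
\dim_{H,e}(K_\alpha)=\max\bigl\{\dim_{H,e}(\Sigma_\alpha),\,\dim_{H,e}(\mathcal{O})\bigr\},
\end{align*}
so the whole problem reduces to identifying $\dim_{H,e}(\mathcal{O})$.

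For the upper bound on $\dim_{H,e}(\mathcal{O})$, I would first observe that $\mathcal{C}$ is finite (this is part of the p.c.f.\ package, since $\mathcal{C}\subset\bigcup F_i(\mathcal{P})$ and $\#\mathcal{P}<\infty$) and $\rho(c)\leq N$, so the inner double union in the definition of $\mathcal{O}$ is a finite union of segments $e_{c,l}$. Since $\A^\ast_0$ is countable, $\mathcal{O}$ is a countable union of sets of the form $G_w(e_{c,l})$. Each $G_w$ is a similitude, so $G_w(e_{c,l})$ is again a line segment; by condition (\ref{pcfcond2}) (which guarantees $e_{c,l}$ is genuinely one-dimensional and not a single point) each such segment has Hausdorff dimension exactly $1$. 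Countable stability therefore yields $\dim_{H,e}(\mathcal{O})=1$.

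For the lower bound, the inclusions $\Sigma_\alpha\subset K_\alpha$ and $e_{c,l}\subset K_\alpha$ (for any fixed $c,l$) give immediately
\begin{align*}
\dim_{H,e}(K_\alpha)\geq\max\bigl\{\dim_{H,e}(\Sigma_\alpha),\,1\bigr\},
\end{align*}
using the monotonicity of Hausdorff dimension and the fact that $e_{c,l}$ is a nondegenerate segment. Combining with the upper bound from the previous paragraph closes the argument.

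There is no real obstacle here; the statement is essentially a soft consequence of countable stability once one has the decomposition $K_\alpha=\Sigma_\alpha\cup\mathcal{O}$ and the observation that $\mathcal{O}$ consists of countably many line segments. The only place where some care is needed is in verifying that the index set for the union defining $\mathcal{O}$ is genuinely countable, which reduces to the finiteness of $\mathcal{C}$ and of each $\rho(c)$, both of which are immediate from the p.c.f.\ setup. The harder content of the paper will be to pin down $\dim_{H,e}(\Sigma_\alpha)$ itself (and, more importantly, its analogue in the resistance metric) later on.
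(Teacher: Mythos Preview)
Your proposal is correct and matches the paper's approach exactly: the paper does not give a formal proof of this proposition but simply remarks that the result ``is calculated very easily due to its countable stability'' after recording the decomposition $K_\alpha=\Sigma_\alpha\cup\mathcal{O}$. You have spelled out precisely the details the paper leaves implicit.
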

This value strongly depends on the stretching parameter $\alpha$. The resistance forms, however, will only depend on the topology which does not depend on $\alpha$:
\begin{proposition}\label{prop22}
The $K_\alpha$ are pairwise homeomorphic for different $\alpha$.
\end{proposition}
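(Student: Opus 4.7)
The key insight is that the stretching parameter $\alpha$ affects only the geometry of $K_\alpha$, while the combinatorial structure describing how the fractal cells and the one-dimensional bridges are joined is invariant in $\alpha$. I would therefore construct an explicit homeomorphism $\Phi = \Phi_{\alpha,\beta}\colon K_\alpha \to K_\beta$ by pairing a common symbolic parametrization of the fractal parts with affine identifications of the individual bridges.

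As a preliminary I would verify that for every $\alpha \in (0,1)$ the sub-cells $G_i(\Sigma_\alpha)$, $i \in \A$, are pairwise disjoint. Condition (\ref{pcfcond2}) keeps the fixed points clear of $\C$, and a short computation using $G_i(x) = q_i + \alpha(F_i(x) - q_i)$ shows that two points previously identified at a critical point $c \in F_i(F) \cap F_j(F)$ now separate by a distance $(1-\alpha)\|q_i - q_j\|$. This produces an address map $\pi_\alpha \colon \A^{\mathbb{N}} \to \Sigma_\alpha$, $\pi_\alpha(w) = \bigcap_{n\geq 1} G_{w_1 \cdots w_n}(\Sigma_\alpha)$, which is a continuous bijection between compact Hausdorff spaces and hence a homeomorphism.

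Next I would define $\Phi$ piecewise. On $\Sigma_\alpha$ set $\Phi := \pi_\beta \circ \pi_\alpha^{-1}$; on each bridge $G_w(e_{c,l}) \subset \mathcal{O}$ let $\Phi$ be the unique affine map onto the corresponding bridge in $K_\beta$ matching the endpoints $G_w(c)$ and $G_{ww^{c,l}}(q^c_l)$ to their counterparts. Well-definedness on overlaps is automatic because the fractal endpoint lies in $\Sigma_\alpha$ with address $w\,w^{c,l}\,(l^\ast)^{\infty}$ (where $q^c_l = q_{l^\ast}$), and $\pi_\beta$ applied to this address returns exactly $G_{ww^{c,l}}(q^c_l) \in K_\beta$; at a junction $G_w(c)$ the $\rho(c)$ incident bridges map consistently. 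Directly from this definition one reads off the cell-structure identity $\Phi(G_w(K_\alpha)) = G_w(K_\beta)$ for every $w \in \A^\ast_0$, since addresses starting with $w$ go to addresses starting with $w$ and bridges $G_{wu}(e_{c,l})$ go to bridges $G_{wu}(e_{c,l})$.

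Continuity of $\Phi$ at any $x \in K_\alpha$ then follows from the cell-structure identity together with the fact that the diameters of $G_w(K_\alpha)$ and $G_w(K_\beta)$ both tend to zero as $|w| \to \infty$: a sequence $x_n \to x$ with $x \in \Sigma_\alpha$ eventually lies in cells $G_{w^{(n)}}(K_\alpha)$ shrinking around $x$, whose images $G_{w^{(n)}}(K_\beta)$ shrink around $\Phi(x)$, while on the interior or at junctions of bridges continuity is built into the affine piecewise construction and the finite degree of each junction. Swapping $\alpha$ and $\beta$ produces $\Phi^{-1}$, hence $\Phi$ is a homeomorphism. The main obstacle is the continuity check at a fractal endpoint $G_{ww^{c,l}}(q^c_l) \in \Sigma_\alpha$, where the approach can come along the attached bridge, through $\Sigma_\alpha$ via the address topology, or through infinitely many finer sub-bridges nested in ever smaller sub-cells; the cell-structure identity handles all three modes uniformly, but establishing it first requires the preliminary verification that no unexpected address collisions arise after stretching, which is precisely where conditions (\ref{pcfcond}) and (\ref{pcfcond2}) come in.
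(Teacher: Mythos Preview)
Your proposal is correct and follows essentially the same approach as the paper: identify the fractal parts $\Sigma_\alpha$ and $\Sigma_\beta$ via the coding map on $\A^{\mathbb{N}}$, then extend piecewise to the bridges $G_w(e_{c,l})$ by the obvious affine identifications. The paper's proof is in fact rather terse (it simply asserts that the extended map is a homeomorphism), whereas you supply the continuity argument at fractal endpoints via the cell-structure identity $\Phi(G_w(K_\alpha))=G_w(K_\beta)$, which is exactly the point that deserves care.
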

\begin{proof}
We denote by $G_w^\alpha$ the similitudes which correspond to $K_\alpha$ as well as $e_{c,l}^{\alpha,w}$ for $w\in\A^\ast_0$. We know that $\Sigma_\alpha$ is homeomorphic to $\A^\mathbb{N}$ by the coding map $\iota^\alpha$ which maps $\A^\mathbb{N}$ to $\Sigma_\alpha$ by
\begin{align*}
\iota^\alpha(w)=\bigcap_{n\geq 1} G^\alpha_{w_1,\ldots,w_n}(K_\alpha)
\end{align*}
For $\alpha_1,\alpha_2\in (0,1)$ with $\alpha_1\neq \alpha_2$ we thus know that $\Sigma_{\alpha_1}$ and $\Sigma_{\alpha_2}$ are homeomorphic by the homeomorphism
\begin{align*}
\varphi_{\alpha_1,\alpha_2}:=\iota_{\alpha_2}\circ (\iota_{\alpha_1})^{-1}
\end{align*}
Also we know that $e_{c,l}^{\alpha}$ is homeomorphic to $[0,1]$ for all $c\in \C $ and $l\in \{1,\ldots,\rho(c)\}$. We denote the homeomorphism by $\iota_{c,l}^\alpha$. We can extend $\varphi_{\alpha_1,\alpha_2}$ to all $e_{c,l}^{\alpha_1,w}$ with $w\in \A^\ast_0$ by
\begin{align*}
\varphi_{\alpha_1,\alpha_2}|_{e^{\alpha_1,w}_{c,l}}:=G_w^{\alpha_2}\circ \iota_{c,l}^{\alpha_2}\circ (\iota_{c,l}^{\alpha_1})^{-1}\circ (G_w^{\alpha_1})^{-1}
\end{align*}
We see that the extended $\varphi_{\alpha_1,\alpha_2}$ is a homeomorphism between $K_{\alpha_1}$ and $K_{\alpha_2}$.
\end{proof}
We therefore omit the parameter $\alpha$ in the notation and only write $K$ for the stretched fractal. Similar we only write $\Sigma$ for $\Sigma_\alpha$. This also means that the Hausdorff dimension with respect to the Euclidean metric is not a very good quantity to describe the analysis of $K$. In chapter~\ref{chap_cond_haus} we are going to calculate the Hausdorff dimension with respect to the resistance metric, which is much better suited for this job.\\

We give some further notation. For $w\in \A^\ast_0$ 
\begin{itemize}
\item $K_w:=G_w(K)$,  $K_n:=\bigcup_{w\in\mathcal{A}^n}K_w$
\item $J_n:=\overline{K\backslash K_n}$ 
\item $\Sigma_w:=G_w(\Sigma)$, $\Sigma_n:=\bigcup_{w\in\mathcal{A}^n}\Sigma_w$
\end{itemize}
We take the closure of $K\backslash K_n$ when defining $J_n$ to include the endpoints of the one-dimensional lines. 
$K_w$ is called an $n$-cell if $|w|=n$.

\subsection{Examples}
In this section we include some examples of p.c.f. self-similar fractals that we can stretch.
\subsubsection{Stretched Sierpinski Gasket}
The Stretched Sierpinski Gasket was subject of prior work. It was analyzed geometrically in \cite{af12} and analytically in \cite{af13} and \cite{akt16}. In \cite{afk17} the authors introduced so called completely symmetric resistance forms which satisfy full symmetry of the set. In \cite{haus17} the leading term of the eigenvalue counting function of the associated operators was calculated.

\begin{figure}[H]
\centering
\includegraphics[width=0.5\textwidth]{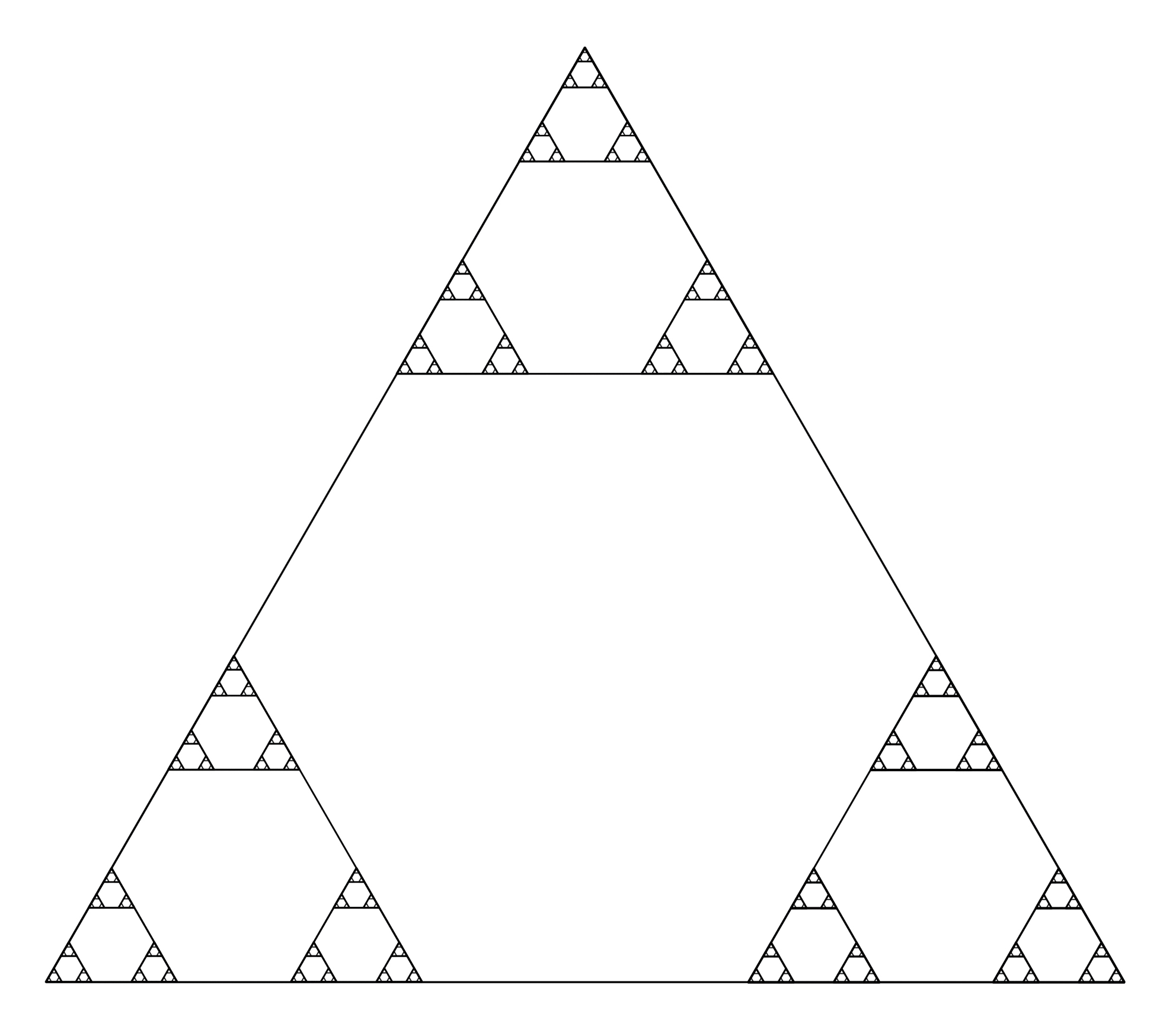}
\caption{Stretched Sierpinski Gasket}
\end{figure}
The Sierpinski Gasket has three similitudes and three critical points which all have multiplicity 2. The post critical set consists of all three fixed points of the similitudes which are the corner points of the big triangle (see \cite[Example 8.2]{kig93}). The words $w^{c,l}$ all have length one since the Sierpinski Gasket is nested.

\subsubsection{Stretched Level 3 Sierpinski Gasket}

\begin{figure}[h]
\centering
\includegraphics[width=0.5\textwidth]{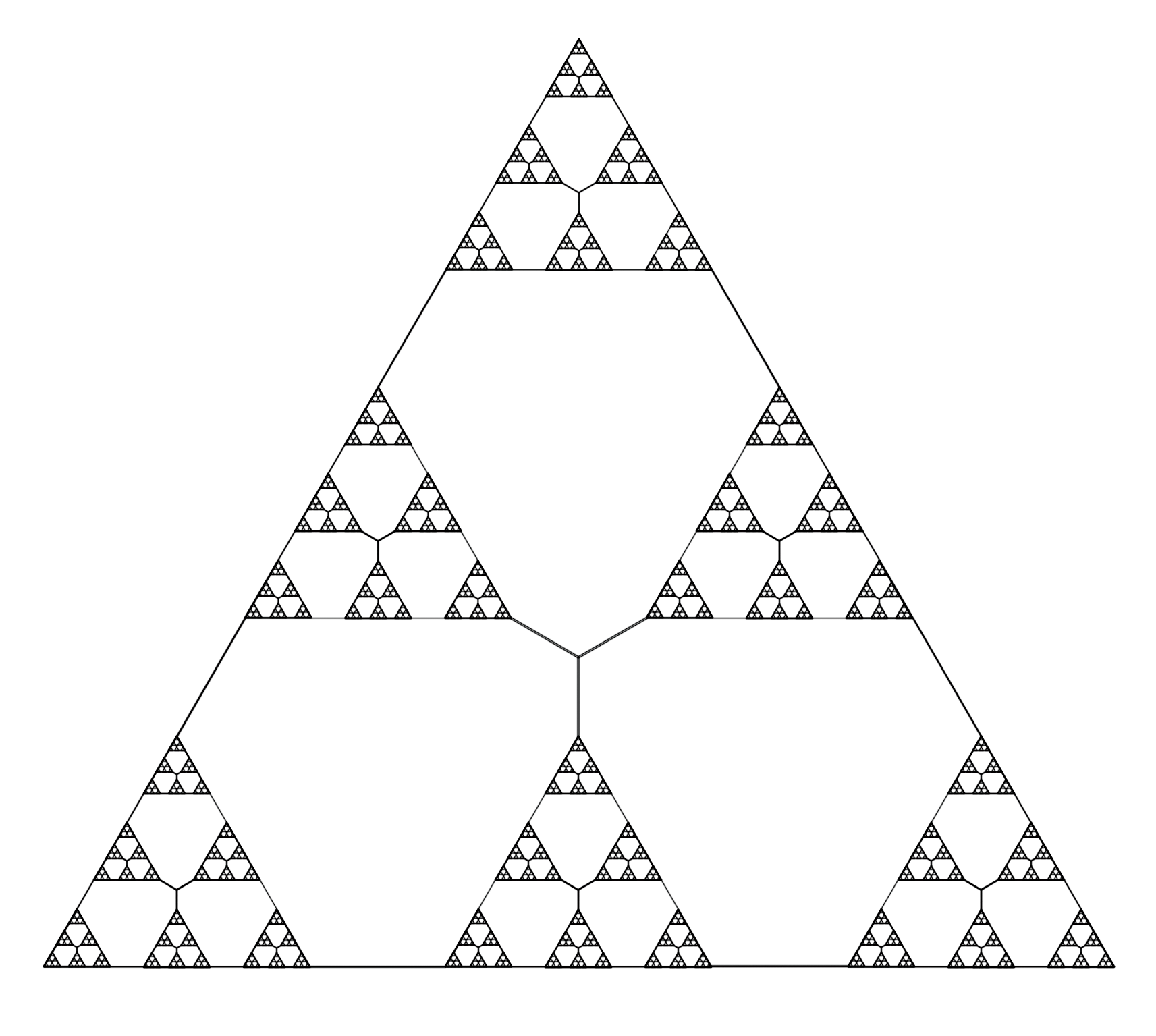}
\caption{Stretched Level 3 Sierpinski Gasket}
\end{figure}
The Level 3 Sierpinski Gasket has six similitudes and seven critical points. Six of the critical points have multiplicity 2, the inner critical point, however, has multiplicity 3. By connecting the copies of all three 1-cells that got stretched away from this point to it, we keep the level of connectedness. The post critical set consists of the essential fixed points which are again the corner points of the outer triangle.


\subsubsection{Stretched Sierpinski Gasket in higher dimensions}
There is a generalization of the Sierpinski Gasket to higher dimensions \cite{dh77}. These are nested fractals that can be stretched. 
\begin{figure}[H]
\centering
\includegraphics[width=0.5\textwidth]{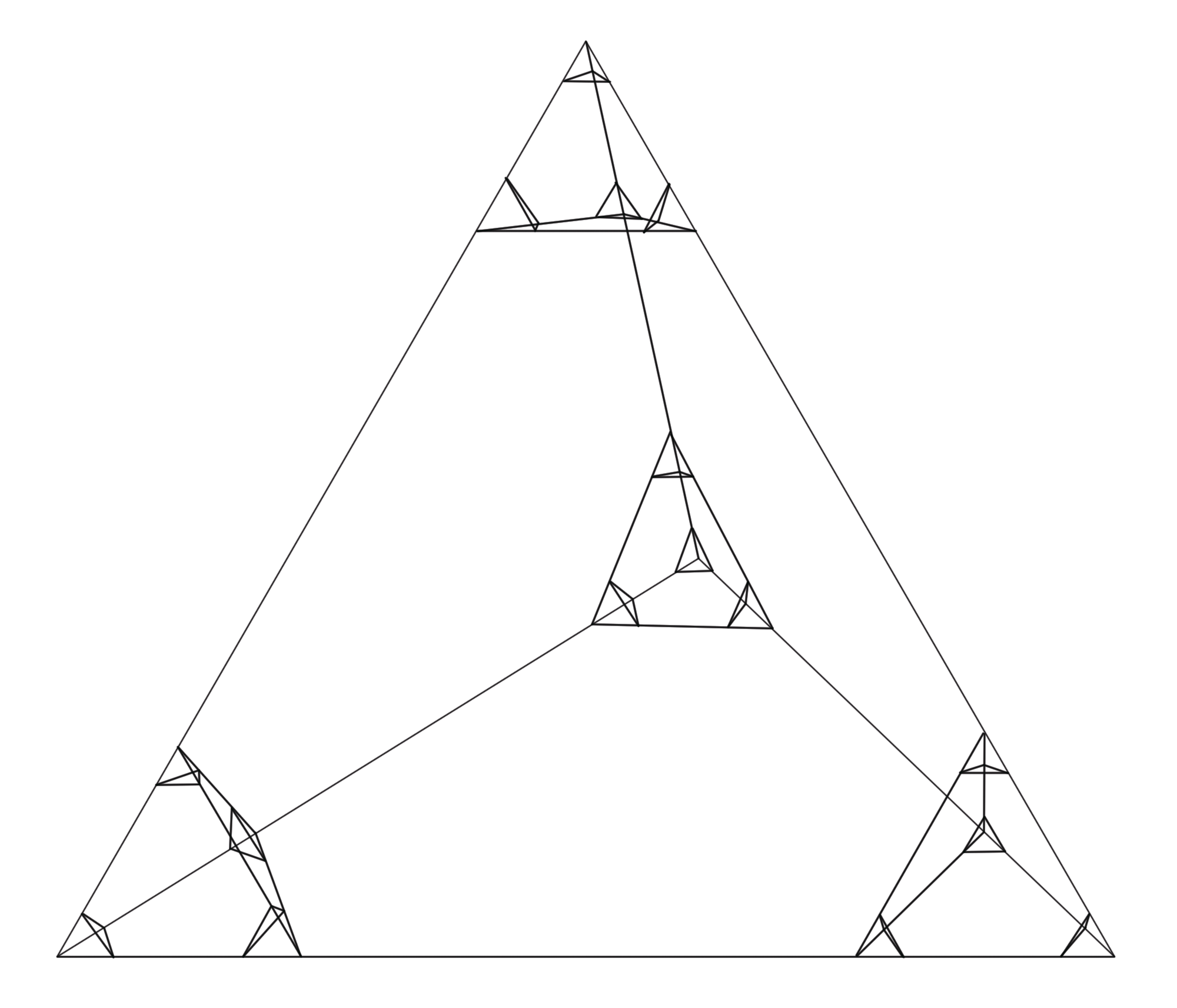}
\caption{Stretched Sierpinski Gasket in $\protect\mathbb{R}^3$}
\end{figure}
In $\mathbb{R}^d$ we have $d+1$ similitudes, where each copy of the stretched fractal is connected to all other copies by connecting lines over the critical points. We have $\frac{d(d+1)}2$ many critical points which all have multiplicity 2. The post critical points are all the fixed points of the similitudes. 

\subsubsection{Stretched Lindstr\o m Snowflake}
\begin{figure}[H]
\centering
\includegraphics[width=0.5\textwidth]{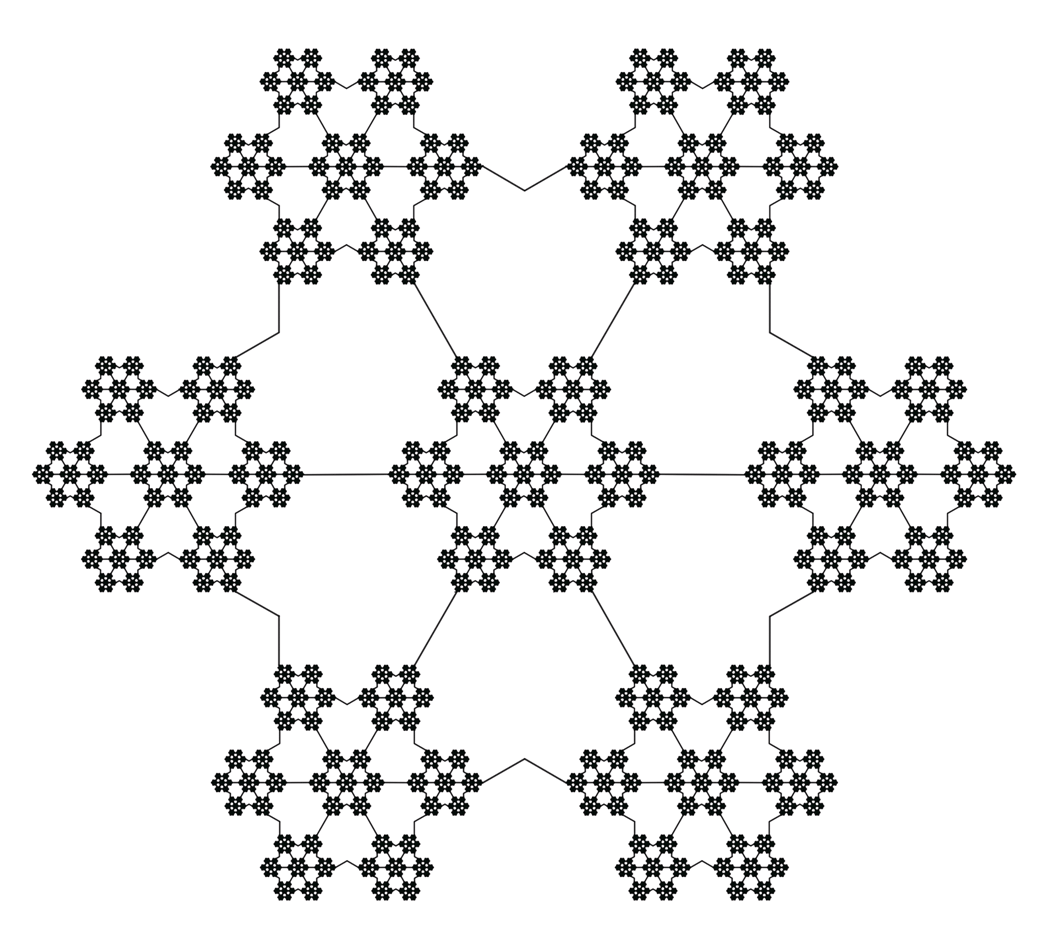}
\caption{Stretched Lindstr\o m Snowflake}
\end{figure}

The Lindstr\o m Snowflake was introduced by Lindstr\o m in \cite{lin90} as an example for nested fractals. It has seven similitudes and 12 critical points which all have multiplicity 2. The post critical set consists of the essential fixed points which are the fixed points of the outer six similitudes.

\subsubsection{Stretched Vicsek Set}

\begin{figure}[H]
\centering
\includegraphics[width=0.5\textwidth]{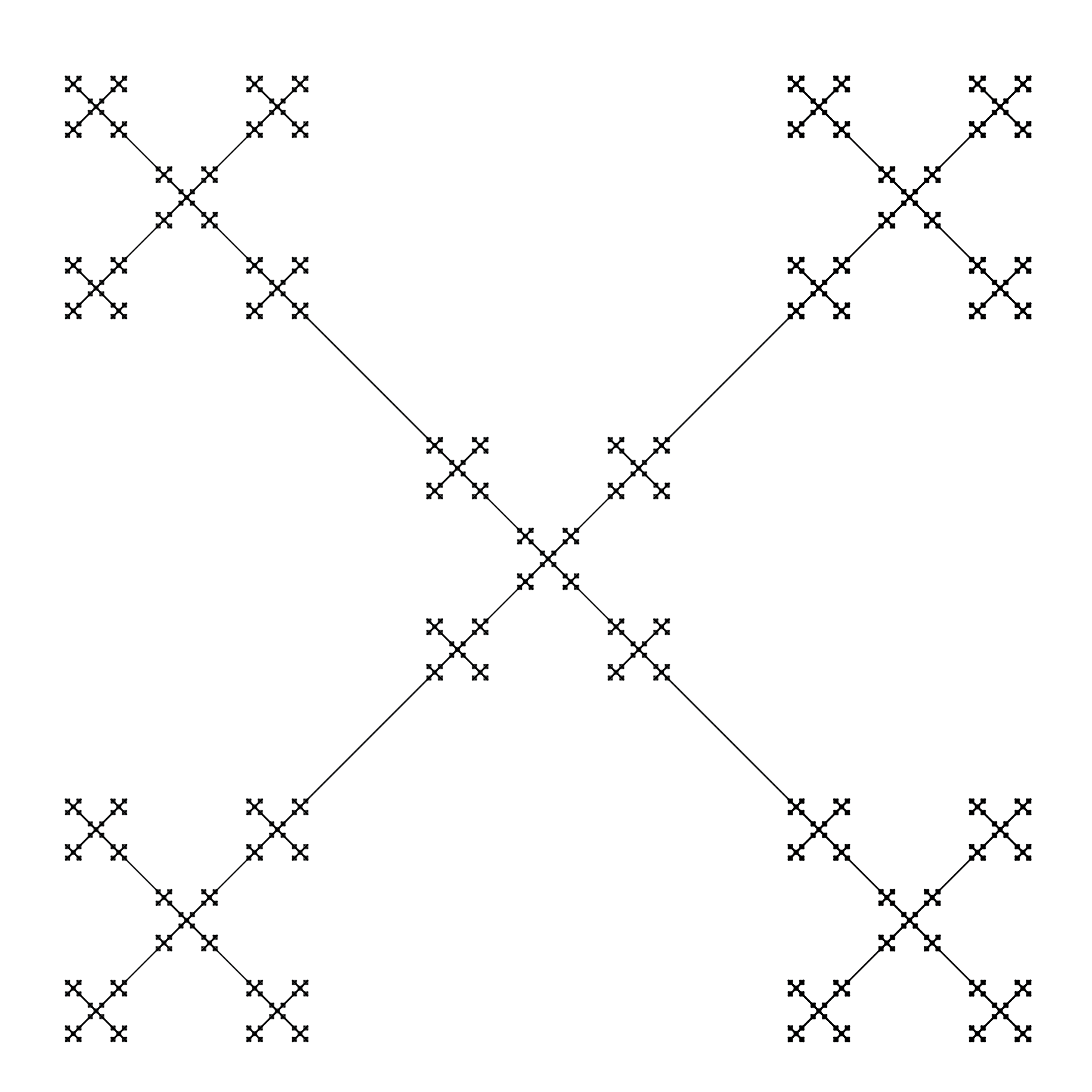}
\caption{Stretched Vicsek Set}
\end{figure}
The Vicsek Set consists of five similitudes and four critical points with multiplicity 2. The post critical points are the fixed points of the outer four similitudes.

\subsubsection{Stretched Hata's tree}\label{hataex}
We have the following similitudes
\begin{align*}
F_1(x)&=\frac 1{\sqrt{12}}\begin{pmatrix}\sqrt{3} &1\\1&-\sqrt{3}\end{pmatrix}\begin{pmatrix}x_1\\x_2\end{pmatrix} \\
F_2(x)&=\frac 23\begin{pmatrix}1&0\\ 0&-1\end{pmatrix}\begin{pmatrix}x_1\\ x_2\end{pmatrix}+\begin{pmatrix}\frac 13\\ 0\end{pmatrix}
\end{align*}
This corresponds to the case $\alpha=\frac 12 +\frac{\sqrt{3}}6 i$ in \cite{hata85}. 
\begin{figure}[H]
\centering
\includegraphics[width=0.7\textwidth]{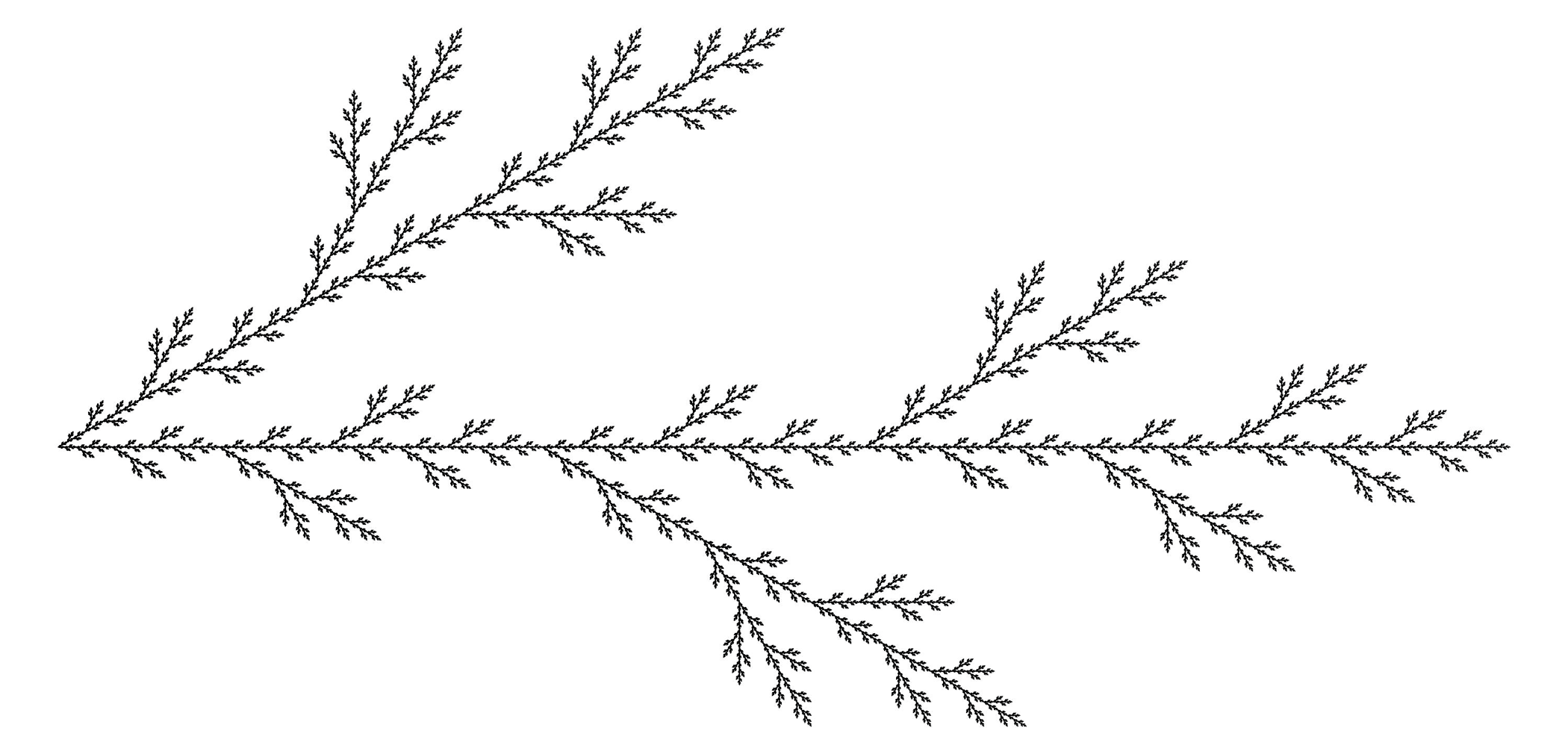}
\caption{Hata's tree}
\end{figure}
There is one critical point at $c=\begin{pmatrix}\frac 13\\0\end{pmatrix}$ with multiplicity 2 and the post critical set is 
\begin{align*}
\P:=\left\{\begin{pmatrix}0\\0 \end{pmatrix}, \begin{pmatrix}1\\0 \end{pmatrix}, \begin{pmatrix} 1/2 \\ 1/\sqrt{12}  \end{pmatrix}\right\}
\end{align*}
We have $\begin{pmatrix} 1/2 \\ 1/\sqrt{12}  \end{pmatrix}=F_1\begin{pmatrix}1\\0 \end{pmatrix}$
and $c=F_1\begin{pmatrix} 1/2 \\ 1/\sqrt{12}  \end{pmatrix}=F_2\begin{pmatrix}0\\0\end{pmatrix}$ which means we have the words $w^{c,1}=2$ and $w^{c,2}=11$. Therefore, even though Hata's tree is not nested (since it lacks the symmetry axiom) it fulfills our conditions (\ref{pcfcond}) and (\ref{pcfcond2}) and thus we are able to stretch it.\\

Stretching this set with $\alpha=\frac 9{10}$ gives us the following two similitudes
\begin{align*}
G_1(x)&=\frac 9{10\sqrt{12}}\begin{pmatrix}\sqrt{3} &1\\1&-\sqrt{3}\end{pmatrix}\begin{pmatrix}x_1\\x_2\end{pmatrix} \\
G_2(x)&=\frac 35\begin{pmatrix}1&0\\ 0&-1\end{pmatrix}\begin{pmatrix}x_1\\ x_2\end{pmatrix}+\begin{pmatrix}\frac 25\\ 0\end{pmatrix}
\end{align*}
According to the construction we need to connect the points $G_1^2\begin{pmatrix}1\\0 \end{pmatrix}$ and $G_2\begin{pmatrix}0\\0\end{pmatrix}$ with $c$. This leads to the connecting lines
\begin{align*}
e_1=\{\lambda(\tfrac 9{10})^2\begin{pmatrix}\frac 13\\0\end{pmatrix}+(1-\lambda)\begin{pmatrix}\frac 13\\0\end{pmatrix}, \lambda\in(0,1)\}\\
e_2=\{\lambda \begin{pmatrix}\frac 13\\0\end{pmatrix} +(1-\lambda) \begin{pmatrix}\frac 25\\0\end{pmatrix}, \lambda\in(0,1)\}
\end{align*}
\begin{figure}[H]
\centering
\includegraphics[width=0.7\textwidth]{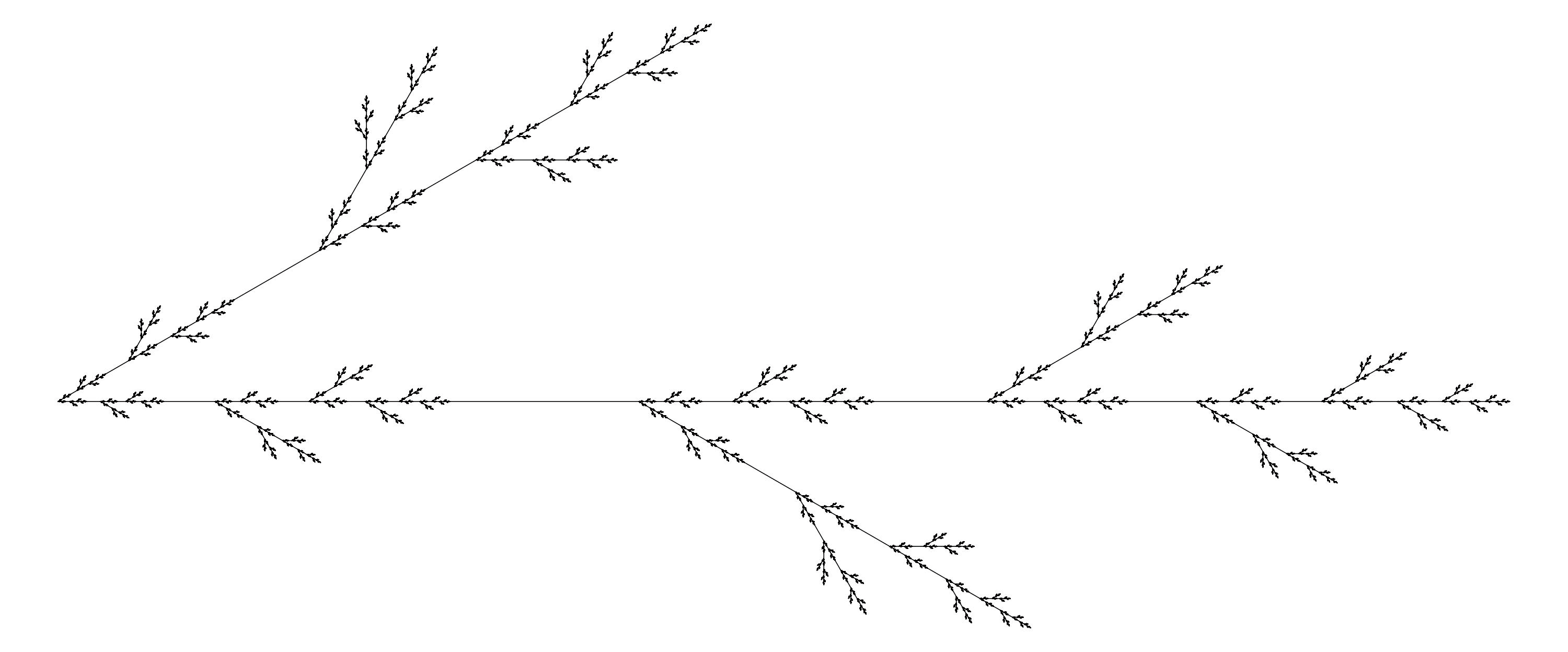}
\caption{Stretched Hata's tree}
\end{figure}

\section{Graph approximation and harmonic structures}\label{chapter3}
To be able to introduce Dirichlet forms on stretched fractals we need to approximate $K$ by a sequence of finite graphs and choose resistances on the graph edges. This is the goal of this chapter.
\subsection{Graph approximation}
We start with the post critical set as vertices and connect all of them pairwise.
\begin{align*}
V_0:=\mathcal{P}, \ E_0:=\{\{x,y\}\ | \ x,y\in V_0, \ x\neq y\}
\end{align*}
In the next graph we have two kinds of vertices. One originate by applying the similitudes $G_j$ to the points of $V_0$:
\begin{align*}
P_1:=\bigcup_{j=1}^N G_j(V_0)
\end{align*}
The other kind are the critical points describing how we want to connect the cells $G_j(V_0)$:
\begin{align*}
C_1:=\mathcal{C}
\end{align*}
The union of these two parts gives us the set of vertices $V_1:=P_1\cup C_1$. \\

Now we want to describe how these vertices are connected. The points of $G_j(V_0)$ should be connected in the same way as $V_0$ was. This gives us the edge relation on $P_1$:
\begin{align*}
E_1^\Sigma :=\{\{G_ix,G_iy\}\ | \ \{x,y\}\in E_0, \ i\in\A\}
\end{align*}
The points that got stretched away from points in $\mathcal{C}$ should again be connected with these points to reflect the geometry of $K$.
\begin{align*}
E_1^I:=E_{1,1}^I:=\{\{c,G_{w^{c,l}}(q^c_l)\}\ | \ c\in\mathcal{C}, l\in\{1,\ldots,\rho(c)\}\}
\end{align*}
We know that $G_{w^{c,l}}(q^c_l)$ is an element of $P_1$. This gives us the graph $(V_1,E_1)$ with vertices $V_1=P_1\cup C_1$ and edge set $E_1:=E_1^\Sigma\cup E_1^I$.

\begin{figure}[H]
\centering
\includegraphics[width=.6\textwidth]{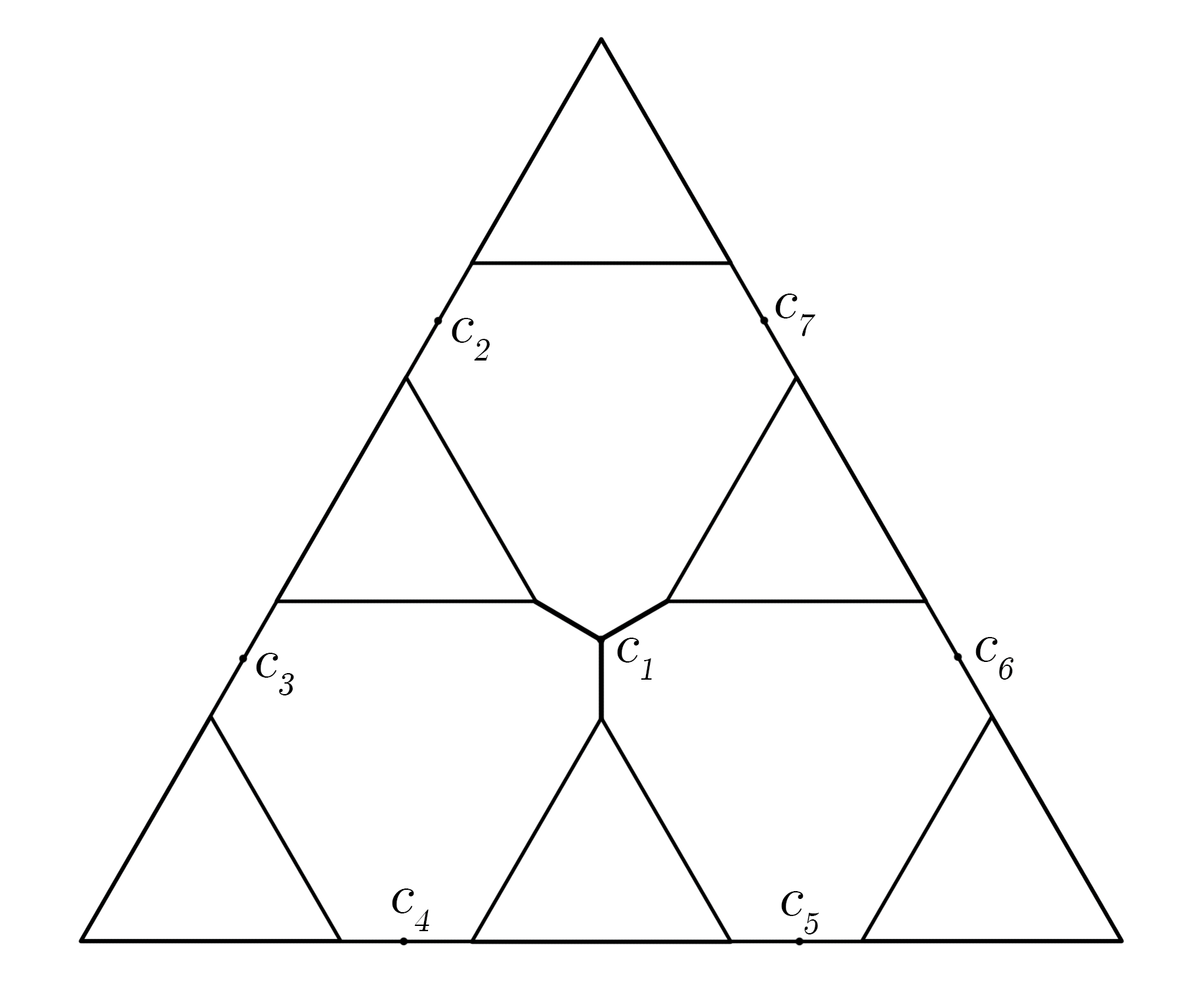}
\caption{$\protect(V_1,E_1)$ for the Stretched Level 3 Sierpinski Gasket}
\end{figure}

We are now ready to define the whole sequence of graphs. In general the vertices will consist of two different kinds of points.
\begin{align*}
P_n&:=\bigcup_{w\in\mathcal{A}^n}G_w (V_0)\\
C_{k,k}&:=\bigcup_{w\in\mathcal{A}^{k-1}}G_w (\mathcal{C})\\
C_n&:=\bigcup_{k=1}^n C_{k,k}\\
\Rightarrow V_n&:=P_n\cup C_n
\end{align*}
Similar we define the edge set.
\begin{align*}
E_n^\Sigma&:=\{\{G_wx,G_wy\}\ | \ \{x,y\}\in E_0, \ w\in\mathcal{A}^n\}\\
E_{k,k}^I&:=\{\{G_wx,G_wy\}\ | \ \{x,y\}\in E_{1,1}^I, \ w\in\mathcal{A}^{k-1}\}\\
E_n^I&:=\bigcup_{k=1}^n E_{k,k}^I\\
\Rightarrow E_n&:=E_n^\Sigma\cup E_n^I
\end{align*}
We will call the edges in $E_n^I$ connecting edges and the ones in $E_n^\Sigma$ fractal edges.\\

This leads to a sequence of graphs $\Gamma_n:=(V_n,E_n)$. We introduce some notation:
\begin{align*}
V_\ast:=\bigcup_{n\geq 0} V_n\\
P_\ast:=\bigcup_{n\geq 1} P_n\\
C_\ast:=\bigcup_{n\geq 1} C_n
\end{align*}
We know from general theory that $P_\ast $ is dense in $\Sigma$.
\begin{align*}
\Rightarrow \overline{V}_\ast=\Sigma\cup C_\ast
\end{align*}
This can be seen with \cite[Lemma 3.9]{sn08} if we choose $\C$ as the condensation set or inhomogeneity.
\subsection{Harmonic structures}
Until now we have the approximating graphs. We need resistances on the edges to define quadratic forms and thus operators. Define resistance functions
\begin{align*}
r_n:E_n\rightarrow [0,\infty]
\end{align*}
that assign each edge in $E_n$ a resistance.\\[.2cm]
We want to choose resistances on $E_n$ in such a way that the electrical networks $(V_n,E_n,r_n)$ are all equivalent and thus a compatible sequence (compare \cite[Def. 2.5]{kig03}). Similar to the self-similar case it suffices to have the existence of $r_0$ and $r_1$ such that $(V_0,E_0,r_0)$ and $(V_1,E_1,r_1)$ are equivalent. Such values (or functions) will be called a harmonic structure in analogy to the self-similar case (compare \cite[Def. 9.5]{kig03}). \\

For an edge $e=\{x,y\}$ we write $G_i(e):=\{G_i(x),G_i(y)\}$. Choose values
\begin{align*}
r_e:=r_0(e)\in(0,\infty], \ \forall e \in E_0\\
\rho_e:=r_1(e)\in(0,\infty) , \ \forall e\in E_1^I
\end{align*}
and $0<\lambda<1$. Then define $r_1$ on the remaining edges in $E^\Sigma_1$ as follows:
\begin{align*}
r_1(G_i(e)):=\lambda r_0(e), \ \forall e \in E_0, i\in \A
\end{align*}
With this we have chosen all values for $r_0$ and $r_1$. Since we allow that $r_0(e)=\infty$ we need to make sure that the network is connected.

 \begin{definition} Let $(V,E)$ be a finite graph and $r: E\rightarrow [0,\infty]$. \\We call an electrical network $(V,E,r)$ connected if for all $p,\tilde p\in V$ there exist $\{p_0,p_1\},\ldots,\{p_{n-1},p_n\}\in E$ with $p_0=p$ and $p_n=\tilde p$ such that $r(\{p_i,p_{i+1}\})<\infty$ for all $i\in\{0,\ldots,n-1\}$.\\
\end{definition}

If the electrical networks $(V_0,E_0,r_0)$ and $(V_1,E_1,r_1)$ are equivalent and the network $(V_0,E_0,r_0)$ is connected we call $(r_0,\lambda,\{\rho_e\}_{e\in E_1^I})$ a harmonic structure for $(G_1,\ldots,G_N)$. \\[.2cm]
\textit{Electrically equivalent} can also be expressed in terms of quadratic forms. We also write $r_0(x,y):=r_0(\{x,y\})$.
\begin{align*}
E_0(f):=\sum_{\{x,y\}\in E_0} \frac 1{r_0(x,y)}(f(x)-f(y))^2\\
E_1(f):=\sum_{\{x,y\}\in E_1} \frac 1{r_1(x,y)}(f(x)-f(y))^2
\end{align*}
with $r_0$ and $r_1$ chosen like before. The trace of $E_1(\cdot)$ on $V_0$ is
\begin{align*}
E_1|_{V_0}(g)=\inf\{E_1(f)\ |  \ f:V_1\rightarrow \mathbb{R}, \ f|_{V_0}=g\}
\end{align*}
We can now give a definition of harmonic structures.

\begin{definition}[Harmonic structure]
$(r_0,\lambda,\{\rho_e\}_{e\in E_1^I})$ is a harmonic structure on~$K$ if and only if
\begin{enumerate}
\item $(V_0,E_0,r_0)$ is connected
\item $E_1|_{V_0}(g)=E_0(g)\quad \text{for all}\quad g:V_0\rightarrow \mathbb{R}$
\end{enumerate}
\end{definition}
For fixed $r_0$ we cannot expect that $\lambda$ and $\{\rho_e\}_{e\in E_1^I}$ are unique. In fact, this is a major feature of stretched fractals (see \cite{afk17} or chapter~\ref{chapexamples}).\\

For the Stretched Level 3 Sierpinski Gasket you can see the resistances in the following figure.\\

\begin{figure}[H]
\centering
\includegraphics[width=1\textwidth]{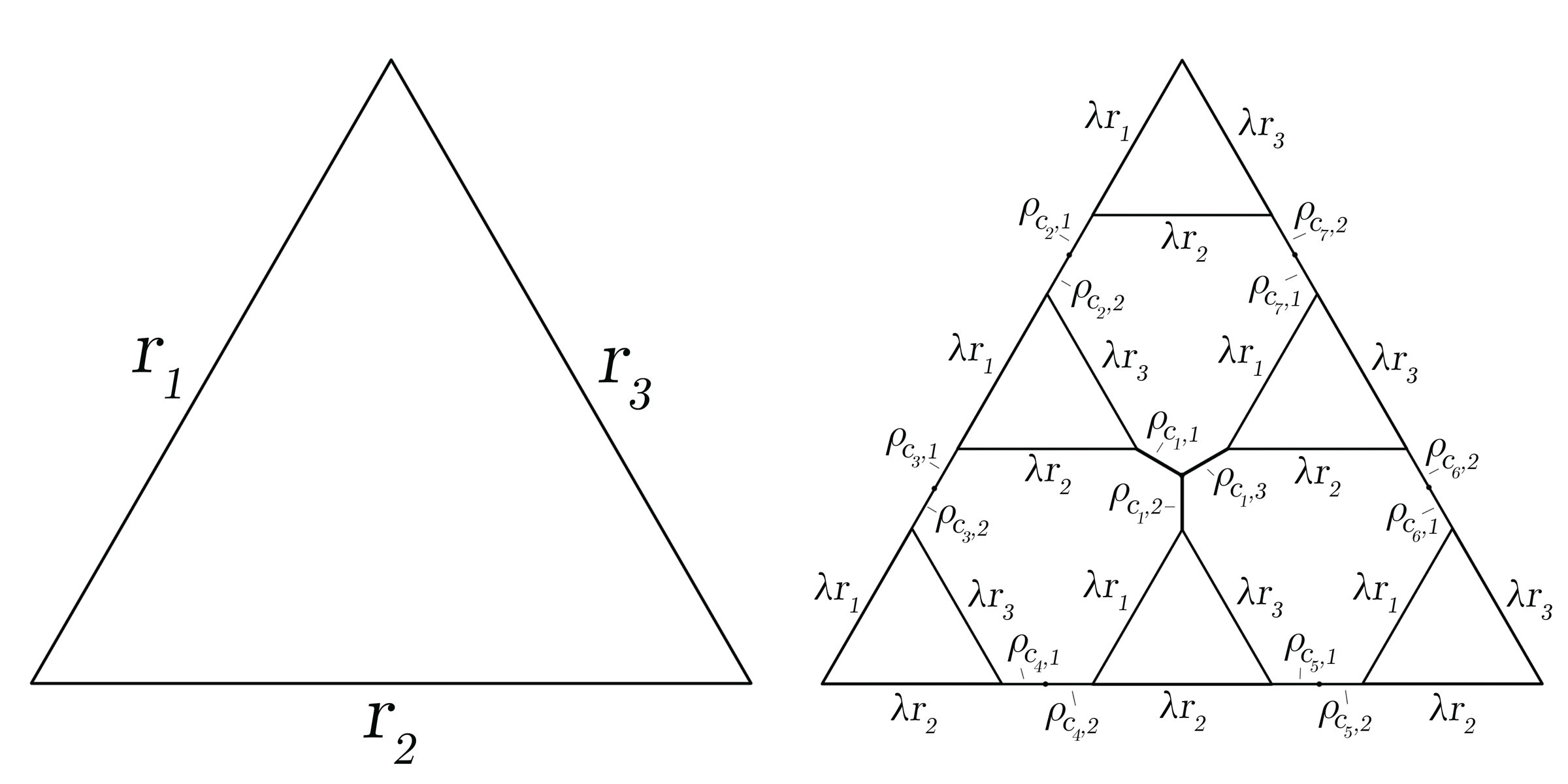}
\caption{Resistances on $\protect(V_0,E_0)$ and $\protect(V_1,E_1)$}
\end{figure}

In the next graph approximation the electrical network $(V_2,E_2,r_2)$ has to be equivalent to $(V_1,E_1,r_1)$ and thus to $(V_0,E_0,r_0)$. The edges in $E_1^I$ are still part of $E_2$ and are not transformed in any way, so they will have the same resistance as in $(V_1,E_1,r_1)$.
The cells $G_iV_0$ get divided in the same fashion as $V_0$ was in the first step but the resistances are now scaled by $\lambda$ compared to the values on $(V_0,E_0,r_0)$. We can therefore choose another \textit{harmonic structure} to get electrically equivalent networks. We choose the same resistances for all $1$-cells.\\

For the Stretched Level 3 Sierpinski Gasket you can see the second graph approximation in the following picture. The dotted lines indicate that the problem of choosing resistances is exactly the same as before in the first graph approximation.

\begin{figure}[H]
\centering
\includegraphics[width=\textwidth]{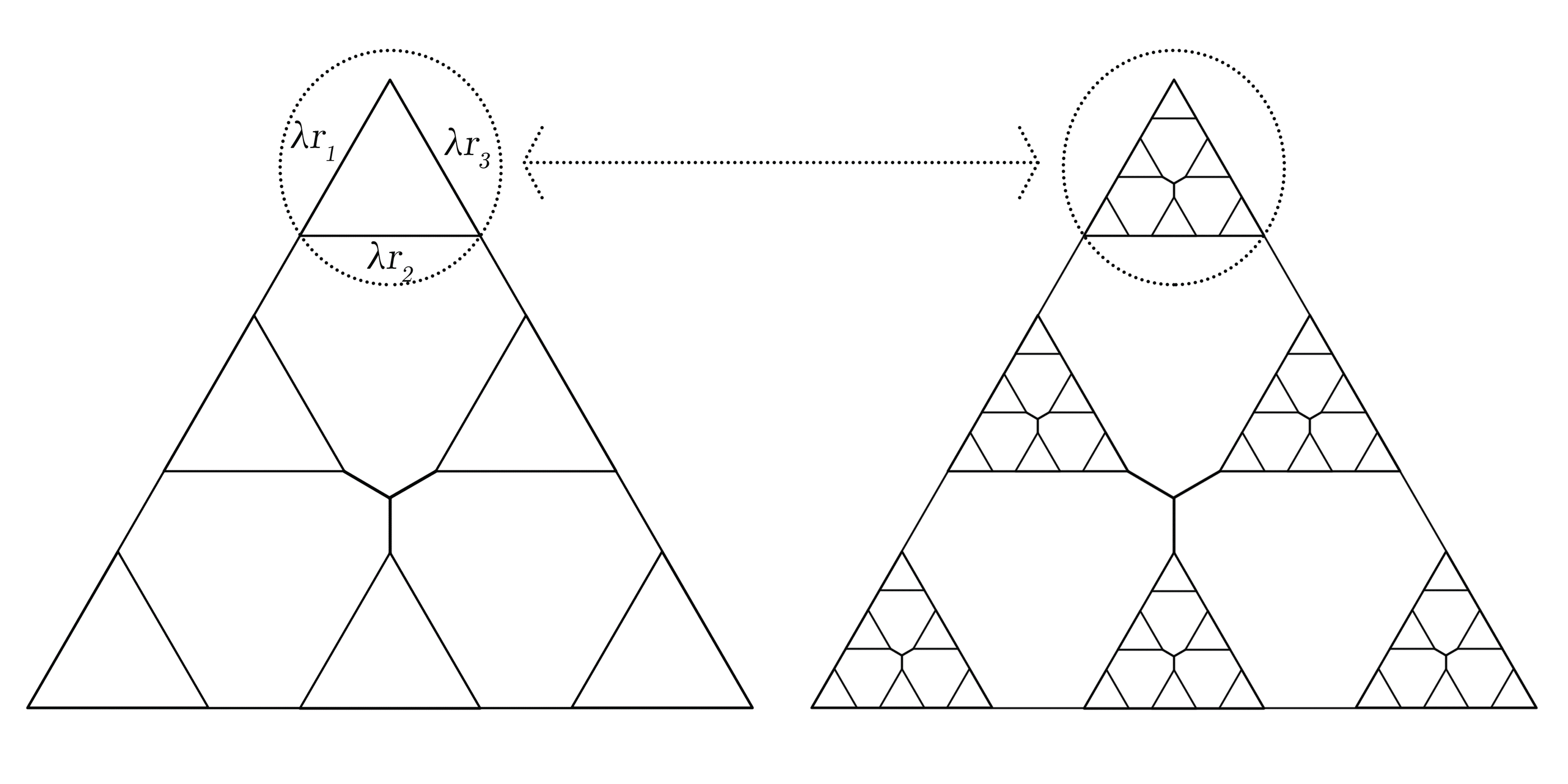}
\caption{Resistances on second graph approximation}
\end{figure}

 We can follow this procedure in each step and thus we have to choose a sequence of harmonic structures:
\begin{align*}
\mathcal{R}:=(r_0,\lambda_i,\{\rho_e^i\}_{e\in E_1^I})_{i\geq 1}
\end{align*}
such that $(r_0,\lambda_i,\{\rho_e^i\}_{e\in E_1^I})$ is a harmonic structure for all $i$. 
Notice that $r_0$ has to be the same for all harmonic structures.\\ 

With this sequence we can define the values for $r_n$:
\begin{enumerate}
\item $r_n$ on $E_n^\Sigma$
\begin{align*}
r_n(G_we):=\lambda_1\cdots\lambda_n r_0(e), \\ e\in E_0, w\in\mathcal{A}^n
\end{align*}
\item $r_n$ on $E_n^I$
\begin{align*}
r_1(e)&=\rho^1_e, \ e\in E^I_1\\[.4cm] 
r_n(G_we)&=\lambda_1\cdots \lambda_{|w|}\rho_e^{|w|+1}, \\
 w&\in \bigcup_{k=1}^{n-1} \mathcal{A}^k, \ e\in E_1^I, \ n\geq 2
\end{align*}
\end{enumerate}
By the definition of harmonic structures $(V_n,E_n,r_n)_{n\geq 0}$ is a sequence of equivalent electrical networks.\\[.2cm]
Since $r_0$ is fixed for the whole sequence of harmonic structures we omit it in the notation of $\R$ whenever we don't explicitly need it. Additionally for the sake of notation we denote $\boldsymbol{\rho}^i:=\{\rho_e^i\}_{e\in E_1^I}$.
\begin{definition}[Regular sequence of harmonic structures]
Let $\R=(\lambda_i,\boldsymbol{\rho}^i)_{i\geq 1}$ be a sequence of harmonic structures (with fixed $r_0$). We call $\R$ a regular sequence of harmonic structures if it fulfills the following two conditions:
\begin{enumerate}[(1)]
\item $\exists \lambda^\ast<1$ such that $\lambda_i\leq \lambda^\ast$ for all $i$
\item $\rho^\ast:=\sup\{\rho\ | \ \rho \in\boldsymbol{\rho}^i, \ i\geq 1\}<\infty$
\end{enumerate}
\end{definition}
The condition $(1)$ is an immediate generalization of regular harmonic structures from \cite[Def. 9.5]{kig03}. The condition $(2)$ is a technical condition that we need to show the existence of resistance forms on $K$.

\subsection{Examples}\label{chapexamples}

We only have to consider the graphs $(V_0,E_0)$ and $(V_1,E_1)$ and choose resistances on the edges in accordance to this chapter such that the electrical networks are equivalent.
\subsubsection{Stretched Sierpinski Gasket}
This has been handled in \cite{afk17}. However, the edge set $E^I_1$ was slightly different since the copies got connected by only one end-to-end edge. 
\begin{figure}[H]
\centering
\includegraphics[width=\textwidth]{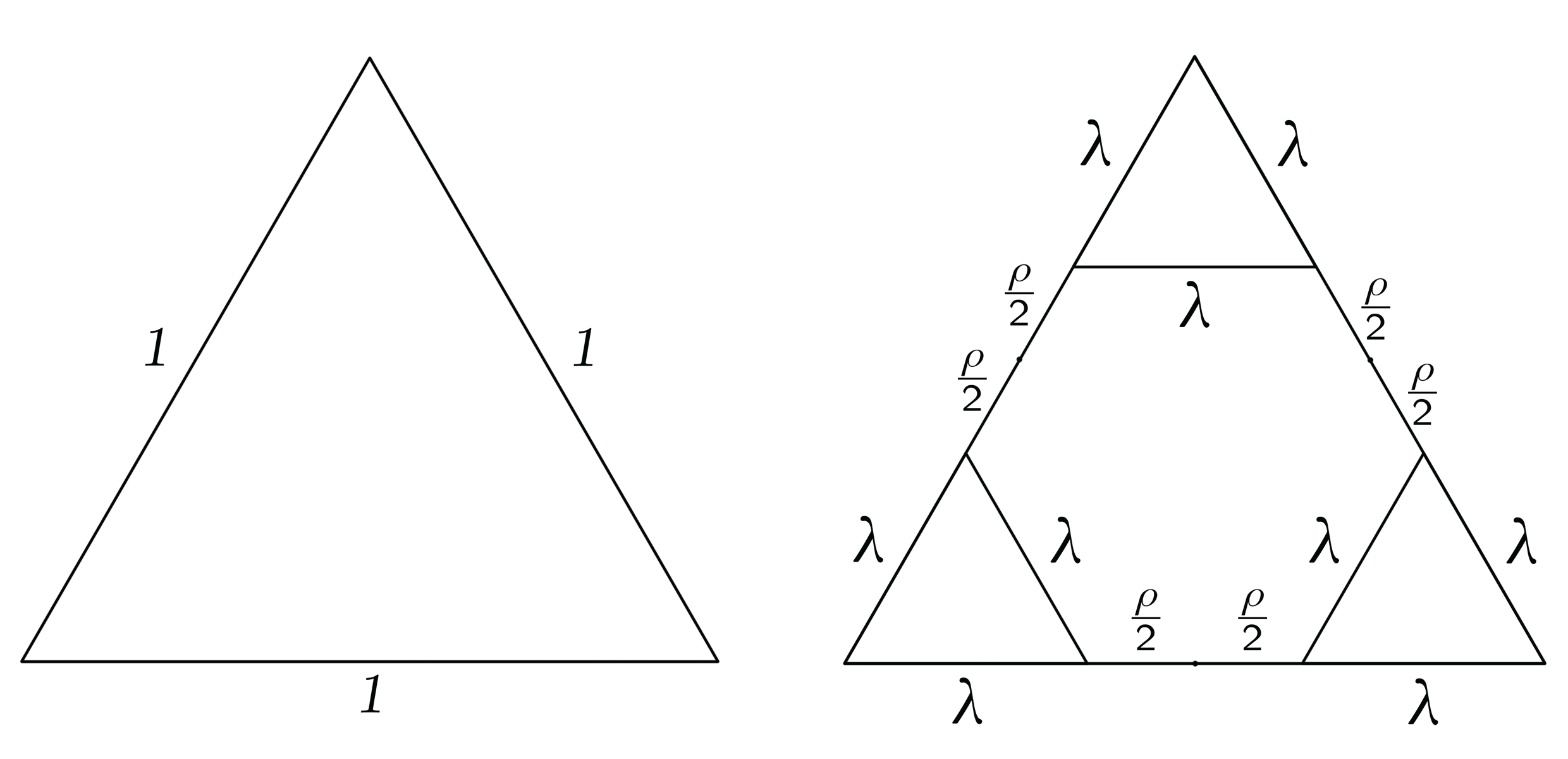}
\caption{Harmonic structure on Stretched Sierpinski Gasket}
\label{harm_ssg}
\end{figure}
Let us choose the resistances as in Figure~\ref{harm_ssg}. That means $r_0\equiv 1$. With this choice we are in the framework of \cite{afk17}. From this work we know that
\begin{align}
\frac 53\lambda+\rho=1 \label{ssgharmeq}
\end{align}
This can be seen by a quick calculation with the help of the $\Delta$-Y transformation. All sequences $\R=(\lambda_i,\boldsymbol{\rho}^i)_{i\geq 1}$ with $\boldsymbol{\rho}^i\equiv \frac {\rho_i} 2$ and $\tfrac 53\lambda_i+\rho_i=1$ for all $i$ are regular sequences of harmonic structures. From (\ref{ssgharmeq}) we know that $0<\lambda_i< \frac 35$ where the upper bound $\tfrac 35$ is exactly the renormalization factor in the self-similar case \cite[Example 8.2]{kig93}.
\subsubsection{Stretched Level 3 Sierpinski Gasket}
We choose the resistances on $(V_0,E_0)$ and $(V_1,E_1)$ in the following way.
\begin{figure}[H]
\centering
\includegraphics[width=\textwidth]{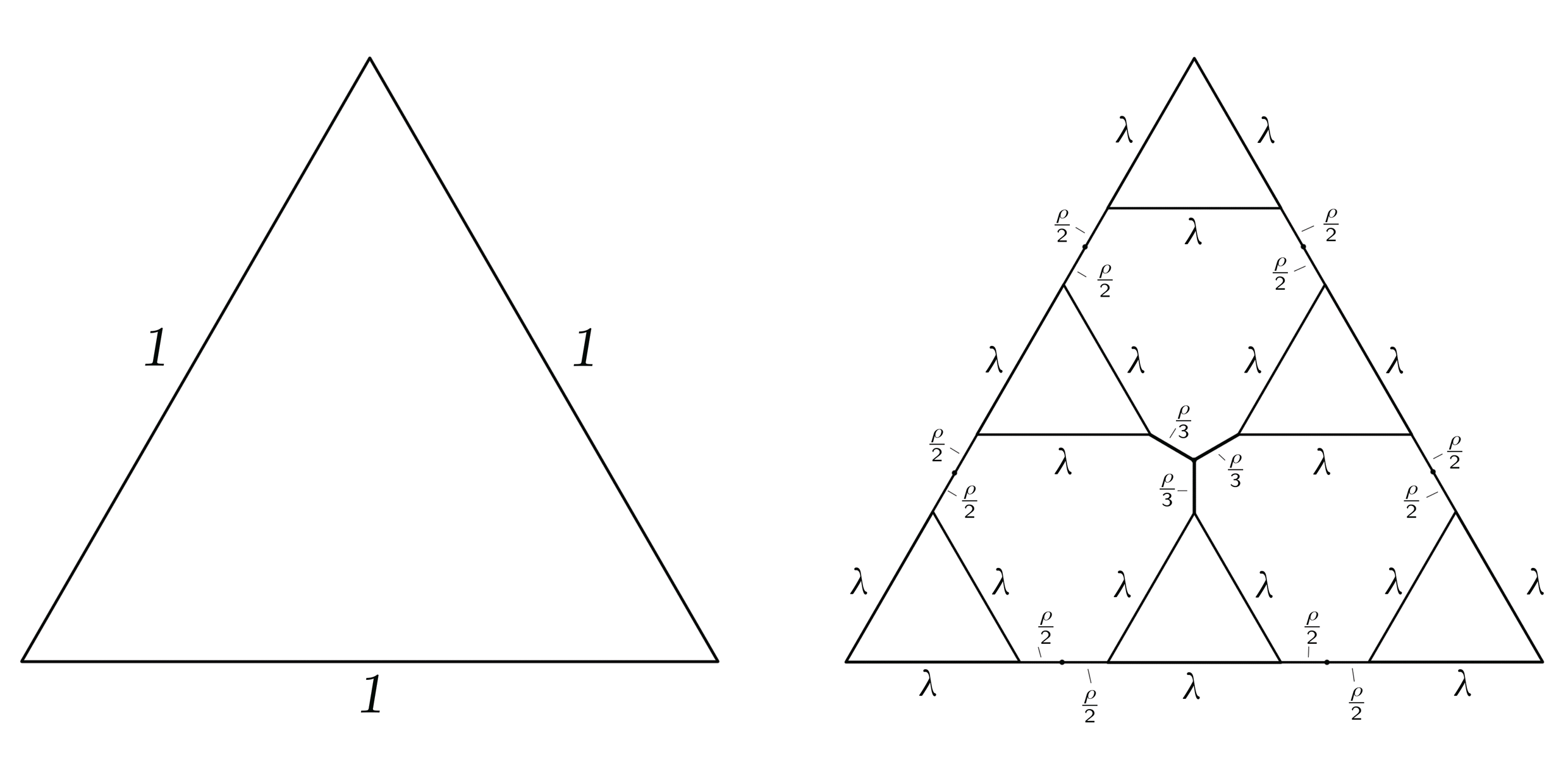}
\caption{Harmonic structure on Stretched Level 3 Sierpinski Gasket}
\label{harm_ssg3}
\end{figure}
These networks are electrically equivalent if and only if the following equation holds.
\begin{align*}
5\lambda^2+\lambda\left(\frac{25}3\rho-\frac 73\right)+5\rho^2-\rho=0
\end{align*}
We can show that this allows pairs $(\lambda,\rho)$ for all $\lambda\in(0,\frac 7{15})$.
If we choose such a harmonic structure in each step we get regular sequences of harmonic structures. The upper limit $\frac 7{15}$ for $\lambda$ is exactly the renormalization in the self-similar case \cite{str00}. 

\subsubsection{Stretched Sierpinski Gasket in higher dimensions}
The graph $(V_0,E_0)$ consists of the complete graph with $d+1$ knots where all edges have resistance $1$. In the first graph approximation we have $d+1$ complete graphs which are all connected over a critical point to all other $d$ complete graphs. The remaining knots are the fixed points of the similitudes. The resistances of the edges in the complete graphs are $\lambda$ and the ones on the connecting edges are $\frac \rho 2$.
\begin{figure}[H]
\centering
\includegraphics[width=\textwidth]{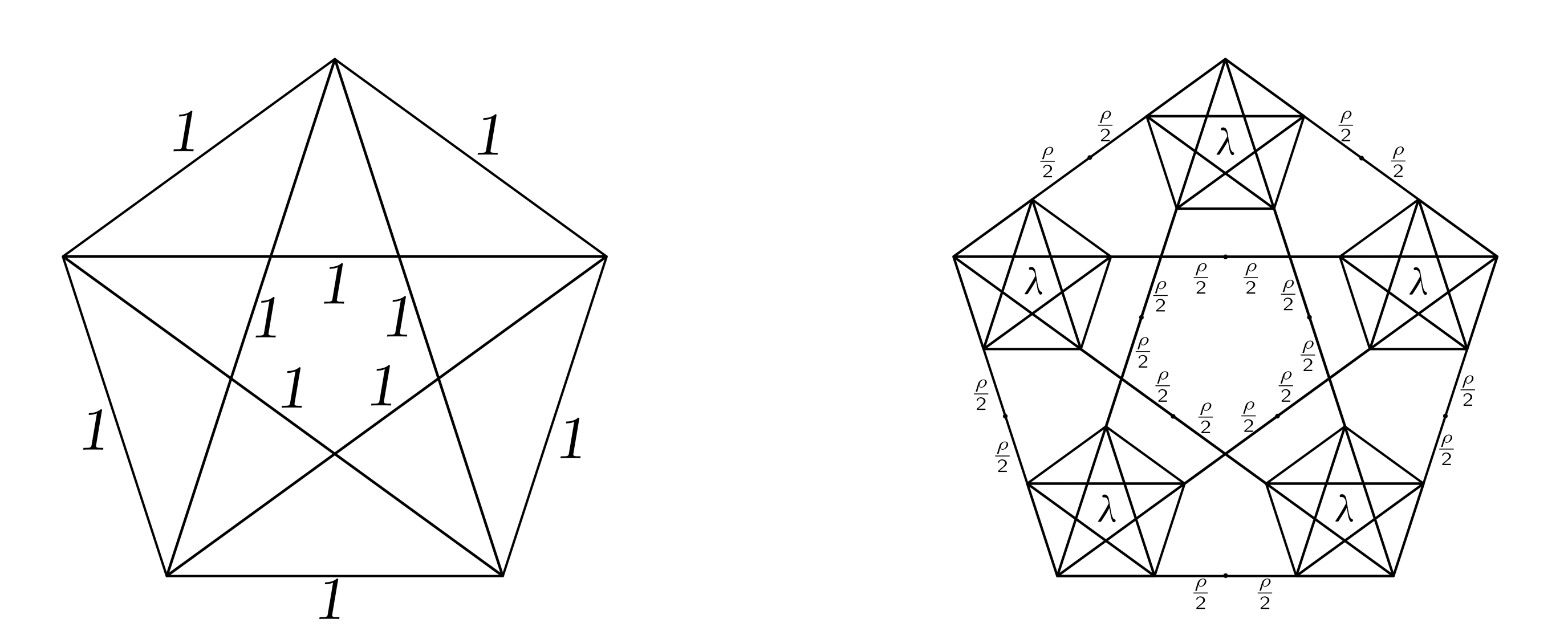}
\caption{Harmonic structure on Sierpinski Gasket in $\protect\mathbb{R}^4$}
\label{harm_ssgd}
\end{figure}
With the help of the star-mesh-transformation, which is a generalization of the $\Delta-Y$-transformation and originally due to Campbell \cite{ca11}, we can show that these networks are equivalent if and only if
\begin{align*}
\lambda\cdot \frac{d+3}{d+1}+\rho=1
\end{align*}
which means that we can reach every $\lambda\in(0,\frac{d+1}{d+3})$ by a pair $(\lambda,\rho)$. The upper limit is the renormalization in the self-similar case \cite{dh77}.

\subsubsection{Stretched Vicsek Set}
Let us choose the following resistances:
\begin{figure}[H]
\centering
\includegraphics[width=\textwidth]{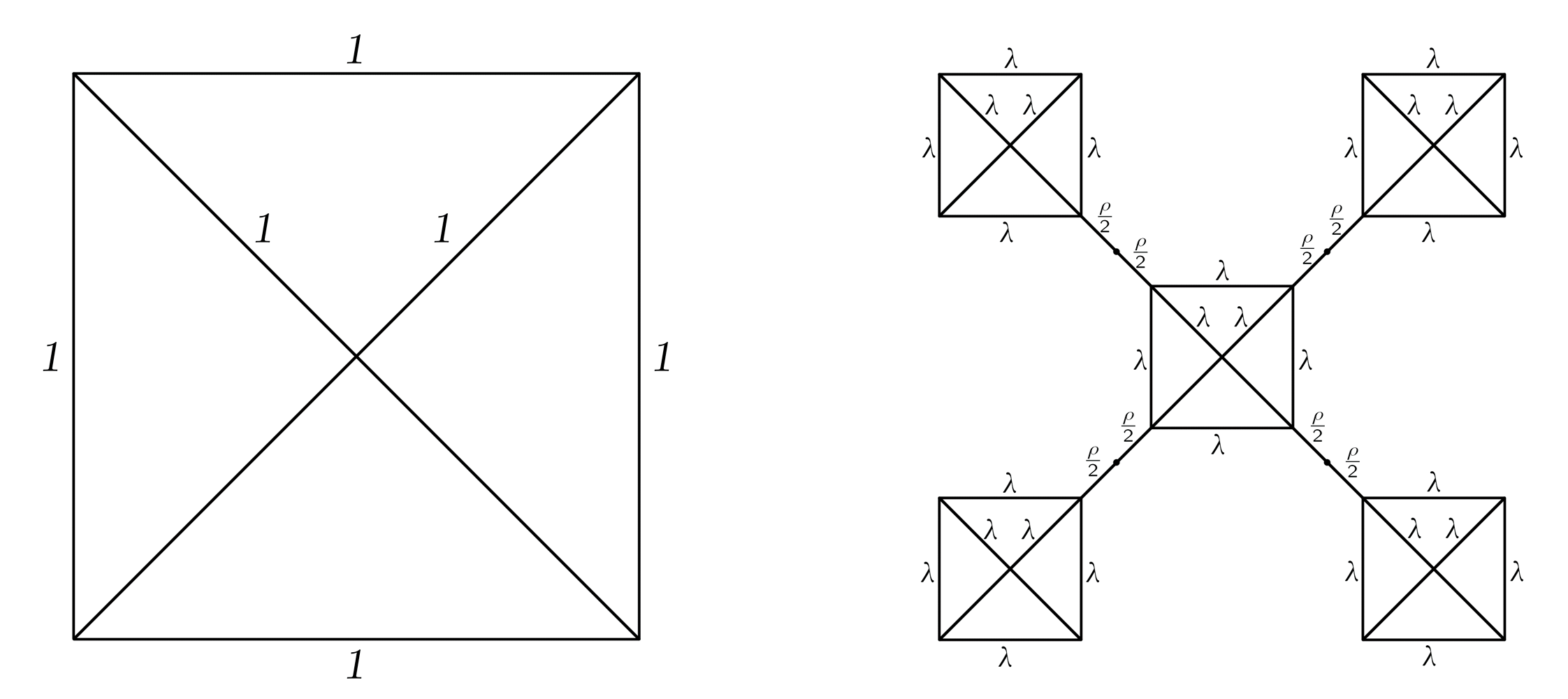}
\caption{Harmonic structure on Stretched Vicsek Set}
\label{harm_svicsek}
\end{figure}
A quick calculation shows that these networks are equivalent if for $(\lambda,\rho)$ it holds that
\begin{align*}
3\lambda+ 4\rho=1
\end{align*}
The choice of $r_0\equiv 1$ comes from the resistances in the self-similar case. In this case this is the only choice which gives us a non-degenerate harmonic structure (see \cite[pp. 85--86]{barl98}). So we use this information and generalize it to the stretched case. We can reach all $\lambda\in(0,\frac 13)$ where again the upper bound is the renormalization in the self-similar case \cite{barl98}.

\subsubsection{Stretched Hata's tree}
We view the graphs $(V_0,E_0)$ and $(V_1,E_1)$ with the following resistances:
\begin{figure}[H]
\centering
\includegraphics[width=\textwidth]{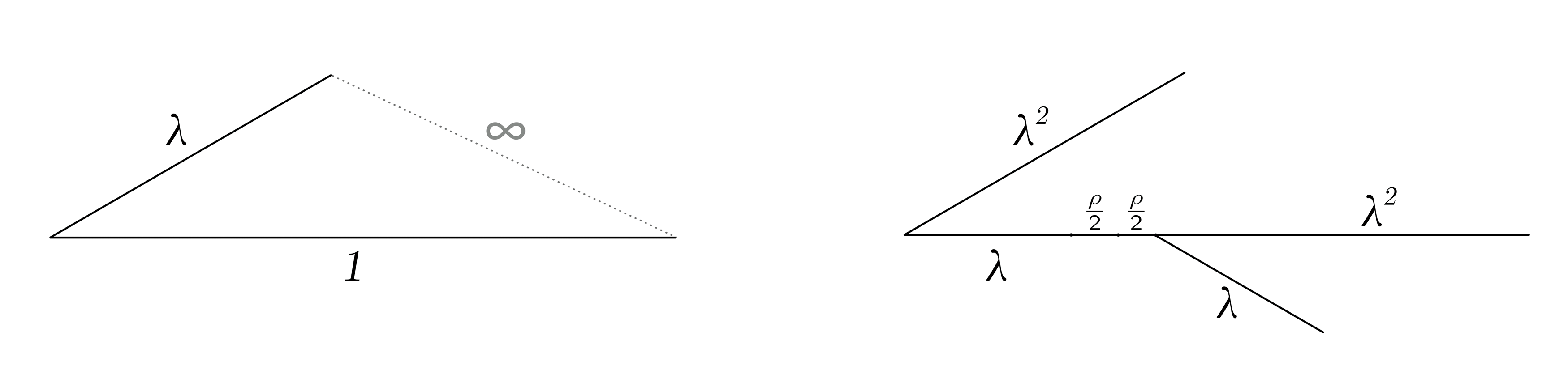}
\caption{Harmonic structure on Stretched Hata's tree}
\label{harm_shata}
\end{figure}
Note that the resistance on the dotted edge is $\infty$ but the graph is still connected. The networks are equivalent if
\begin{align*}
\lambda^2+\lambda+\rho=1
\end{align*}
This is solvable for all $\lambda\in(0,\frac{\sqrt{5}-1}2)$ where the upper bound is the renormalization in the self-similar case \cite[Example 8.4]{kl93}. Note, however, that $r_0$ explicitly depends on $\lambda$. Since $r_0$ has to stay the same when we choose a sequence of harmonic structures we see that here in this case $(\lambda_i)_{i\geq 1}$ has to be constant.

\section{Resistance forms}\label{chapter4}
In this chapter we want to construct resistance forms on stretched fractals. The theory of resistance forms can be found in \cite{kig12} and we want to include a definition at this point.

\begin{definition}\label{defires}
Let $X$ be a set. A pair $(\E,\F)$ is called a resistance form on $X$ if it satisfies the following conditions (RF1) through (RF5):
\begin{enumerate}[(RF1)]
\item $\F$ is a linear subspace of $\ell(X)=\{u|u:X\rightarrow\mathbb{R}\}$ containing constants and $\E$ is a non-negative symmetric quadratic form on $\F$. $\E(u)=0$ if and only if $u$ is constant on $X$. 
\item Let $\sim$ be the equivalence relation on $\F$ defined by $u\sim v$ if and only if $u-v$ is constant on $X$. Then $(\F/{\sim},\E)$ is a Hilbert space.
\item If $x\neq y$, then there exists $u\in \F$ such that $u(x)\neq u(y)$.
\item For any $p,q\in X$,
\begin{align*}
\sup\left\{\frac{|u(p)-u(q)|^2}{\E(u)}\ |\ u\in \F,\E(u)>0\right\}
\end{align*}  
is finite. The above supremum is denoted by $R_{(\E,\F)}(p,q)$ and it is called the resistance metric on $X$ associated with the resistance form $(\E,\F)$.
\item For any $u\in \F, \overline{u}\in \F$ and $\E(\overline{u})\leq \E(u)$, where $\overline{u}$ is defined by
\begin{align*}
\overline{u}(p)=\begin{cases}
1 & \text{if } u(p)\geq 1\\
u(p) & \text{if } 0<u(p)<1\\
0 & \text{if } u(p)\leq 0
\end{cases}
\end{align*}
\end{enumerate}
\end{definition}
\vspace*{1cm}
We consider regular sequences of harmonic structures (with fixed $r_0$)
\begin{align*}
\mathcal{R}=(\lambda_i,\underbrace{\{\rho^i_{\{x,y\}}, \ \{x,y\}\in E_1^I\}}_{\boldsymbol{\rho}^i:=})_{i\geq 1}
\end{align*}
i.e. $\exists \lambda^\ast <1$ with $\lambda_i\leq \lambda^\ast, \ \forall i$. Also we have $r_n$ like before and write $r_n(e)=r_n(\{x,y\})=:r_n(x,y)$.\\

The resistance form will consist of two parts that represent the fractal and the line part that is present in these stretched fractals. The fractal part is very similar to the usual resistance form on the self-similar set (i.e. attractor of the $F_i$).\\[.2cm]
We will first construct a resistance form on $V_\ast$ which doesn't consider the one-dimensional lines. This can be extended to a quadratic form on the closure of $V_\ast$ w.r.t. the resistance metric. Next we show that the Euclidean and resistance metric introduce the same topology, that means the closure of $V_\ast$ is the same with either one. We thus have a resistance form on $\Sigma\cup C_\ast$. The next step is to substitute parts of the resistance form and introduce Dirichlet energies on the one-dimensional lines. This is then shown to be a resistance form on the whole set $K$. Again the resistance metric (on $K$) introduces the same topology as the Euclidean metric.
\subsection{Resistance form on $\protect V_\ast$}
First we define a quadratic form on the approximating graphs that is associated to the energy on the electrical network.\\

\textbf{1. Fractal part}\\[.1cm]
Let $u: V_0\rightarrow \mathbb{R}$.
\begin{align*}
\hat{\mathcal{E}}_{\mathcal{R},0}(u):=Q_{r_0}^\Sigma(u):=\sum_{\{x,y\}\in E_0} \frac 1{r_0(x,y)}(u(x)-u(y))^2
\end{align*}
With this define a quadratic form for $u:V_n\rightarrow \mathbb{R}$ by
\begin{align*}
\hat{\mathcal{E}}_{\mathcal{R},n}^\Sigma(u):=\sum_{w\in\mathcal{A}^n} \frac 1{\lambda_1\cdots \lambda_n}Q_{r_0}^\Sigma (u\circ G_w)
\end{align*}
Use the abbreviation $\delta_n:=\lambda_1\cdots \lambda_n$.\\

\textbf{2. Line part}\\[.1cm]
For $u:V_1\rightarrow \mathbb{R}$.
\begin{align*}
Q_{\boldsymbol{\rho}}^I(u):=\sum_{\{x,y\}\in E_1^I}\frac 1{\rho_{\{x,y\}}} (u(x)-u(y))^2
\end{align*}
Again we can use this to define a quadratic form for $u:V_n\rightarrow \mathbb{R}$:
\begin{align*}
\hat{\mathcal{E}}_{\mathcal{R},n}^I(u):=Q_{\boldsymbol{\rho}^1}^I(u)+\sum_{k=2}^n \frac 1{\lambda_1\cdots \lambda_{k-1}}\underbrace{\sum_{w\in\mathcal{A}^{k-1}}Q_{\boldsymbol{\rho}^k}^I(u\circ G_w)}_{Q_{\boldsymbol\rho^k,k}^I(u):=}
\end{align*}
We denote $\gamma_1:=1$ and $\gamma_k:=\delta_{k-1}=\lambda_1\cdots \lambda_{k-1}$ for $k\geq 2$. Then this writes as follows for $n\geq 1$
\begin{align*}
\hat{\mathcal{E}}_{\mathcal{R},n}^I(u):=\sum_{k=1}^n \frac 1{\gamma_k}Q_{\boldsymbol \rho^k,k}^I(u)
\end{align*}

We can now define a quadratic form that is defined on $\ell(V_n)=\{u|u:V_n\rightarrow \mathbb{R}\}$ for $n\geq 1$.
\begin{align*}
\hat{\mathcal{E}}_{\mathcal{R},n}(u):=\hat{\mathcal{E}}^\Sigma_{\mathcal{R},n}(u)+\hat{\mathcal{E}}^I_{\mathcal{R},n}(u)
\end{align*}
Since $V_n$ is finite, all these quadratic forms are resistance forms and since the graphs form a sequence of equivalent electrical networks the sequence of resistance forms $(\hat{\mathcal{E}}_{\mathcal{R},n},\ell(V_n))_{n\geq 0}$ builds a sequence of compatible resistance forms. That means $(\hat{\mathcal{E}}_{\mathcal{R},n}(u|_{V_n}))_{n\geq 0}$ is a non-decreasing sequence for all $u\in \ell( V_\ast)$ and, therefore, a limit exists in $[0,\infty]$.
\begin{align*}
\hat{\mathcal{E}}_\mathcal{R}(u):=\lim_{n\rightarrow\infty}\hat{\mathcal{E}}_{\mathcal{R},n}(u|_{V_n})
\end{align*}
which is defined on 
\begin{align*}
\hat{\mathcal{F}}_{\mathcal{R}}:=\{u\ |\ u\in\ell(V_\ast), \ \lim_{n\rightarrow\infty}\hat{\mathcal{E}}_{\mathcal{R},n}(u|_{V_n})<\infty\}
\end{align*}
From general theory it follows that $(\hat{\mathcal{E}}_\mathcal{R},\hat{\mathcal{F}}_\mathcal{R})$ is a resistance form on $V_\ast$ (see \cite[Theorem 3.13]{kig12}).
\subsection{Resistance form on $\protect \Sigma\cup C_\ast$} 
By general theory (again \cite[Theorem 3.13]{kig12}) we know that this can be extended to a resistance form on $\overline{V}_\ast$ where the closure is taken w.r.t. the resistance metric of $(\hat{\mathcal{E}}_\mathcal{R},\hat{\mathcal{F}}_\mathcal{R})$. We will denote this metric by $\hat{R}_{\mathcal{R}}(\cdot,\cdot)$. We are, however, interested in a resistance form on $\Sigma\cup C_\ast$. We want to show that the resistance metric and the Euclidean metric are inducing the same topology and, therefore, we can get a resistance form on $\overline{V}_\ast=\Sigma\cup C_\ast$ where the closure is taken with either metric.\\[.2cm]
Denote by $R_{\R,m}(\cdot,\cdot)$ the resistance metric on $V_m$ from $(\hat{\mathcal{E}}_{\mathcal{R},m},\ell(V_m))$. The diameter of a set $X$ w.r.t. a metric $d$ is denoted by $\operatorname{diam}(X,d):=\sup\{d(x,y) \ : \ x,y\in X\}$.
\begin{lemma}\label{lem41} $\operatorname{diam}(V_m,R_{\R,m})\leq c<\infty, \ \forall m$ with a constant $c$ that only depends on $\lambda^\ast$, $\rho^\ast$ and $r_0$.
\end{lemma}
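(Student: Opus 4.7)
My approach will be to bound the resistance metric by the weighted shortest-path metric and then exploit the geometric decay of edge weights across scales. Since the effective resistance between two nodes in a finite electrical network is always bounded above by the total resistance along any single connecting path, it suffices to prove
\[
\operatorname{diam}(V_m, d_m) \leq c
\]
uniformly in $m$, where $d_m$ denotes the shortest-path metric on $(V_m, E_m, r_m)$ with edge weights given by $r_m$.

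The key quantities are $D_0 := \operatorname{diam}(V_0, d_0)$, which is finite (even though $r_0$ may take the value $\infty$ on some edges) by the assumed connectedness of $(V_0, E_0, r_0)$, together with the following geometric bounds across scales: a fractal edge $G_w e \in E_n^\Sigma$ with $|w|=n$ carries resistance $\delta_n r_0(e)$, so the internal diameter of any $n$-cell $G_w(V_0)$ satisfies $\operatorname{diam}(G_w(V_0), d_m) \leq \delta_n D_0 \leq (\lambda^*)^n D_0$; and a connecting edge in $E_{k,k}^I$ has resistance at most $\delta_{k-1}\rho^*$ by regularity.

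Given $x,y\in V_m$, I would construct an explicit path through the cell hierarchy: starting from $x$, traverse its ambient $m$-cell to a post-critical or critical boundary vertex, then either cross a connecting edge at level $m$ into an adjacent $m$-cell or ascend to the containing $(m-1)$-cell and iterate, and symmetrically descend to $y$. At each level $k$, this path can be arranged so that it crosses at most $M_1$ many $k$-cells and at most $M_2$ many level-$k$ connecting edges, where $M_1,M_2$ are purely combinatorial constants depending only on the p.c.f.\ data of $F$ (through $N$, $\#\P$, $\#\C$, and $\max_{c\in\C}\rho(c)$). Summing these contributions and using $\delta_k \leq (\lambda^*)^k$ yields
\[
d_m(x,y) \leq M_1 D_0 \sum_{k=0}^{m}\delta_k + M_2 \rho^* \sum_{k=1}^{m}\delta_{k-1} \leq \frac{M_1 D_0 + M_2 \rho^*}{1-\lambda^*} =: c,
\]
which depends only on $\lambda^*$, $\rho^*$, and $r_0$ (through $D_0$), as required.

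The main obstacle is the combinatorial step of making the path construction uniform in $m$. One must argue that a suitable choice of intermediate cells at each scale can always be made without nested excursions, so that $M_1$ and $M_2$ really are bounded independently of $m$. This uses the finiteness of $\P$, the connectivity condition (\ref{pcfcond}) which provides for each $c\in\C$ a direct route from any of the $\rho(c)$ attached cells to the critical point through a single connecting edge, together with (\ref{pcfcond2}) ensuring the connecting edges and cells meet transversally rather than forcing additional detours.
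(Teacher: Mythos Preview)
Your overall strategy---bound effective resistance by path resistance and sum a geometric series over scales---is exactly the paper's. However, there is a genuine gap in your execution, and it is not the combinatorial one you flag as the ``main obstacle.''

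The problematic step is the claim that $\operatorname{diam}(G_w(V_0), d_m) \leq \delta_n D_0$ for $|w|=n$ with $n<m$, justified by ``a fractal edge $G_w e \in E_n^\Sigma$ carries resistance $\delta_n r_0(e)$.'' The issue is that $E_m^\Sigma$ contains only the level-$m$ fractal edges; for $n<m$ the edges of $E_n^\Sigma$ simply do not belong to $E_m$. Thus in the shortest-path metric $d_m$ on $(V_m,E_m,r_m)$ you cannot hop across an $n$-cell using a single level-$n$ edge---you must navigate through all the finer cells and level-$(n{+}1),\ldots,m$ connecting edges inside it. Bounding that traversal cost is precisely a rescaled version of the diameter bound you are trying to prove, so the argument as written is circular. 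Committing at the outset to the shortest-path metric $d_m$ throws away the one tool that dissolves this difficulty.

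The paper sidesteps both this gap and your combinatorial worry simultaneously. First, instead of connecting $x$ to $y$ directly, it connects each of them to $V_0$ via a canonical descending chain $q_k=G_{w_1\cdots w_k}(\tilde p)$; this makes your $M_1,M_2$ essentially equal to~$1$ (each step lies inside a single scaled copy of $(V_1,E_1)$), so no combinatorial argument is needed. Second---and this is the key point you are missing---it does \emph{not} stay inside $(V_m,E_m)$: for the step between $q_{k-1}$ and $q_k$ it uses $R_{\R,m}(q_k,q_{k-1})\le R_{\R,k}(q_k,q_{k-1})$ (monotonicity/compatibility of effective resistance in the nested networks), and at level~$k$ the relevant scaled copy of $(V_1,E_1)$ \emph{is} an honest subgraph of $(V_k,E_k)$, so a crude bound by the sum of its edge resistances gives $\delta_{k-1}C$ immediately. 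Your proof becomes correct if you replace $d_m$ by $R_{\R,m}$ and invoke this monotonicity at each scale; alternatively, you could rescue the shortest-path approach by an induction on $m$, but that is strictly more work than the paper's route.
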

\begin{proof}
Define a constant:
\begin{align*}
C:=\left(\sum_{c \in \mathcal{C}}\rho(c)\right)\rho^\ast+ N\sum_{\substack{e\in E_0\\ r_0(e)<\infty}} r_0(e)
\end{align*}
The first sum is exactly the number of connecting edges in $(V_1,E_1)$ that means we could also write $\#E_1^I$. We then multiply it by an upper bound for all $\rho^i_e$. The second part is the sum of all finite resistances in $E_0$ and then multiplied by the number of similitudes. Note that $\lambda_1\leq 1$. That means $C$ is an upper bound for the sum of all finite resistances on $(V_1,E_1)$ independent of the choice of $\boldsymbol \rho$ from $\boldsymbol{\rho}^i$.\\

Now let $q$ be any point of $V_1$, then it holds that
\begin{align*}
R_{\R,1}(q,p)\leq C, \ \forall p\in V_0
\end{align*}
Since $(V_1,E_1)$ is connected there is a path from $q$ to $p$ where each edge has finite resistance and is only used once. In $C$ we count each edge of $E_1$ with finite resistance and, therefore, get an upper bound of the summed up resistances along this path. Due to the triangle inequality and the fact that the effective resistance is always less or equal to the direct resistance we get the desired inequality.\\

\begin{figure}[H]
\centering
\includegraphics[width=0.6\textwidth]{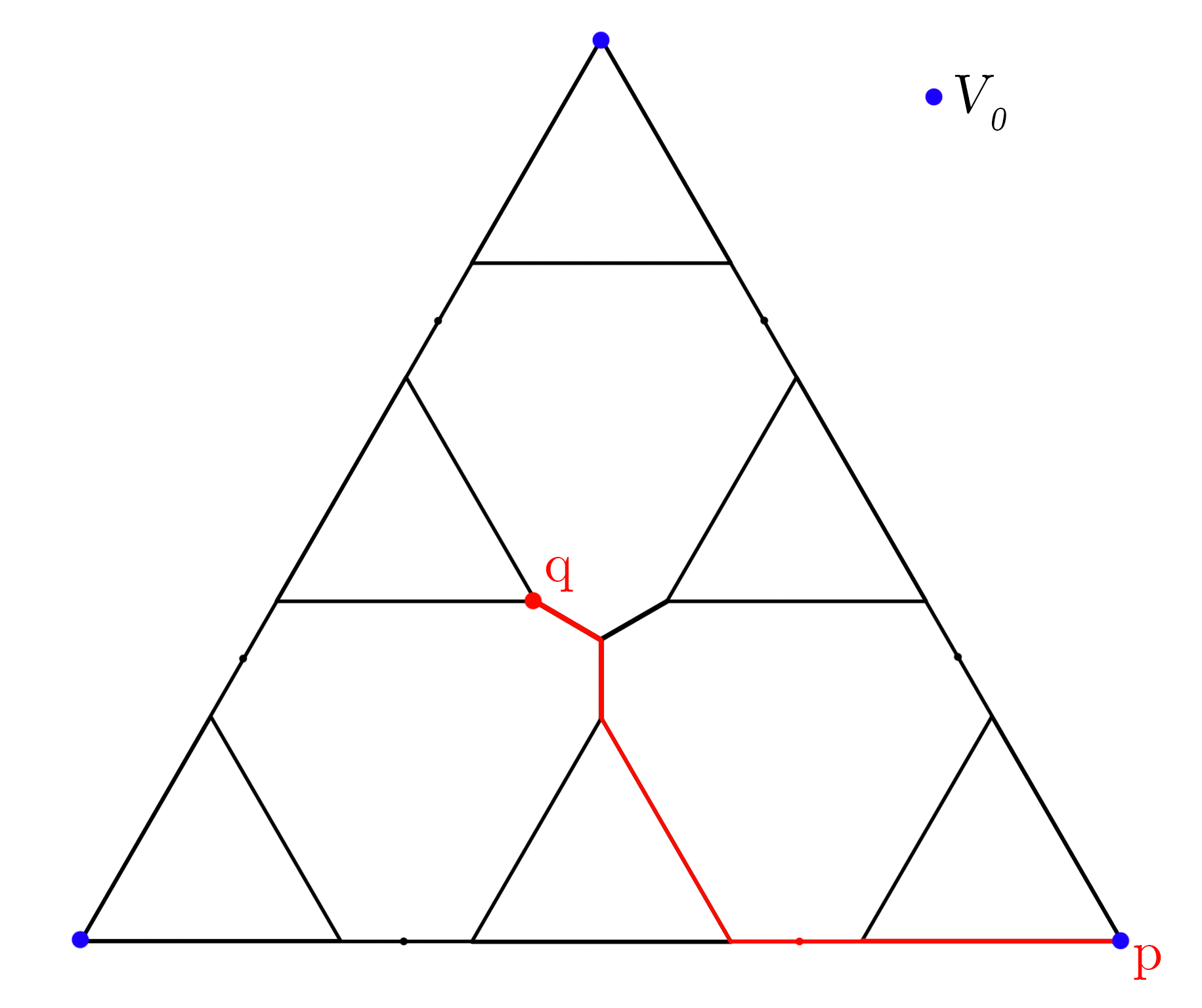}
\caption{Connect $\protect V_1$ with $\protect V_0$}
\end{figure}

Next let $q_1$ be any point of $V_2$ and look for a path to the next point in $V_1$ (let's call it $p_1$). The problem is the same as from $V_1$ to $V_0$ but the resistances are multiplied by $\lambda_1$. That means
\begin{align*}
R_{\R,2}(q_1,p_1)\leq \lambda_1 C
\end{align*}

\begin{figure}[H]
\centering
\includegraphics[width=0.6\textwidth]{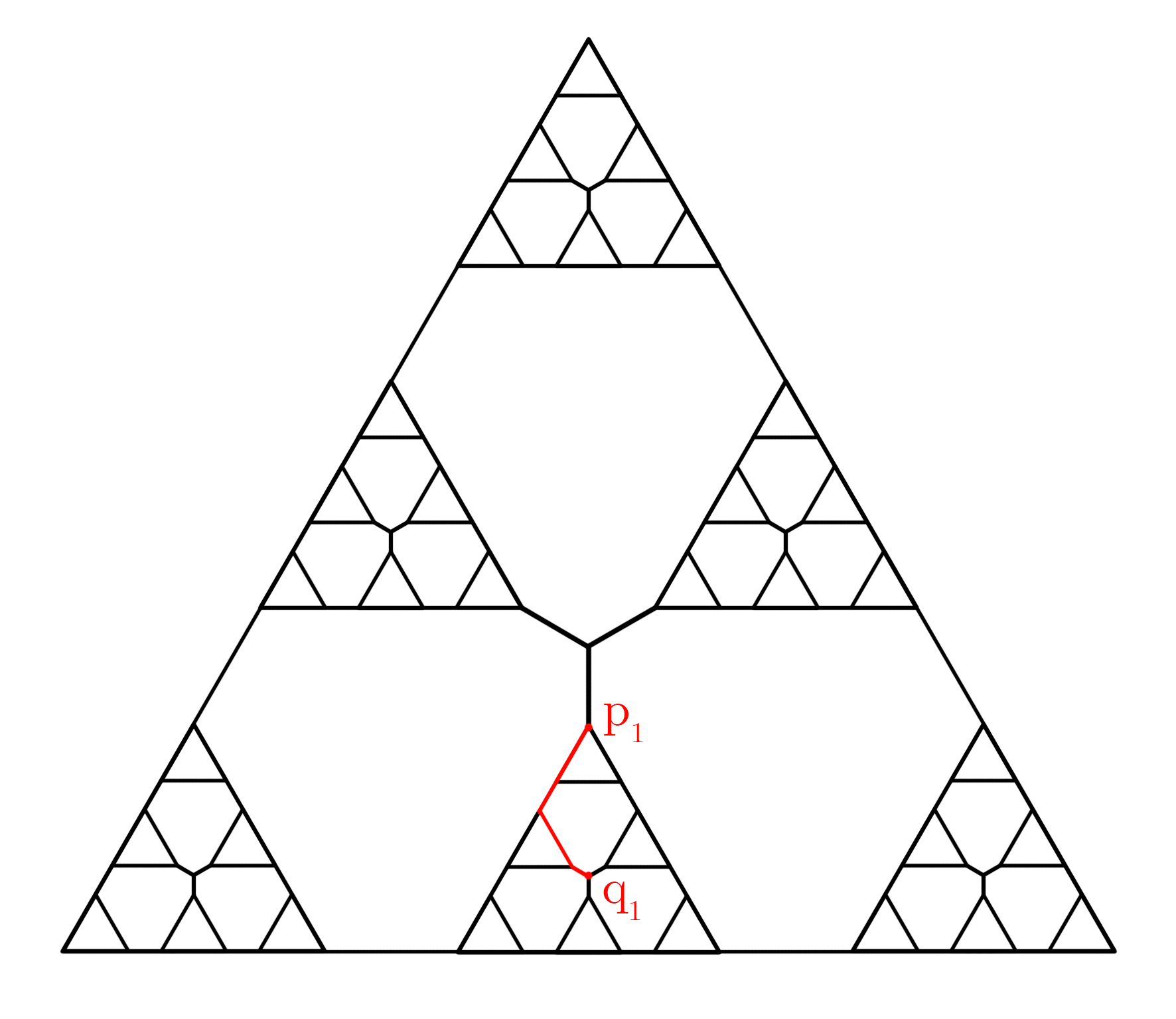}
\caption{Connect $\protect V_2$ with $\protect V_1$}
\end{figure}

Now let $q\in V_n$ and we want to define a sequence of points in $V_k$ from some $p\in V_0$ to $q$. First assume that $q\in P_n$, that means $q=G_{w_1\cdots w_n}(\tilde p)$ for some $\tilde p\in V_0$.
Then define 
\begin{align*}
q_n&:=q\\
q_k&:=G_{w_1\cdots w_k}(\tilde p), \ k =1,\ldots,n-1\\
q_0&:=p\in V_0,\  (\text{arbitrary})
\end{align*}

\begin{figure}[H]
\centering
\includegraphics[width=0.6\textwidth]{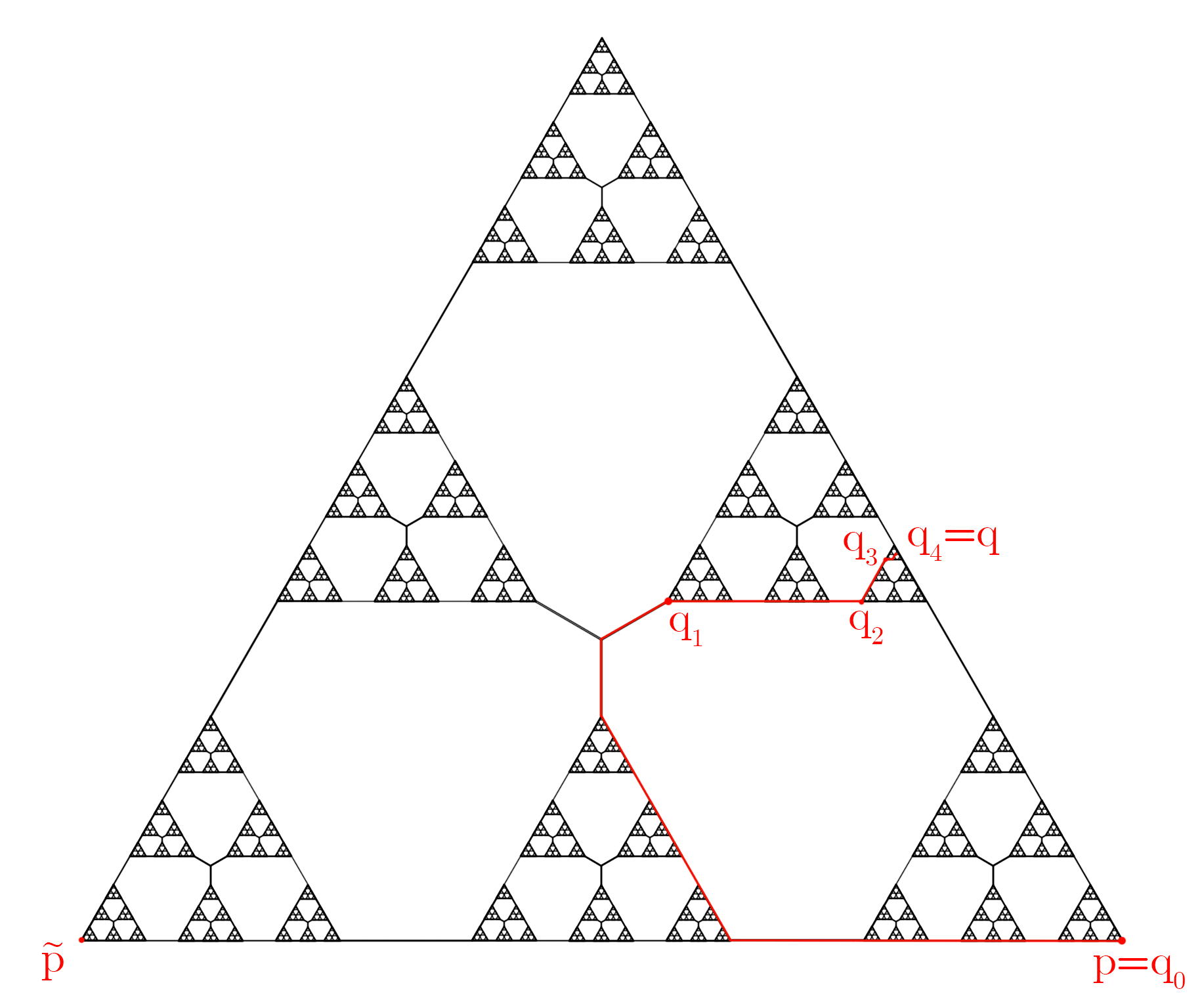}
\caption{Path from $\protect q$ to $\protect p$}
\end{figure}

Actually we can choose any point $\tilde p\in V_0$ for the definition of $q_k$, it is only important that $q_k$ and $q_{k+1}$ are in the same $k+1$-cell.

If $q$ is not in $P_n$, that means $q\in C_n$, we have to add an additional point $q_{n}\in P_n$. Choose one that is connected to $q$ in $\Gamma_n$ and define $q_{n+1}=q$. This is always possible and the resistance is always $\leq \rho^\ast$.
\begin{align*}
\Rightarrow R_{\R,n}(q,p)&\leq \underbrace{p^\ast}_{\text{if $q$ is not in $P_n$}} + \sum_{k=1}^n  R_{\R,n}(q_k,q_{k-1})\\
&\leq p^\ast +\sum_{k=1}^nR_{\R,k}(q_k,q_{k-1})\\
&\leq p^\ast + \sum_{k=1}^n\underbrace{\lambda_1\cdots \lambda_{k-1}}_{:=1 \text{ for } k=1}C\\
&\leq p^\ast + C\sum_{k=1}^n (\lambda^\ast)^{k-1}\\
&\leq p^\ast + C\sum_{k=0}^\infty (\lambda^\ast)^{k}=:\tilde C<\infty
\end{align*}
This holds, since the sequence of harmonic structures is regular and therefore $\lambda^\ast<1$.\\

Now if $q,\tilde q\in V_n$ then choose any point $p\in V_0$.
\begin{align*}
\Rightarrow R_{\R,n}(q,\tilde q)&\leq R_{\R,n}(q,p)+R_{\R,n}(p,\tilde q)\\
&\leq 2\tilde C 
\end{align*}
Therefore
\begin{align*}
\operatorname{diam}(V_n,R_{\R,n})\leq 2\tilde C, \ \forall n
\end{align*}
The points $q_0,\ldots,q_n$ can be chosen very arbitrarily, the only condition is that $q_{k-1}$ and $q_k$ are in the same $k$-cell. Because of this we are allowed to choose the same point in $V_0$ for $q$ and $\tilde q$. \end{proof}
In the self-similar case some rescaling property of the resistance form was very important. We have something similar here, but not quite as nice.
\begin{lemma}[Rescaling of $\hat{\E}_\R$] \label{lemrescalinghat} Let $\mathcal{R}=(\lambda_i,\boldsymbol \rho^i)_{i\geq 1}$ be a sequence of harmonic structures for $r_0$ and let $\mathcal{R}^{(n)}:=(\lambda_{n+i},\boldsymbol\rho^{n+i})_{i\geq 1}$ be the sequence that starts at $n+1$. Then it holds for $u\in \hat{\mathcal{F}}_\mathcal{R}$ that $u\circ G_w\in\hat{\mathcal{F}}_{\mathcal{R}^{(n)}}$ for all $w\in\mathcal{A}^n$ and:
\begin{align*}
\hat{\mathcal{E}}_{\mathcal{R}}(u)=\sum_{w\in\mathcal{A}^n}\frac 1{\delta_n}\hat{\mathcal{E}}_{\mathcal{R}^{(n)}}(u\circ G_w)+\sum_{k=1}^n\frac 1{\gamma_k} Q_{\boldsymbol \rho^k,k}^I(u)
\end{align*}
\end{lemma}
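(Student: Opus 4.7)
The plan is to establish the identity first at the finite level, i.e., to show that for every $m\geq n$ and every $u\in\ell(V_m)$,
\begin{align*}
\hat{\E}_{\R,m}(u)=\sum_{w\in\A^n}\frac{1}{\delta_n}\hat{\E}_{\R^{(n)},m-n}(u\circ G_w)+\sum_{k=1}^n\frac{1}{\gamma_k}Q^I_{\boldsymbol\rho^k,k}(u),
\end{align*}
and then take $m\to\infty$. To do this I would split $\hat{\E}_{\R,m}=\hat{\E}^\Sigma_{\R,m}+\hat{\E}^I_{\R,m}$ and treat each piece separately, exploiting the semigroup-type factorizations $G_v=G_w\circ G_{v'}$ whenever $v=wv'$ with $|w|=n$, together with the multiplicative identities $\delta_m=\delta_n\cdot\delta^{(n)}_{m-n}$ and, for $k>n$, $\gamma_k=\delta_n\cdot\gamma^{(n)}_{k-n}$, where the superscript $(n)$ denotes the analogous quantity formed from the shifted sequence $\R^{(n)}$.

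For the fractal part, reindexing the sum $\sum_{v\in\A^m}$ as $\sum_{w\in\A^n}\sum_{v'\in\A^{m-n}}$ immediately yields
\begin{align*}
\hat{\E}^\Sigma_{\R,m}(u)=\sum_{w\in\A^n}\frac{1}{\delta_n}\sum_{v'\in\A^{m-n}}\frac{1}{\delta^{(n)}_{m-n}}Q^\Sigma_{r_0}\bigl((u\circ G_w)\circ G_{v'}\bigr)=\sum_{w\in\A^n}\frac{1}{\delta_n}\hat{\E}^\Sigma_{\R^{(n)},m-n}(u\circ G_w).
\end{align*}
For the line part, I split $\sum_{k=1}^m=\sum_{k=1}^n+\sum_{k=n+1}^m$: the first part reproduces the second sum in the claim verbatim, since the edges in $E^I_{k,k}$ for $k\leq n$ and their resistances $\boldsymbol\rho^k$ are unaffected by the shift. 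For the second part, write $w\in\A^{k-1}$ as $w_0 w'$ with $w_0\in\A^n$ and $w'\in\A^{k-n-1}$; using $G_{w_0 w'}=G_{w_0}\circ G_{w'}$ and $\gamma_k=\delta_n\gamma^{(n)}_{k-n}$, reindexing gives exactly $\sum_{w_0\in\A^n}\delta_n^{-1}\hat{\E}^I_{\R^{(n)},m-n}(u\circ G_{w_0})$. Adding the two parts produces the claimed finite-level identity.

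Finally I pass to the limit $m\to\infty$. The second sum on the right-hand side is independent of $m$ for $m\geq n$. The sequences $\hat{\E}_{\R,m}(u|_{V_m})$ and, for each fixed $w$, $\hat{\E}_{\R^{(n)},m-n}(u\circ G_w|_{V_{m-n}})$ are non-decreasing and non-negative by compatibility of the approximating resistance forms. Hence, assuming $u\in\hat{\F}_\R$, the left-hand side has a finite limit, forcing every term $\hat{\E}_{\R^{(n)},m-n}(u\circ G_w|_{V_{m-n}})$ to remain bounded; this yields $u\circ G_w\in\hat{\F}_{\R^{(n)}}$ and, by monotone convergence applied to the finite sum over $w\in\A^n$, the desired equality.

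The main obstacle I anticipate is purely bookkeeping: correctly aligning the shifted indices, verifying that $E^\Sigma_m$ and $E^I_{k,k}$ decompose cleanly along the cell structure of level $n$ (so no edge is double-counted or dropped), and handling the edge case $k=n+1$ where $\gamma^{(n)}_1=1$ matches $\gamma_{n+1}/\delta_n=\delta_n/\delta_n=1$. Once the combinatorial decomposition of edges is in place, the rest is a routine reindexing and monotone-convergence argument.
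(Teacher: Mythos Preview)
Your proposal is correct and follows essentially the same approach as the paper: both establish the finite-level identity by splitting $\hat{\E}_{\R,m}$ into its $\Sigma$- and $I$-parts, reindexing $\A^m=\A^n\times\A^{m-n}$ (the paper writes the big index as $n+m$ rather than $m$, but this is only notation), splitting the line sum at $k=n$, and then passing to the limit. Your explicit mention of monotonicity to justify $u\circ G_w\in\hat{\F}_{\R^{(n)}}$ is a slight elaboration over the paper's terse ``taking the limit for $m\to\infty$'', but the argument is the same.
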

\begin{proof}
\begin{align*}
\hat{\mathcal{E}}_{\mathcal{R},n+m}(u)&=\hat{\mathcal{E}}^\Sigma_{\mathcal{R},n+m}(u)+\hat{\mathcal{E}}^I_{\mathcal{R},n+m}(u)\\
&=\sum_{w\in\mathcal{A}^{n+m}}\frac 1{\delta_{n+m}}Q_{r_0}^\Sigma(u\circ G_w)+\sum_{k=1}^{m+n}\frac 1{\gamma_k} \sum_{w\in\mathcal{A}^{k-1}}Q_{\boldsymbol\rho^k}^I(u\circ G_w)\\
&=\sum_{w\in\mathcal{A}^n} \frac 1{\delta_n}\sum_{\tilde w\in\mathcal{A}^m}\frac 1{\lambda_{n+1}\cdots\lambda_{n+m}}Q_{r_0}^\Sigma(u\circ G_{ w}\circ G_{\tilde w})\\
&\hspace*{.5cm} + \sum_{k=1}^n \frac 1{\gamma_k} Q_{\boldsymbol{\rho}^k,k}(u)\\
&\hspace*{.5cm} + \sum_{w\in\mathcal{A}^n}\frac 1{\delta_n}\sum_{k=1}^{m}\underbrace{\frac 1{\lambda_{n+1}\cdots \lambda_{n+k-1}}}_{:=1 \text{ for } k=1}\sum_{\tilde w\in\mathcal{A}^{k-1}}Q_{\boldsymbol\rho^k}^I(u\circ G_w\circ G_{\tilde w})\\
&=\sum_{w\in\mathcal{A}^n}\frac 1{\delta_n}\left(\hat{\mathcal{E}}^\Sigma_{\mathcal{R}^{(n)},m}(u\circ G_w)+\hat{\mathcal{E}}^I_{\mathcal{R}^{(n)},m}(u\circ G_w)\right)+\sum_{k=1}^n \frac 1{\gamma_k}Q_{\boldsymbol\rho^k,k}(u)\\
&=\sum_{w\in\mathcal{A}^n}\frac 1{\delta_n}\hat{\mathcal{E}}_{\mathcal{R}^{(n)},m}(u\circ G_w)+\sum_{k=1}^n\frac 1{\gamma_k}Q_{\boldsymbol\rho^k,k}(u)
\end{align*}
By taking the limit for $m\rightarrow\infty$ we get the desired result. 
\end{proof}

\begin{lemma} \label{lem43} $\operatorname{diam}(G_wV_\ast,\hat R_{\mathcal{R}})\leq c\delta_n$ for all $w\in\mathcal{A}^n$ with a constant $c$ only depending on $\lambda^\ast$, $\rho^\ast$ and $r_0$.
\end{lemma}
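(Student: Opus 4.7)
The plan is to combine the rescaling identity of Lemma~\ref{lemrescalinghat} with the uniform diameter bound of Lemma~\ref{lem41}, applied to the shifted sequence $\R^{(n)}$.

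First I would observe that $\R^{(n)}=(\lambda_{n+i},\boldsymbol\rho^{n+i})_{i\geq 1}$ is itself a regular sequence of harmonic structures for the same $r_0$, inheriting the same regularity constants $\lambda^\ast$ and $\rho^\ast$ as $\R$. Applying Lemma~\ref{lem41} to $\R^{(n)}$ thus yields one and the same constant $c=c(\lambda^\ast,\rho^\ast,r_0)$ with $\operatorname{diam}(V_m,R_{\R^{(n)},m})\leq c$ for all $m$ and all $n$. Because the sequence of approximating resistance forms is compatible, $\hat R_{\R^{(n)}}$ restricted to $V_m$ agrees with $R_{\R^{(n)},m}$, and passing to the union $V_\ast=\bigcup_m V_m$ then gives $\operatorname{diam}(V_\ast,\hat R_{\R^{(n)}})\leq c$ uniformly in $n$.

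Second, for fixed $w\in\A^n$, Lemma~\ref{lemrescalinghat} relates resistances on $G_wV_\ast$ to those on $V_\ast$ for the shifted form: for any $u\in\hat\F_\R$, both sums on the right-hand side of the rescaling identity are non-negative, and dropping the entire second sum together with all summands indexed by $w'\neq w$ in the first gives
\[
\hat\E_\R(u)\ \geq\ \frac{1}{\delta_n}\,\hat\E_{\R^{(n)}}(u\circ G_w).
\]
Writing $v:=u\circ G_w\in\hat\F_{\R^{(n)}}$ and using $v(p)=u(G_wp)$, $v(q)=u(G_wq)$, this estimate turns into
\[
\frac{(u(G_wp)-u(G_wq))^2}{\hat\E_\R(u)}\ \leq\ \delta_n\,\frac{(v(p)-v(q))^2}{\hat\E_{\R^{(n)}}(v)}\ \leq\ \delta_n\,\hat R_{\R^{(n)}}(p,q)
\]
for all $p,q\in V_\ast$ with $\hat\E_\R(u)>0$. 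Taking the supremum over $u$ on the left and combining with the first step yields
\[
\hat R_\R(G_wp,G_wq)\ \leq\ \delta_n\,\hat R_{\R^{(n)}}(p,q)\ \leq\ c\,\delta_n,
\]
which, after taking the supremum over $p,q\in V_\ast$, is precisely the asserted bound.

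The only non-routine point is verifying that Lemma~\ref{lem41} applies to every shifted sequence $\R^{(n)}$ with a single constant $c$ independent of $n$. This is exactly what the regularity assumption buys us: the estimates in the proof of Lemma~\ref{lem41} use only $\lambda_i\leq\lambda^\ast$, $\rho_e^i\leq\rho^\ast$, and the fixed boundary resistances $r_0$, all of which are preserved under shifting. Compatibility of the approximating forms then transports the finite-level diameter bound on each $V_m$ to the limit metric $\hat R_{\R^{(n)}}$ on $V_\ast$, after which the rescaling step is purely formal.
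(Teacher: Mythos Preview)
Your proof is correct and follows essentially the same route as the paper: apply the rescaling inequality $\hat\E_\R(u)\geq\delta_n^{-1}\hat\E_{\R^{(n)}}(u\circ G_w)$ from Lemma~\ref{lemrescalinghat}, then bound the resulting $\hat R_{\R^{(n)}}$-distance by invoking Lemma~\ref{lem41} for the shifted sequence $\R^{(n)}$, noting that the constant there depends only on $\lambda^\ast,\rho^\ast,r_0$ and hence is uniform in $n$. The paper's argument is organized in the same two steps; your write-up is slightly more explicit about why the constant is uniform under shifting, but there is no substantive difference.
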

\begin{proof}
From the rescaling we immediately get for all $w\in\mathcal{A}^n$:
\begin{align*}
\frac 1{\delta_n}\hat{\mathcal{E}}_{\mathcal{R}^{(n)}}(u\circ G_w) \leq \hat{\mathcal{E}}_{\mathcal{R}}(u)
\end{align*}
Let $p,q\in G_w(V_\ast)$, that means there exist $x,y\in V_\ast$ such that $p=G_w(x)$ and $q=G_w(y)$. For $u\in\hat{\mathcal{F}}_\R$
\begin{align*}
\frac{|u(p)-u(q)|^2}{\hat{\mathcal{E}}_\mathcal{R}(u)}\leq \delta_n \frac{|u(G_w(x))-u(G_w(y))|^2}{\hat{\mathcal{E}}_{\mathcal{R}^{(n)}}(u\circ G_w)}\leq \delta_n \hat R_{\mathcal{R}^{(n)}}(x,y)
\end{align*}
Since $x,y\in V_\ast$ there exists $k\in\mathbb{N}$ with $x,y\in V_k$. Then the effective resistance between $x$ and $y$ can be calculated with the effective resistance on the graph $(V_k,E_k)$ with the resistance function that belongs to the sequence $\mathcal{R}^{(n)}$. From Lemma~\ref{lem41} we know that there is a constant $c$ that only depends on $\lambda^\ast$, $ \rho^\ast$ and $r_0$, and thus it is valid for $\mathcal{R}^{(n)}$ for all $n$.
\begin{align*}
\hat R_{\mathcal{R}^{(n)}}(x,y)=R_{\R^{(n)},k}(x,y)\leq c
\end{align*}
This leads to
\begin{align*}
\frac{|u(p)-u(q)|^2}{\hat{\mathcal{E}}_\mathcal{R}(u)}\leq \delta_n c, \ \forall u \in\hat{\mathcal{F}}_\mathcal{R}
\end{align*}
Taking the supremum over all $u\in \hat{\mathcal{F}}_\mathcal{R}$ leads to $\hat R_{\mathcal{R}}(p,q)\leq \delta_n c$. This holds for all $p,q\in G_w(V_\ast)$ which gives us the desired result.
\end{proof}

This means the diameter of $n$-cells goes to $0$ for smaller cells (small in terms of big $n$). This is very important to compare Cauchy sequences. Roughly: Cauchy sequences have to be in smaller getting cells (or in some fixed $C_k$). The diameter of small $n$-cells (i.e. big $n$) goes to zero for either resistance and also Euclidean metric. We now want to give an exact proof of this fact.
\begin{lemma} \label{lem44}
The topology of $V_\ast$ is the same with either resistance metric $\hat{R}_\mathcal{R}$ or Euclidean metric. 
\end{lemma}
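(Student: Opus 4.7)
The plan is to show that the identity map $\mathrm{id}\colon V_\ast \to V_\ast$ is a homeomorphism between the two metrics. Since both topologies are first countable, it suffices to exhibit, at each $x \in V_\ast$, a common neighborhood basis. The starting point is that the diameter of an $n$-cell goes to zero in both metrics: Lemma~\ref{lem43} gives $\operatorname{diam}(G_w(V_\ast),\hat R_\R) \leq c_R\delta_n$, and since $G_w$ is a similitude of ratio $\leq (\alpha\max_i r_i)^n$ one has $\operatorname{diam}(G_w(K),d_E) \leq (\alpha\max_i r_i)^n \operatorname{diam}(K)$; both bounds decay geometrically since $\delta_n \leq (\lambda^\ast)^n$ and $\alpha\max_i r_i < 1$.

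Next I split into two cases. If $x \in C_\ast$ is a critical point of some level $k$, then $x$ is isolated in $V_\ast$ in both metrics: the nearest vertex of $V_\ast$ distinct from $x$ sits either at the far end of a connecting segment attached to $x$ (of fixed positive Euclidean length, and reached by an edge of resistance $\delta_{k-1}\rho^k_e > 0$) or in a cell disjoint from $x$ and thus at positive distance in both metrics, so $\{x\}$ is open around $x$. If $x \in P_\ast$, then the pairwise disjointness of the stretched copies $G_i(\Sigma)$ produces a unique nested chain of cells $\Sigma_{w(n)} \ni x$ with $|w(n)|=n$, and I take as candidate basis
\begin{align*}
U_n(x) := G_{w(n)}(V_\ast).
\end{align*}
By the two diameter bounds $U_n(x)$ shrinks to $\{x\}$ in both metrics, so every Euclidean or resistance ball around $x$ eventually contains some $U_n(x)$.

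For the reverse direction I must show that each $U_n(x)$ contains an open ball around $x$ in each metric. On the Euclidean side this is routine: the complement $V_\ast \setminus U_n(x)$ is supported on the finitely many level-$n$ cells disjoint from $G_{w(n)}(K)$ together with the finitely many critical points of level $\leq n$ outside $G_{w(n)}(K)$, all at positive Euclidean distance from $x$. On the resistance side I use that any vertex $y \in V_m \setminus U_n(x)$ (for $m \geq n$) must be connected to $x$ only through one of the finitely many boundary vertices of $G_{w(n)}$; each such boundary vertex is at strictly positive $R_{\R,n}$-distance from $x$, and by compatibility of the sequence $(\hat{\E}_{\R,m})_{m\geq 0}$ this lower bound persists in the limit $\hat R_\R$.

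The main obstacle, as I see it, is dealing with the critical points $\zeta \in C_{k,k}$ of high level $k$ that may cluster arbitrarily close to $x \in \Sigma$ in Euclidean metric: one has to rule out that such a $\zeta$ spoils the resistance-side containment above. The key observation is that Euclidean-closeness forces $\zeta$ to sit inside one of the $(k-1)$-cells containing $x$, and then Lemma~\ref{lem43} combined with the single extra connecting edge yields $\hat R_\R(x,\zeta) \leq c_R\delta_{k-1} + \delta_{k-1}\rho^\ast \to 0$ as $k\to\infty$; hence Euclidean-close critical points are automatically resistance-close. Putting all three estimates together, $\{U_n(x)\}_n$ serves as a common neighborhood basis at every $x \in V_\ast$, and the two topologies coincide.
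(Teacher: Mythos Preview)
Your overall strategy---building a common neighborhood basis $\{U_n(x)\}$ at each point, using that cell diameters shrink in both metrics---is essentially the same as the paper's (which phrases the same content as ``same Cauchy sequences''). The key ingredients are identical: Lemma~\ref{lem43} for shrinking diameters, and positive separation of the $n$-cells in the resistance metric.

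There is, however, a genuine gap in your resistance-side argument for the containment $B_{\hat R_\R}(x,\epsilon_n)\subset U_n(x)$. You write that any $y\in V_m\setminus U_n(x)$ ``must be connected to $x$ only through one of the finitely many boundary vertices of $G_{w(n)}$; each such boundary vertex is at strictly positive $R_{\R,n}$-distance from $x$.'' Two problems: (i) the point $x\in P_\ast$ may itself \emph{be} one of these boundary vertices (e.g.\ $x=G_{w^{c,l}}(q^c_l)$ is an endpoint of a connecting edge), in which case the distance from $x$ to that boundary vertex is zero; (ii) even when all boundary vertices are at distance $\geq\epsilon$ from $x$, it does not follow in general that every $y$ on the far side of the cutset satisfies $\hat R_\R(x,y)\geq\epsilon$---effective resistance does not decompose through cutsets of size greater than one in the way your phrasing suggests. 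The paper closes this gap cleanly by producing an explicit test function: set $u\equiv 1$ on $G_w(V_\ast)$ and $u\equiv 0$ elsewhere; since only finitely many connecting edges cross the boundary, $\hat\E_\R(u)<\infty$, whence $\hat R_\R(x,y)\geq 1/\hat\E_\R(u)>0$ uniformly for $x$ inside and $y$ outside. This is the missing step you should supply.

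Your ``main obstacle'' paragraph is also off target. You argue that a high-level critical point $\zeta$ which is Euclidean-close to $x$ is also resistance-close. But that is not what is needed for the containment $B_{\hat R_\R}(x,\epsilon_n)\subset U_n(x)$: what you need is that every $\zeta\notin U_n(x)$ is resistance-\emph{far} from $x$. In fact there is no difficulty with high-level critical points at all: any $\zeta\in C_{k,k}$ with $k>n$ lies in some $n$-cell $G_v(K)$ with $|v|=n$, so either $v=w(n)$ and $\zeta\in U_n(x)$, or $v\neq w(n)$ and the test-function argument above applies. Once you replace the boundary-vertex heuristic by the indicator test function, the final paragraph can be dropped entirely.
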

\begin{proof} We show that $(V_\ast,\hat{R}_\mathcal{R})$ and $(V_\ast,|\cdot|_e)$ have the same Cauchy sequences.\\

First let $(x_i)_{i\geq 1}$ be a Cauchy sequence with respect to the Euclidean metric $|\cdot|_e$ in $V_\ast$. Then there are two possibilities:\\

Since $P_n$ and $C_n$ are positively separated w.r.t. $|\cdot|_e$, there is either an $i_0\geq 1$ with $x_i=x\in C_n$ for all $i\geq i_0$ or there is a $w\in\mathcal{A}^\ast$ $(w=(w_1w_2\cdots))$ with $\forall m \exists i_m\geq 1$ such that $\forall i\geq i_m: x_i\in G_{w_1\cdots w_m}(V_\ast)$.

In fact, this is only true since the $n$-cells are also positively separated. This is due to the stretching and it is not true in the self-similar case!\\

In the first case it is obviously also a Cauchy sequence with respect to the resistance metric. Let's therefore look at the second case. \\

From Lemma~\ref{lem43} we know that $\operatorname{diam}(G_{w_1\cdots w_m}(V_\ast),\hat{R}_{\mathcal{R}})\leq \delta_m c \rightarrow 0$. Therefore, we have for all $k,l\geq i_m$ that $\hat{R}_{\mathcal{R}}(x_k,x_l)\leq \delta_m c$ which makes $(x_i)_{i\geq 1}$ a Cauchy sequence w.r.t. the resistance metric.\\

Now take any Cauchy sequence $(x_i)_{i\geq 1}$ with respect to the resistance metric $\hat{R}_\mathcal{R}$. We have
\begin{align*}
V_\ast=\sum_{w\in\mathcal{A}^n}G_w(V_\ast)\dot\cup C_n
\end{align*}
For $w\in\mathcal{A}^n$ define
\begin{align*}
u&\equiv 1, \ \text{on } G_w(V_\ast)\\
u&\equiv 0, \ \text{on } G_w(V_\ast)^c
\end{align*}
We can easily see that $u\in \hat{\mathcal{F}}_\mathcal{R}$. and thus
\begin{align*}
\hat{R}_{\mathcal{R}}(x,y)\geq \frac{|u(x)-u(y)|^2}{\hat{\mathcal{E}}_\mathcal{R}(u)}=\frac 1{\hat{\mathcal{E}}_\mathcal{R}(u)}>0
\end{align*}
for all $x\in G_w(V_\ast)$ and $y\in G_w(V_\ast)^c$. Therefore
\begin{align*}
\inf\{\hat{R}_{\mathcal{R}}(x,y) \ : \ x\in G_w(V_\ast), y \in C_n\}&>0 \\
\text{and also } \ \inf\{\hat{R}_{\mathcal{R}}(x,y) \ : \ x\in G_w(V_\ast), y \in G_{\tilde w}(V_\ast)\}&>0
\end{align*}
Since we have only finitely many $n$-cells we can even find a common bound for all $n$-cells. This means, that the $n$-cells are positively separated w.r.t. to $\hat{R}_\mathcal{R}$ and also positively separated away from $C_n$. We can therefore use the same argument as before: There is either an $x$ in some $C_n$ such that $(x_i)_{i\geq 1}$ gets trapped in $x$ or we have smaller getting cells where all but finitely many $x_i$ lie. In either case $(x_i)_{i\geq 1}$ is also a Cauchy sequence w.r.t. the Euclidean metric. 
\end{proof}
Due to this Lemma we know that $\overline V_\ast=\Sigma\cup C_\ast$ where the closure is taken with respect to the resistance metric $\hat R_\R$ if $\R$ is a regular sequence of harmonic structures. If $\R$ is not regular we are not able to prove this result. In this case it could happen that $\overline V_\ast$ is a proper subset of $\Sigma\cup C_\ast$ and thus we don't get a resistance form on $\Sigma\cup C_\ast$. 

This is an analogy to the self-similar case (compare \cite[Prop. 20.7]{kig12}). Therefore, the choice of the terms \textit{regular} and \textit{harmonic structure} is justified.
\subsection{Resistance form on $\protect K$}
Until now we have resistance forms on $\Sigma\cup C_\ast$. However, we want to have resistance forms on $K$, that means we need to substitute the differences along the edges that represent connecting lines with some form that considers all values of $u$ along this line and not just the endpoints. For these one-dimensional lines we can use the usual Dirichlet energy. \\

Consider the edges in $E_1^I$. These have a one-to-one correspondence with the connecting lines $e_{c,l}$. For $\{x,y\}\in E_1^I$ we know that $x$ and $y$ are the endpoints of $e_{c,l}$ and thus define $\xi_{e_{c,l}}(t):=\xi_{xy}(t):=tx+(1-t)y$, for $t\in [0,1]$. That means $u\circ \xi_{xy}$ maps $u|_{e_{c,l}}$ to be a function on $[0,1]$. Look at the Dirichlet energy on this line
\begin{align*}
\mathcal{D}_{e_{c,l}}(u):=\mathcal{D}_{xy}(u):=\int_0^1 \left(\frac{d(u\circ \xi_{xy})}{dz}\right)^2dz
\end{align*}
This can be defined if $u|_{e_{c,l}}\circ \xi_{e_{c,l}}$ is in $H^1[0,1]$. We see that this doesn't depend on the orientation of $\xi_{xy}$, therefore, the choice of endpoints of $e_{c,l}$ is not important.\\

For the first summand of the quadratic form we need to sum over all edges in $E_1^I$. 
\begin{align*}
\mathcal{D}_{\boldsymbol\rho}(u):=\sum_{\{x,y\}\in E_1^I} \frac 1{\rho_{\{x,y\}}} \mathcal{D}_{xy}(u)
\end{align*}
Now we define the quadratic form $\mathcal{E}_\mathcal{R}$ that will replace $\hat{\mathcal{E}}_\mathcal{R}$ similar to its definition:
\begin{align*}
\mathcal{E}^I_{\mathcal{R},n}(u):=\sum_{k=1}^n \frac 1{\gamma_k} \underbrace{\sum_{w\in\mathcal{A}^{k-1}} \mathcal{D}_{\boldsymbol \rho^k}(u\circ G_w)}_{\mathcal{D}_{\boldsymbol \rho^k,k}(u):=}
\end{align*}
and the whole form
\begin{align*}
\mathcal{E}_{\mathcal{R},n}(u):=\mathcal{E}_{\mathcal{R},n}^\Sigma(u)+\mathcal{E}_{\mathcal{R},n}^I(u)
\end{align*}

Again we want to define a limit of these quadratic forms but we don't know if this is well defined. We introduce some further notation. 
\begin{align*}
H^1(e^w_{c,l}):=\{u|u: e^w_{c,l}\rightarrow \mathbb{R},  \ u\circ \xi_{e^w_{c,l}} \in H^1[0,1]\}
\end{align*}
\begin{lemma}\label{lem45}
$(\mathcal{E}_{\mathcal{R},n}(u))_{n\geq 0}$ is non-decreasing for $u\in C(K)$ with $u|_{e^w_{c,l}}\in H^1(e^w_{c,l})$ for all $c\in \C, l\in\{1,\ldots,\rho(c)\}$ and $w\in\A^\ast_0$.
\end{lemma}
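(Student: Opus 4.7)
The plan is to write out the difference $\mathcal{E}_{\mathcal{R},n+1}(u) - \mathcal{E}_{\mathcal{R},n}(u)$ as the $\Sigma$-contribution plus the single new $I$-contribution at level $n+1$:
\begin{align*}
\mathcal{E}_{\mathcal{R},n+1}(u) - \mathcal{E}_{\mathcal{R},n}(u) = \bigl[\mathcal{E}^\Sigma_{\mathcal{R},n+1}(u) - \mathcal{E}^\Sigma_{\mathcal{R},n}(u)\bigr] + \frac{1}{\gamma_{n+1}}\mathcal{D}_{\boldsymbol\rho^{n+1}, n+1}(u).
\end{align*}
The second summand is manifestly non-negative, but this alone is not enough, because the $\Sigma$-difference can a priori be negative: the discrete fractal form on $V_{n+1}$ is not the trace of the one on $V_n$, since the harmonic structure balances the fractal edges against the connecting edges in $E_{n+1}^I$. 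The idea is therefore to sacrifice part of the new Dirichlet integral in order to supply enough discrete energy on $V_{n+1}$ to invoke the compatibility of the harmonic structures.

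First I would apply the one-dimensional Cauchy--Schwarz (or Jensen) inequality on each segment $e^w_{c,l}$: for $u$ continuous on $K$ with $u|_{e^w_{c,l}}\in H^1(e^w_{c,l})$ and endpoints $x,y$, one has $\mathcal{D}_{xy}(u\circ G_w)\geq (u(G_w x)-u(G_w y))^2$. Summing this over $\{x,y\}\in E_1^I$ and $w\in\A^{n}$ with the weights $1/\rho^{n+1}_{\{x,y\}}$ and $1/\gamma_{n+1}$ yields
\begin{align*}
\frac{1}{\gamma_{n+1}}\mathcal{D}_{\boldsymbol\rho^{n+1},n+1}(u) \;\geq\; \frac{1}{\gamma_{n+1}} Q^I_{\boldsymbol\rho^{n+1},n+1}\bigl(u|_{V_{n+1}}\bigr).
\end{align*}
Crucially, since $Q^I_{\boldsymbol\rho^k,k}$ depends only on values in $V_k\subseteq V_{n+1}$ for $k\leq n$, the right-hand side is exactly the increment $\hat{\mathcal{E}}^I_{\mathcal{R},n+1}(u|_{V_{n+1}}) - \hat{\mathcal{E}}^I_{\mathcal{R},n}(u|_{V_n})$ of the discrete line part.

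Next I would invoke the compatibility of $(\hat{\mathcal{E}}_{\mathcal{R},n},\ell(V_n))_{n\geq 0}$ stated earlier in the chapter, which gives $\hat{\mathcal{E}}_{\mathcal{R},n+1}(u|_{V_{n+1}}) \geq \hat{\mathcal{E}}_{\mathcal{R},n}(u|_{V_n})$. Splitting into $\Sigma$- and $I$-parts and using the identification of the $I$-increment above, together with $\mathcal{E}^\Sigma_{\mathcal{R},k}(u) = \hat{\mathcal{E}}^\Sigma_{\mathcal{R},k}(u|_{V_k})$, this rearranges to
\begin{align*}
\mathcal{E}^\Sigma_{\mathcal{R},n+1}(u) + \frac{1}{\gamma_{n+1}} Q^I_{\boldsymbol\rho^{n+1},n+1}(u|_{V_{n+1}}) \;\geq\; \mathcal{E}^\Sigma_{\mathcal{R},n}(u).
\end{align*}
Combining with the Jensen estimate from the previous paragraph yields $\mathcal{E}^\Sigma_{\mathcal{R},n+1}(u) + \frac{1}{\gamma_{n+1}}\mathcal{D}_{\boldsymbol\rho^{n+1},n+1}(u)\geq \mathcal{E}^\Sigma_{\mathcal{R},n}(u)$, i.e.\ $\mathcal{E}_{\mathcal{R},n+1}(u)\geq \mathcal{E}_{\mathcal{R},n}(u)$. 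The only conceptual obstacle is the initial recognition that the $\Sigma$-part is not monotone in isolation, so one has to couple it to the fresh Dirichlet term via the elementary continuous-to-discrete bound; once that coupling is in place, the rest is bookkeeping with the definitions.
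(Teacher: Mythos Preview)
Your proof is correct and is essentially the same as the paper's. The paper writes the harmonic-structure inequality $Q_{r_0}^\Sigma(u)\leq \lambda_{n+1}^{-1}\sum_{i\in\mathcal{A}}Q_{r_0}^\Sigma(u\circ G_i)+Q_{\boldsymbol\rho^{n+1}}^I(u)$ directly, sums over $w\in\mathcal{A}^n$, and then replaces $Q^I$ by $\mathcal{D}$ via the one-dimensional Cauchy--Schwarz bound; you invoke the already-stated compatibility of $(\hat{\mathcal{E}}_{\mathcal{R},n})_{n\geq 0}$ (which is the same inequality after summing) and then couple it with the same continuous-to-discrete bound. The two arguments are the same up to the order of presentation.
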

\newpage \begin{proof}
\textit{(1.)} 
\begin{align*}
\hat{\E}^I_{\mathcal{R},n}(u)\leq \E^I_{\R,n}(u), \ \forall u
\end{align*}
This is true since $(u(1)-u(0))^2\leq \int_0^1 \left(\frac{du}{dx}\right)^2 dx$ for all $u\in H^1[0,1]$. This is the only difference between $\hat{\E}^I_{\mathcal{R},n}$ and $ \E^I_{\R,n}$. The sums and prefactors are the same. This holds in general for $Q_{\boldsymbol\rho}^I(u)\leq \mathcal{D}_{\boldsymbol\rho}(u)$.\\[.2cm]
\textit{(2.)}
\begin{align*}
\E_{\R,n}(u)\leq \E_{\R,n+1}(u), \ \forall u
\end{align*}
Since $(\lambda_{n+1},\boldsymbol\rho^{n+1})$ is a harmonic structure we have
\begin{align*}
Q_{r_0}^\Sigma(u)&\leq \frac 1{\lambda_{n+1}} \sum_{i\in \mathcal{A}}Q_{r_0}^\Sigma(u\circ G_i)+Q_{\boldsymbol \rho^{n+1}}^I(u)\\
\Rightarrow \sum_{w\in\A^n}Q_{r_0}^\Sigma(u\circ G_w)&\leq \frac 1{\lambda_{n+1}} \sum_{w\in \A^{n+1}}Q_{r_0}^\Sigma(u\circ G_w)+\sum_{w\in\A^n}Q_{\boldsymbol \rho^{n+1}}^I(u\circ G_w)
\end{align*}
Applying $(1.)$ for $Q_{\boldsymbol\rho^{n+1}}^I$ and multiplying by $\frac 1{\delta_n}=\frac 1{\gamma_{n+1}}$ on both sides we get
\begin{align*}
\E_{\R,n}^\Sigma(u)\leq \E_{\R,n+1}^\Sigma(u)+\frac 1{\gamma_{n+1}}\sum_{w\in\A^n}\D_{\boldsymbol\rho^{n+1}}(u\circ G_w)
\end{align*}
Now if we add $\sum_{k=1}^n\frac 1{\gamma_k} \D_{\boldsymbol\rho^k,k}(u)$ on both sides we get the desired result.
\end{proof}
We see that taking the limit is a well defined object in $[0,\infty]$ and, therefore, write 
\begin{align*}
\mathcal{E}_{\mathcal{R}}(u):=\lim_{n\rightarrow\infty} \mathcal{E}_{\mathcal{R},n}(u)
\end{align*}
and we define the domain as the functions with finite energy.
\begin{align*}
\mathcal{F}_\mathcal{R}:=\left\{u\ | \ u \in C(K), \ \begin{array}{l} u|_{e^w_{c,l}}\in H^1(e^w_{c,l})\ \forall c\in\C, l\in\{1,\ldots,\rho(c)\},w\in \A^\ast_0, \\  \lim_{n\rightarrow \infty}\mathcal{E}_{\mathcal{R},n}(u)<\infty\end{array}\right\}
\end{align*}

This form fulfills the same rescaling as $\hat{\E}_\R$:
\begin{lemma}[Rescaling of $\E_\R$] \label{lemrescaling}For all $u\in\F_\R$ and $w\in\A^n$ we have $u\circ G_w\in\F_{\R^{(n)}}$ and
\begin{align*}
\E_\R(u)=\sum_{w\in\A^n}\frac 1{\delta_n} \E_{\R^{(n)}}(u\circ G_w) + \sum_{k=1}^n \frac 1{\gamma_k} \D_{\boldsymbol{\rho}^k,k}(u)
\end{align*}
\end{lemma}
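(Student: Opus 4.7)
The plan is to mirror the proof of Lemma~\ref{lemrescalinghat} almost verbatim, since the algebraic structure of $\E^\Sigma_{\R,n+m}$ and $\E^I_{\R,n+m}$ as sums over words is identical to that of $\hat{\E}^\Sigma_{\R,n+m}$ and $\hat{\E}^I_{\R,n+m}$: the only change is that the endpoint quadratic forms $Q^I_{\boldsymbol\rho^k}$ are replaced by the Dirichlet energies $\D_{\boldsymbol\rho^k}$. The manipulation in Lemma~\ref{lemrescalinghat} never used the explicit form of $Q^I_{\boldsymbol\rho^k}$ but only its behavior under pre-composition with $G_w$, and $\D_{xy}$ behaves the same way since $G_w$ acts affinely on each connecting line.

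First I would check that $u\circ G_w\in \F_{\R^{(n)}}$ for every $w\in\A^n$. Continuity is immediate since $G_w:K\to K$ is continuous and $u\in C(K)$. For the $H^1$-condition, $G_w$ maps each line $e^{\tilde w}_{c,l}$ affinely onto $e^{w\tilde w}_{c,l}$, so pre-composing an $H^1$-function on $e^{w\tilde w}_{c,l}$ by $G_w$ again yields an $H^1$-function. Finiteness of $\E_{\R^{(n)}}(u\circ G_w)$ will drop out of the identity once it is established at finite level.

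For the identity itself I would write $\E_{\R,n+m}(u)=\E^\Sigma_{\R,n+m}(u)+\E^I_{\R,n+m}(u)$ and factor words of length $n+m$ as $w\tilde w$ with $w\in\A^n$, $\tilde w\in\A^m$ in the $\Sigma$-sum to obtain
\begin{align*}
\E^\Sigma_{\R,n+m}(u)=\sum_{w\in\A^n}\frac 1{\delta_n}\,\E^\Sigma_{\R^{(n)},m}(u\circ G_w).
\end{align*}
In the $k$-sum defining $\E^I_{\R,n+m}(u)$ I would split off $k\leq n$ and $k>n$. The first piece produces the boundary term $\sum_{k=1}^n \tfrac 1{\gamma_k}\D_{\boldsymbol\rho^k,k}(u)$. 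For the second piece, the reindexing $j=k-n$, the identity $\gamma_k=\delta_n\,\gamma_j^{(n)}$ and the factorisation $\D_{\boldsymbol\rho^k,k}(u)=\sum_{w\in\A^n}\D_{\boldsymbol\rho^k,j}(u\circ G_w)$ transform it into $\sum_{w\in\A^n}\tfrac 1{\delta_n}\E^I_{\R^{(n)},m}(u\circ G_w)$. Letting $m\to\infty$ and invoking the monotonicity from Lemma~\ref{lem45} on both sides yields the desired identity and simultaneously shows that each $\E_{\R^{(n)}}(u\circ G_w)$ is finite, so $u\circ G_w\in\F_{\R^{(n)}}$.

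The main obstacle, if any, is purely bookkeeping: one must carefully match the prefactors $\tfrac 1{\gamma_k}$ for $k>n$ with $\tfrac 1{\delta_n\gamma^{(n)}_{k-n}}$ and keep track of the boundary case $k=n+1$, where $\gamma^{(n)}_1=1$ reproduces the convention used in the definition of $\E^I_{\R^{(n)},m}$. These are exactly the same combinatorial issues that were handled in Lemma~\ref{lemrescalinghat}, so no new idea is needed beyond the replacement of $Q^I$ by $\D$.
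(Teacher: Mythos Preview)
Your proposal is correct and takes exactly the same approach as the paper, whose proof consists of the single line ``This works exactly the same as for $\hat{\E}_\R$ in Lemma~\ref{lemrescalinghat}.'' You actually supply more detail than the paper does, in particular the explicit checks that $u\circ G_w$ is continuous, that the $H^1$-condition on the lines is preserved under the affine maps $G_w$, and the appeal to Lemma~\ref{lem45} to justify the passage $m\to\infty$; none of this is spelled out in the paper but all of it is correct.
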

\begin{proof}This works exactly the same as for $\hat{\E}_\R$ in Lemma~\ref{lemrescalinghat}.
\end{proof}
We now want to show that $(\E_\R,\F_\R)$ is indeed a resistance form.
\begin{theorem}\label{theoresform} Let $\R=(\lambda_i,\boldsymbol{\rho}^i)_{i\geq 1}$ be a regular sequence of harmonic structures on $K$. Then $(\E_\R,\F_\R)$ is a resistance form on $K$ where the associated resistance metric is inducing the same topology as the Euclidean metric.
\end{theorem}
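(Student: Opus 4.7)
The plan is to transfer the resistance form $(\hat{\E}_\R,\hat{\F}_\R)$ on $\Sigma\cup C_\ast$, already constructed in the previous subsection, to a resistance form on all of $K$, using the elementary inequality $(u(x)-u(y))^2\leq\D_{xy}(u)$ for $u\circ\xi_{xy}\in H^1[0,1]$, with equality when $u\circ\xi_{xy}$ is affine. Two comparison facts drive the argument: (i) for every $u\in\F_\R$ the restriction $u|_{\Sigma\cup C_\ast}$ lies in $\hat{\F}_\R$ and $\hat{\E}_\R(u|_{\Sigma\cup C_\ast})\leq\E_\R(u)$; and (ii) every $g\in\hat{\F}_\R$ admits a unique extension $\tilde g\in\F_\R$ that is linear on each $e^w_{c,l}$, with $\E_\R(\tilde g)=\hat{\E}_\R(g)$, and this $\tilde g$ minimizes $\E_\R$ among continuous extensions of $g$ with $H^1$ segment restrictions.

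With (i) and (ii) I would dispatch (RF1), (RF3), (RF4) and (RF5) in short order. For (RF1) the only non-trivial point is that $\E_\R(u)=0$ forces $u$ constant: by (i) the function is constant on $\Sigma\cup C_\ast$, and each $\D_{xy}(u)=0$ forces constancy on each line segment, so continuity of $u$ and connectedness of $K$ conclude. For (RF3) and (RF4), (i) reduces everything on $\Sigma\cup C_\ast$ to the already-established axioms for $\hat{\E}_\R$ together with the uniform bound from Lemma~\ref{lem41}; the remaining case, $p$ in the interior of some $e^w_{c,l}$, follows by interpolating with an explicit tent function in $H^1[0,1]$ and invoking the triangle inequality. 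The Markov property (RF5) holds because $H^1[0,1]$-truncation never increases the Dirichlet integral, and the fractal part inherits (RF5) from each finite network $(V_n,E_n,r_n)$.

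The main obstacle is completeness (RF2). Given an $\E_\R$-Cauchy sequence $(u_n)$, normalized at a reference point, (i) makes the restrictions to $\Sigma\cup C_\ast$ Cauchy in $(\hat{\F}_\R/{\sim},\hat{\E}_\R)$, yielding a limit $u^\ast\in\hat{\F}_\R$. Simultaneously, on every segment $e^w_{c,l}$ the sequence $u_n\circ\xi_{xy}$ is Cauchy in $H^1[0,1]/{\sim}$, hence converges in $H^1$ and, by Sobolev embedding, uniformly to a continuous limit whose endpoint values coincide with those of $u^\ast$. Patching yields $u\in C(K)$ with $u|_{\Sigma\cup C_\ast}=u^\ast$ and $u|_{e^w_{c,l}}\in H^1(e^w_{c,l})$; Fatou applied to the increasing family $\E_{\R,n}$ then places $u$ in $\F_\R$ and delivers $\E_\R(u_n-u)\to 0$. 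The delicate point is that infinitely many segments contribute simultaneously, which is controlled uniformly by the regularity assumption $\rho^\ast<\infty$ and the rescaling identity of Lemma~\ref{lemrescaling}.

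For the topology statement, I would adapt Lemma~\ref{lem44}. Every connecting segment at level $k$ has total resistance $\delta_{k-1}\rho_e^k\leq\delta_n\rho^\ast$ whenever $k\geq n+1$, so any point in $K_w$ with $|w|=n$ lies within resistance distance $\delta_n\rho^\ast$ of some $V_\ast$-point in $K_w$. Combined with Lemma~\ref{lem43} this yields $\operatorname{diam}(K_w,R_{\E_\R})\leq\delta_n(c+2\rho^\ast)\to 0$, and also $\operatorname{diam}(K_w,|\cdot|_e)\to 0$ by the analogous Euclidean estimate. Positive Euclidean separation of distinct $n$-cells together with positive resistance-metric separation, proved by the same indicator-type test functions as in Lemma~\ref{lem44} and extended to $K$ by the interpolation (ii), implies that the Cauchy sequences agree, so the two metrics induce the same topology on $K$.
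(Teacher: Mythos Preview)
Your approach is essentially the paper's, and the argument is correct in spirit, but the organization differs in one place that creates a small gap.

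For (RF2) you construct the limit by patching: first take the $\hat{\E}_\R$-limit $u^\ast$ on $\Sigma\cup C_\ast$, then the $H^1$-limit on each segment, and glue. You assert that ``patching yields $u\in C(K)$'', but this is not automatic. The segment limits match $u^\ast$ at the two endpoints, and $u^\ast$ is continuous on $\Sigma\cup C_\ast$, yet a point $x\in\Sigma\setminus C_\ast$ is an accumulation point of infinitely many segments of arbitrarily high level, and continuity of the glued function at such $x$ does not follow from piecewise continuity and endpoint matching alone. What is really needed is a uniform modulus, i.e.\ a bound $|u(x)-u(y)|^2\le c\,\E_\R(u)$ valid for all $x,y\in K$ with a single constant $c$. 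The paper isolates exactly this as Lemma~\ref{lem48} (combining the $\hat{\E}_\R$-bound on $\overline V_\ast$ with the segment bound $\le\rho^\ast\E_\R(u)$), and then (RF2) is immediate: an $\E_\R$-Cauchy sequence normalized at a point is uniformly Cauchy on $K$, so the limit is automatically continuous, and your Fatou step finishes. You have all the ingredients for this uniform bound scattered through your (RF3)/(RF4) paragraph and your topology paragraph; simply extracting it as a lemma before (RF2), as the paper does, closes the gap and shortens the argument considerably.

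For the topology statement, both approaches work. You argue via matching Cauchy sequences, mirroring Lemma~\ref{lem44}; the paper instead shows directly that the identity $(K,|\cdot|_e)\to(K,R_\R)$ is continuous by a three-case analysis (interior of a segment, point of $C_\ast$, point of $\Sigma$) and then invokes compactness of $(K,|\cdot|_e)$ to conclude that the inverse is continuous as well. The compactness route is slightly shorter since it avoids re-proving positive $R_\R$-separation of $n$-cells.
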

In order to show Theorem~\ref{theoresform} we have to show (RF1)--(RF5) of Definition~\ref{defires}.
\begin{lemma}\label{lem48} There is a constant $c$ only depending on $\lambda^\ast$, $\rho^\ast$ and $r_0$ such that we have for all $u\in\F_\R$ and $x,y\in K$
\begin{align*}
|u(x)-u(y)|^2\leq c \E_\R(u)
\end{align*}
\end{lemma}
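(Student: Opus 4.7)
The plan is a three-stage argument: first bound $|u(x)-u(y)|^2$ for $x,y \in V_\ast$, then extend to $\Sigma \cup C_\ast$ by continuity, and finally handle points in the interior of connecting lines. The starting point is the pointwise comparison $\hat\E_\R \le \E_\R$ on $V_\ast$. Since for every $h \in H^1[0,1]$ one has $(h(1)-h(0))^2 \le \int_0^1 h'(z)^2\,dz$, it follows edge-by-edge that $Q_{\boldsymbol\rho^k}^I \le \D_{\boldsymbol\rho^k}$ when applied to $u \circ G_w$; combined with the equality of the $\Sigma$-parts this gives $\hat\E_{\R,n}(u|_{V_n}) \le \E_{\R,n}(u)$ for every $n$, so by passing to the limit $\hat\E_\R(u|_{V_\ast}) \le \E_\R(u)$ for all $u \in \F_\R$, and in particular $u|_{V_\ast} \in \hat\F_\R$.

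For $x,y \in V_n$, the defining property of the resistance metric on $V_n$ gives $|u(x)-u(y)|^2 \le R_{\R,n}(x,y)\,\hat\E_{\R,n}(u|_{V_n})$, and Lemma~\ref{lem41} bounds $R_{\R,n}(x,y) \le 2\tilde C$ with $\tilde C$ depending only on $\lambda^\ast$, $\rho^\ast$, $r_0$. Combined with the comparison above, $|u(x)-u(y)|^2 \le 2\tilde C\,\E_\R(u)$ for all $x,y \in V_\ast$. Since $u \in C(K)$ and $V_\ast$ is dense in $\overline V_\ast = \Sigma \cup C_\ast$ (Lemma~\ref{lem44} ensures the Euclidean topology inherited from $K$ is the right one for taking closures), this bound extends by continuity to all $x,y \in \Sigma \cup C_\ast$.

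For a point $z$ lying in the interior of a connecting line $e_{c,l}^w$, I parameterize the line via $G_w \circ \xi_{xy}$, where $\{x,y\} \in E_1^I$ is the corresponding edge. Cauchy--Schwarz applied to the fundamental theorem of calculus yields $|u(z)-u(G_w(x))|^2 \le \D_{xy}(u\circ G_w)$, and since
\[
\E_\R(u) \;\ge\; \frac{1}{\gamma_{|w|+1}\,\rho^{|w|+1}_{\{x,y\}}}\,\D_{xy}(u\circ G_w)
\]
with $\gamma_k \le 1$ and $\rho^k_e \le \rho^\ast$, we obtain $|u(z)-u(G_w(x))|^2 \le \rho^\ast\,\E_\R(u)$; note $G_w(x) \in V_\ast \subset \Sigma \cup C_\ast$. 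For arbitrary $x,y \in K$ I now pick $x_0,y_0 \in \Sigma \cup C_\ast$ with $|u(x)-u(x_0)|^2, |u(y)-u(y_0)|^2 \le \rho^\ast\,\E_\R(u)$ (taking $x_0=x$ when $x \in \Sigma \cup C_\ast$ and otherwise an endpoint of the line containing $x$) and combine via $(a+b+c)^2 \le 3(a^2+b^2+c^2)$ to conclude with $c = 6\rho^\ast + 6\tilde C$. The only nontrivial ingredient is the comparison $\hat\E_\R \le \E_\R$; everything else reduces to Lemmas~\ref{lem41} and~\ref{lem44} together with the standard one-dimensional $H^1$ estimate.
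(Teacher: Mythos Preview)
Your proof is correct and follows essentially the same route as the paper's: first establish $\hat{\E}_\R(u|_{V_\ast}) \le \E_\R(u)$ edge-by-edge, use the resistance-metric diameter bound from Lemma~\ref{lem41} to control differences on $V_\ast$ (extended to $\overline{V}_\ast = \Sigma \cup C_\ast$), then handle interior points of connecting lines via the one-dimensional $H^1$ estimate and the bound $\gamma_k\rho^k_e \le \rho^\ast$. The only cosmetic differences are that the paper cites Lemma~\ref{lem43} (with $w=\emptyset$) rather than Lemma~\ref{lem41} directly, and assembles the final constant by two applications of $(a+b)^2 \le 2a^2+2b^2$ (yielding $8(c_1+\rho^\ast)$) instead of your single application of $(a+b+c)^2 \le 3(a^2+b^2+c^2)$.
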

\begin{proof} We have three distinct cases:
\begin{enumerate}[(1)]
\item $x,y\in \overline V_\ast$
\item $x\in \overline V_\ast$, $y\notin \overline V_\ast$
\item $x,y\notin \overline V_\ast$
\end{enumerate}

For case (1) notice: If $u\in \F_\R$ we have that $u|_{\overline V_\ast}\in \hat \F_\R$ since
\begin{align*}
\hat\E_\R(u|_{\overline V_\ast})\leq \E_\R(u)
\end{align*}
Since $(\hat\E_\R,\hat\F_\R)$ can be extended to a resistance form on $\overline V_\ast$ we get from Lemma~\ref{lem43}
\begin{align*}
|u(x)-u(y)|^2\leq c_1 \hat \E_\R(u|_{\overline V_\ast})\leq c_1 \E_\R(u)
\end{align*}
with a constant $c_1$ only depending on $\lambda^\ast$, $\rho^\ast$ and $r_0$.\\

Now consider case (2): $x\in \overline V_\ast$ and $y\notin \overline V_\ast$:
That means $y$ is in some $e^w_{c,l}$ with $c\in\C$, $l\in\{1,\ldots,\rho(c)\}$, $w\in\A^\ast_0$ and in particular it is not one of the endpoints. Let $p$ be one of the endpoints of $e^w_{c,l}$, we may choose $p:=G_w(c)$. Then $p\in \overline V_\ast$ which means
\begin{align*}
|u(p)-u(x)|^2\leq c_1\E_\R(u)
\end{align*}
Now $y,p\in e^w_{c,l}$, the resistance of $e^w_{c,l}$ is $\gamma_{|w|}\rho^{|w|+1}_{c,l}$. We thus have
\begin{align*}
|u(y)-u(p)|^2\leq \underbrace{\gamma_{|w|}\rho^{|w|+1}_{c,l}}_{\leq \rho^\ast} \frac 1{\gamma_{|w|}\rho^{|w|+1}_{c,l}}\D_{e^w_{c,l}}(u)\leq \rho^\ast \E_\R(u)
\end{align*}
The last inequality holds since the Dirichlet energy on $e_{c,l}^w$ is only one part of the whole energy. Since $(a+b)^2\leq 2a^2+2b^2$ we get
\begin{align*}
|u(x)-u(y)|^2&=|u(x)-u(p)+u(p)-u(y)|^2\\
&\leq 2|u(x)-u(p)|^2+2|u(p)-u(y)|^2\\
&\leq 2(c_1+\rho^\ast) \E_\R(u)
\end{align*}

We can use these two cases to handle the last one (3): $x,y\notin \overline V_\ast$ \\
Choose any $p\in \overline V_\ast$, then
\begin{align*}
|u(x)-u(p)|^2\leq 2(c_1+\rho^\ast)\E_\R(u)\\
|u(y)-u(p)|^2\leq 2(c_1+\rho^\ast)\E_\R(u)
\end{align*}
and thus
\begin{align*}
|u(x)-u(y)|^2&\leq 2 |u(x)-u(p)|^2+2|u(y)-u(p)|^2\\
&\leq \underbrace{8(c_1+\rho^\ast)}_{c:=}\E_\R(u)
\end{align*}
Therefore, it holds for all $x,y\in K$ and $u\in \F_\R$:
\begin{align*}
|u(x)-u(y)|^2\leq c\E_\R(u)
\end{align*}
\end{proof}
In analogy to Lemma~\ref{lem43} we can refine these results with the help of the rescaling property.
\begin{corollary} \label{cor49} $x,y\in K_w$ with $w\in \A^n$ and $u\in \F_\R$:
\begin{align*}
|u(x)-u(y)|^2\leq c\delta_n\E_R(u)
\end{align*}
\end{corollary}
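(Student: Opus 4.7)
The plan is to mimic the argument used for Lemma~\ref{lem43}, replacing the role of $\hat{\E}_\R$ with $\E_\R$ and using the rescaling property for $\E_\R$ established in Lemma~\ref{lemrescaling} together with the uniform bound from Lemma~\ref{lem48}.

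First I would write any $x,y\in K_w$ with $w\in\A^n$ as $x=G_w(\tilde x)$ and $y=G_w(\tilde y)$ for some $\tilde x,\tilde y\in K$, which is possible since $K_w=G_w(K)$. Next, since $\R$ is a regular sequence of harmonic structures, so is the shifted sequence $\R^{(n)}=(\lambda_{n+i},\boldsymbol\rho^{n+i})_{i\geq 1}$: it uses the same $r_0$ and satisfies the same bounds $\lambda^\ast<1$ and $\rho^\ast<\infty$. Therefore Lemma~\ref{lem48} applies to $(\E_{\R^{(n)}},\F_{\R^{(n)}})$ with exactly the same constant $c$ as for $(\E_\R,\F_\R)$; this uniformity is the only subtle point, and it is immediate from how the constant in Lemma~\ref{lem48} was constructed (depending only on $\lambda^\ast$, $\rho^\ast$ and $r_0$).

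From Lemma~\ref{lemrescaling} applied to $u\in\F_\R$ we have $u\circ G_w\in\F_{\R^{(n)}}$ and
\begin{align*}
\E_\R(u) \;\geq\; \frac{1}{\delta_n}\,\E_{\R^{(n)}}(u\circ G_w),
\end{align*}
since the remaining summands on the right-hand side of the rescaling identity are non-negative. Applying Lemma~\ref{lem48} to $u\circ G_w\in\F_{\R^{(n)}}$ at the points $\tilde x,\tilde y\in K$ yields
\begin{align*}
|u(x)-u(y)|^2 \;=\; |(u\circ G_w)(\tilde x)-(u\circ G_w)(\tilde y)|^2 \;\leq\; c\,\E_{\R^{(n)}}(u\circ G_w) \;\leq\; c\,\delta_n\,\E_\R(u),
\end{align*}
which is exactly the claim.

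The only step requiring care is verifying the uniform constant for the shifted sequence; beyond that, the corollary is a direct consequence of the self-similar-style scaling already encoded in Lemma~\ref{lemrescaling} together with Lemma~\ref{lem48}.
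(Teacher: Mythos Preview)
Your proof is correct and follows essentially the same argument as the paper: write $x=G_w(\tilde x)$, $y=G_w(\tilde y)$, apply Lemma~\ref{lem48} to $u\circ G_w\in\F_{\R^{(n)}}$ with the same constant $c$ (since it depends only on $\lambda^\ast,\rho^\ast,r_0$), and combine with the rescaling inequality $\E_{\R^{(n)}}(u\circ G_w)\leq\delta_n\E_\R(u)$ from Lemma~\ref{lemrescaling}. You have also correctly identified the one point requiring attention, namely the uniformity of the constant across the shifted sequences.
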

\begin{proof} Since the constant $c$ from Lemma~\ref{lem48} only depends on $\lambda^\ast$, $\rho^\ast$ and $r_0$ it holds also for $(\E_{\R^{(n)}},\F_{\R^{(n)}})$. There are $x^\prime, y^\prime \in K$ with $x=G_w(x^\prime)$ and $y=G_w(y^\prime)$.
From the rescaling we know $u\circ G_w\in \F_{\R^{(n)}}$ and thus
\begin{align*}
|u(x)-u(y)|^2&=|u(G_w(x^\prime))-u(G_w(y^\prime))|^2\\
&\leq c\E_{\R^{(n)}}(u\circ G_w)\\
&\leq c\delta_n \E_\R(u)
\end{align*}
\end{proof}
\begin{proof}[Proof of Theorem~\ref{theoresform}:] \ \\
(RF1): $\F_\R$ is a linear space and $\E_\R(u)\geq 0$ is obviously satisfied. If $\E_\R(u)=0$, then $\hat\E_\R(u|_{\overline V_\ast})=0$. Since this is a resistance form on $\overline V_\ast$ we know that $u$ is constant on $\overline V_\ast$. Also we know that $\D_{e^w_{c,l}}(u)=0$ for all $e^w_{c,l}$ and thus $u$ is constant on all of them. Since $G_w(c)\in e^w_{c,l}\cap \overline V_\ast$ the constants have to be the same on all parts and, therefore, $u$ is constant on $K$.\\[.2cm]
(RF2): Fix any $p\in V_0$, then it is enough to show that $\F_{\R,0}:=\{u|u\in\F_\R, \ u(p)=0\}$ is complete with respect to $\E_\R$. Let $(u_n)_{n\geq 1}$ be a Cauchy sequence with respect to $\E_\R$. I.e.
\begin{align*}
\E_\R(u_n-u_m)\rightarrow 0, \ \text{for } n\geq m, \ m\rightarrow\infty
\end{align*}
\begin{align*}
|u_n(x)-u_m(x)|^2&=|(u_n-u_m)(x)-(u_n-u_m)(p)|^2\\
&\leq c\E_\R(u_n-u_m)
\end{align*}
That means we have uniform convergence for $(u_n)_{n\geq 1}$ and, therefore, there is a $u\in C(K)$ with $u_n\rightarrow u$. Since $\D_{e^w_{c,l}}$ is a resistance form itself and $\D_{e^w_{c,l}}(u_n-u_m)\rightarrow 0$ we get that $u|_{e^w_{c,l}}\in H^1(e^w_{c,l})$. \\[.2cm]
It remains to show that $u_n\rightarrow u$ with respect to $\E_\R$ and that $\E_\R(u)<\infty$.
\begin{align*}
\E_{\R,k}(u_n-u_m)\leq \E_\R(u_n-u_m)\leq \underbrace{\sup_{m\geq n}\E_\R(u_n-u_m)}_{<\infty}
\end{align*}
If we let $m$ go to infinity we get
\begin{align*}
\E_{\R,k}(u_n-u)\leq \sup_{m\geq n}\E_\R(u_n-u_m)
\end{align*}
We were able to substitute $u_m$ for $u$ in the limit since in $\E_{\R,k}$ only squared differences of $u$ and Dirichlet energies appear. We already know that $u_m$ converges to $u$ with respect to them. Next we let $k\rightarrow\infty$
\begin{align*}
\E_\R(u_n-u)\leq \sup_{m\geq n}\E_\R(u_n-u_m)
\end{align*}
That means $u_n-u\in\F_\R$ and since $u_n\in\F_\R$ this implies $u\in\F_\R$ because this is a linear space. Also for $n\rightarrow\infty $ we get
\begin{align*}
\E_\R(u_n-u)\rightarrow 0\quad \text{ and thus } \quad u_n\xrightarrow{\E_\R}u.
\end{align*}
(RF3): (1) $x$ or $y\notin \overline V_\ast$ \\
Without loss of generality let $x$ be this point.
Then there exists an $e^w_{c,l}$ with $x\in e^w_{c,l}$ but $x\notin \overline V_\ast$. 
\begin{align*}
\Rightarrow \exists u\in\F_\R: \ u(x)=1, \ u|_{(e^w_{c,l})^c}\equiv 0
\end{align*}
For example we could use linear interpolation between $x$ and the endpoints of $e^w_{c,l}$. Then $u\in \F_\R$ and $u(\tilde y)<1$ for all $\tilde y\neq x$.\\

(2) $x,y\in \overline V_\ast$:\\
We find a $u$ in the extended domain of $\hat F_\R$ with $u(x)\neq u(y)$. We can extend $u$ to a function $\tilde u$ by linear interpolation on all $e^w_{c,l}$. Then 
\begin{align*}
\E_\R(\tilde u)=\hat\E_\R(u)
\end{align*}
and thus $\tilde u\in \F_\R$ with $\tilde u(x)=u(x)\neq u(y)=\tilde u(y)$.\\[.2cm]
(RF4): This follows with Lemma~\ref{lem48}. \\[.2cm]
(RF5): We have $\overline u=(0\vee u)\wedge 1$. It is clear that $\overline u \in C(K)$. Also $\overline u|_{e^w_{c,l}}\in H^1(e^w_{c,l})$.\\
We see that
\begin{align*}
|\overline u(x)-\overline u(y)|^2\leq |u(x)-u(y)|^2, \ \forall u,x,y
\end{align*}
and also 
\begin{align*}
\mathcal{D}_{e^w_{c,l}}(\overline u)\leq \mathcal{D}_{e^w_{c,l}}( u)
\end{align*}
for all $e^w_{c,l}$. 
\begin{align*}
&\E_{\R,n}(\overline u)\leq \E_{\R,n}(u)\\
\Rightarrow \ &\E_\R(\overline u)\leq \E_\R(u)
\end{align*}
$\Rightarrow \overline u \in \F_\R$.\\

So far we have shown that $(\E_\R,\F_\R)$ is a resistance form on $K$. It remains to show that the topologies with respect to either resistance or Euclidean metric are the same.\\

Let $\iota:(K,|\cdot|_e)\rightarrow (K,R_\R)$ be the identity mapping and $(x_n)_{n\geq 1}$ a sequence in $K$ with $x_n\xrightarrow{|\cdot|_e} x$. We have to show that $(x_n)_{n\geq 1}$ also converges to $x$ with the resistance metric to show that $\iota$ is continuous. \\[.2cm]
We have three cases: 
\begin{enumerate}[(1)]
\item $x$ lies in the interior of some $e^w_{c,l}$
\item $x\in C_\ast$
\item $x\in \overline V_\ast\backslash C_\ast=\Sigma$
\end{enumerate}

Consider (1) $\Rightarrow \exists n_0\geq 0 \ : \ \forall n\geq n_0 \ : \ x_n\in e^w_{c,l}$. Let $u\in\F_\R$
\begin{align*}
\frac{|u(x_n)-u(x)|^2}{\E_\R(u)}\leq \frac{|u(x_n)-u(x)|^2}{(\gamma_{|w|}\rho^{|w|+1}_{c,l})^{-1}\D_{e^w_{c,l}}(u)}
\end{align*}
Now $\D_{e^w_{c,l}}$ itself is a resistance form and its associated resistance metric $\frac{|x-y|_e}{\operatorname{diam}(e^w_{c,l},|\cdot|_e)}$. 
\begin{align*}
\Rightarrow \frac{|u(x_n)-u(x)|^2}{\E_\R(u)}\leq \gamma_{|w|}\rho^{|w|+1}_{c,l} \frac{|x_n-x|_e}{\operatorname{diam}(e^w_{c,l},|\cdot|_e)}
\end{align*}
\begin{align*}
\Rightarrow R_\R(x_n,x)\leq \frac{\gamma_{|w|}\rho^{|w|+1}_{c,l}}{\operatorname{diam}(e^w_{c,l},|\cdot|_e)} |x_n-x|_e \xrightarrow{n\rightarrow\infty} 0
\end{align*}

(2) $x\in C_\ast$ i.e. $x=G_w(c)$ for some $c\in\C$ and $w\in\A^\ast_0$. Then $\exists n_0\geq 0 $ such that
\begin{align*}
\forall n \geq n_0\ : \ x_n\in \bigcup_{l\in \{1,\ldots,\rho(c)\}}e^w_{c,l}
\end{align*}
That means it may jump around the various lines that are connected to $x=G_w(c)$ in this ``spider-net''.
We decompose the sequence $(x_n)_{n\geq 1}$ into various subsequences $\{x_n| \ x_n\in e^w_{c,l}\}$, $\forall l$ and $\{x_n|\ x_n=x\}$. For the latter it is clear that $R_\R(x_n,x)\rightarrow 0$ and for the first we can apply (1). We thus have $R_\R(x_n,x)\rightarrow x$ for all subsequences and thus for the whole sequence $(x_n)_{n\geq 1}$.\\

(3) There is a word $w\in \A^\mathbb{N}$ such that $x=\lim_{m\rightarrow\infty} G_{w_1\cdots w_m}(p)$. \\
Now either (I) $\forall m$ we have $x_n\in G_{w_1\cdots w_m}(K)$ for $n$ big enough \\
or (II) the sequence $(x_n)_{n\geq 1}$ can be divided into two parts where the first contains all points that behave like (I) and in the second are all points that do not (i.e. are in some edges $e^w_{c,l}$). \\
For the first case (I) we know that the diameter of $n$-cells goes to $0$ by Corollary~\ref{cor49} and thus it converges in resistance metric. For the second case (II) we can apply the ideas we already introduced. \\

That means the identity map $\iota: (K,|\cdot|_e)\rightarrow(K,R)$ is continuous. Since $(K,|\cdot|_e)$ is compact, so is $(K,R)$ and thus $\iota^{-1}$ is also continuous. Therefore, the topologies are the same.
\end{proof}

\section{Measures and operators}\label{chap_meas_oper}
Until now we have resistance forms. To get Dirichlet forms and thus operators we need to introduce measures. These measures have to fulfill some requirements. They have to be locally finite (i.p. finite due to the compactness of $K$) and to be supported on the whole set $K$.
\subsection{Measures}
We want to describe the measures on $K$ as the sum of a fractal- and a line-part in accordance to the geometric appearance of $K$. \\

It is clear how the fractal part of the measure has to look like. We want as much symmetry as possible. Therefore, we use the normalized self-similar measure on $K$ which distributes mass equally onto the $m$-cells:
\begin{align*}
\mu_\Sigma(K_w)=\mu_\Sigma(\Sigma_w)=\left(\frac 1N\right)^{|w|}
\end{align*}
This gives us a measure on $K$ that fulfills:
\begin{align*}
\mu_\Sigma=\sum_{i=1}^N \frac 1N \cdot \mu_\Sigma \circ G_i^{-1}
\end{align*}
We see, however, that $\mu_\Sigma$ is only supported on the fractal dust $\Sigma$ which is the attractor of $(G_1,\ldots,G_N)$. This is a proper subset of $K$. Therefore, $\mu_\Sigma$ doesn't have full support. That means we can't use $\mu_\Sigma$ to get Dirichlet forms. This measure is too rough to measure the one-dimensional lines. We therefore need another measure that is able to measure these lines.\\

For this line part we want to ignore the length of $e^w_{c,l}$ according to the one-dimensional Lebesgue measure $\lambda^1$. Since we are analyzing $K$ only topologically, this value is not giving us much information. We need a measure that assigns these lines some weight such that these weights are finite when summed up. For the initial lines $e_{c,l}$ we set 
\begin{align*}
\mu_I(e_{c,l}):=a_{c,l}
\end{align*} with $a_{c,l}>0$ for $c\in \C$ and $l\in \{1,\ldots,\rho(c)\}$.\\

How should this measure scale for lines $e^w_{c,l}$. For symmetry reasons we want that the scaling is independent of the $m$-cell that we consider. We thus define
\begin{align*}
\mu_I(e^w_{c,l}):=\beta^{|w|}a_{c,l}
\end{align*}
with some $\beta>0$. We easily see that we need $\beta<\frac 1N$ to get a finite measure on $J:=\bigcup_{n\geq 1}J_n$. On the lines we define the measure as follows
\begin{align*}
\mu_I|_{e^w_{c,l}}:= \beta^{|w|}a_{c,l} \cdot \frac{\lambda^1}{\lambda^1(e^w_{c,l})}, \ w\in \A^\ast_0
\end{align*}
This means it behaves like the one-dimensional Lebesgue measure on $e^w_{c,l}$ but it is normalized and then scaled by $\beta^{|w|}a_{c,l}$. Therefore, it doesn't depend on the value of $\lambda^1(e^w_{c,l})$. If $\beta<\frac 1N$ we have $a:=\mu_I(J)<\infty$. We choose the $a_{c,l}$ such that $\mu_I(J)=1$, by dividing with $a$. 
Calculating $\mu_I(J)$ gives us
\begin{align*}
\sum_{\substack{c\in\C\\l\in\{1,\ldots,\rho(c)\}}} a_{c,l}=1-\beta N
\end{align*}
If $\beta\rightarrow 0$ then more mass is distributed to bigger edges (big in the sense of short words $w$) and if $\beta\rightarrow \frac 1N$ the mass is distributed more equally which displays the geometry better. As a matter of fact $\beta=\frac 1N$ is not possible, that means the real geometry of $K$ is distorted by $\mu_I$.\\

We know that $J$ is dense in $K$. Therefore, $\mu_I$ has full support and can be used to get Dirichlet forms. The measures that we will consider will be convex combinations of the two measures:\\

Let $\eta \in (0,1]$
\begin{align*}
\mu_\eta:=\eta \mu_I+(1-\eta)\mu_\Sigma
\end{align*}
$\eta=0$ is not allowed, since $\mu_\Sigma$ doesn't have full support. $\mu_1=\mu_I$, however, can be used alone. In this case we don't have any fractal part in the measure and this will reflect in the spectral asymptotics.\\

Now we want to know how $\mu_\eta$ scales with $w$. For the fractal part this is clear. We have
\begin{align*}
\mu_\Sigma(K_w)=\left(\frac 1N\right)^{|w|}
\end{align*}
For the line part we have the following.
\begin{align*}
1&=N^{|w|} \mu_I(K_w)+ \sum_{\substack{c\in \C\\l\in\{1,\ldots,\rho(c)\}\\ \tilde w: |\tilde w|<|w|}}\mu_I(e^{\tilde w}_{c,l})\\
&=N^{|w|} \mu_I(K_w)+\sum_{k=0}^{|w|-1}N^k\cdot\sum_{\substack{c\in\C\\l\in\{1,\ldots,\rho(c)\}}}a_{c,l}\beta^k\\
&=N^{|w|} \mu_I(K_w)+\frac{1-(\beta N)^{|w|}}{1-\beta N}\sum_{\substack{c\in\C\\l\in\{1,\ldots,\rho(c)\}}}a_{c,l}\\
&=N^{|w|} \mu_I(K_w)+1-(\beta N)^{|w|}
\end{align*}
That means $\mu_I(K_w)=\beta^{|w|}$. For $\mu_\eta$ this leads to
\begin{align*}
\beta^{|w|}\leq \mu_\eta(K_w)\leq \left(\frac 1N\right)^{|w|}
\end{align*}
\subsection{Operators}
With these measures we can define Dirichlet forms and, therefore, operators on $L^2(K,\mu_\eta)$. Let $\R$ be a regular sequence of harmonic structures. \\
\indent Now since $(K,R_\R)$ is compact we have $\D_\R:=\overline{\F_\R\cap C_0(K)}^{\E^{\frac 12}_{\R,1}}=\F_\R$.
\begin{lemma} \label{lem51} Let $\R$ be a regular sequence of harmonic structures. Then $(\E_\R,\D_\R)$ is a regular Dirichlet form on $L^2(K,\mu_\eta)$.
\end{lemma}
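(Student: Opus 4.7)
The plan is to translate the resistance form $(\E_\R,\F_\R)$ produced by Theorem~\ref{theoresform} into a Dirichlet form on $L^2(K,\mu_\eta)$ via the standard correspondence between resistance forms on compact metric spaces and regular Dirichlet forms (see \cite{kig12}), and then to establish regularity by a Stone--Weierstrass argument applied to $\F_\R$ as an algebra of continuous functions on $K$.

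First I would collect the hypotheses. By Theorem~\ref{theoresform}, $(\E_\R,\F_\R)$ is a resistance form on $K$ and the resistance metric $R_\R$ induces the Euclidean topology, so $(K,R_\R)$ is compact. Lemma~\ref{lem48} then gives $|u(x)-u(y)|^2\leq c\,\E_\R(u)$ for all $u\in\F_\R$, so every element of $\F_\R$ is uniformly continuous on $(K,R_\R)$ and hence on $(K,|\cdot|_e)$; in particular $\F_\R\subset C(K)=C_0(K)\subset L^2(K,\mu_\eta)$, which already justifies the identity $\D_\R=\F_\R$ recorded just before the lemma. Next, $\mu_\eta$ is a finite Borel measure ($\mu_\eta(K)=1$) with full support: $\mu_I$ restricts to a positive multiple of Lebesgue measure on every line $e_{c,l}^w$, and $J=\bigcup_n J_n$ is dense in $K$ since $K=\overline{\mathcal O}$, so every non-empty open set in $K$ meets some $e_{c,l}^w$ in a sub-arc of positive $\mu_I$-measure.

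With this setup the Dirichlet axioms are routine. Symmetry, bilinearity and non-negativity come from (RF1), and the unit contraction is exactly (RF5). For closedness, a sequence $(u_n)\subset\F_\R$ that is Cauchy in the graph norm $\bigl(\E_\R(\cdot)+\|\cdot\|_{L^2(\mu_\eta)}^2\bigr)^{1/2}$ is Cauchy both in $\E_\R$ and in $L^2$; by Lemma~\ref{lem48} this forces uniform convergence to a continuous limit $u$, and the argument already used for (RF2) in the proof of Theorem~\ref{theoresform} places $u\in\F_\R$ with $\E_\R(u_n-u)\to 0$. So $(\E_\R,\F_\R)$ is a symmetric closed Markovian form on $L^2(K,\mu_\eta)$.

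For regularity, density of $\F_\R\cap C_0(K)=\F_\R$ in $\F_\R$ under the form norm is trivial; only uniform density of $\F_\R$ in $C_0(K)=C(K)$ remains. I would invoke Stone--Weierstrass: $\F_\R$ contains the constants (RF1) and separates points of $K$ (RF3), so it suffices to check that $\F_\R$ is an algebra. This is where I expect the main (but modest) obstacle. The idea is a level-by-level Leibniz bound: the pointwise inequality $|(uv)(x)-(uv)(y)|^2\leq 2\|u\|_\infty^2|v(x)-v(y)|^2+2\|v\|_\infty^2|u(x)-u(y)|^2$ controls the discrete part $\hat\E_{\R,n}$, while the analogous $\D_{xy}(uv)\leq 2\|u\|_\infty^2\D_{xy}(v)+2\|v\|_\infty^2\D_{xy}(u)$ (standard for $H^1[0,1]$) handles each connecting line. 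Summing the appropriate re-scaled contributions in $\E_{\R,n}$ and letting $n\to\infty$ yields $\E_\R(uv)\leq 2\|u\|_\infty^2\E_\R(v)+2\|v\|_\infty^2\E_\R(u)<\infty$ for $u,v\in\F_\R\subset L^\infty(K)$, so $\F_\R\cdot\F_\R\subset\F_\R$. Stone--Weierstrass then gives uniform density of $\F_\R$ in $C(K)$, completing the proof of regularity.
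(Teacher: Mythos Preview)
Your argument is correct, but it takes a markedly different route from the paper. The paper's proof is a two-line appeal to the general machinery of \cite{kig12}: once Theorem~\ref{theoresform} establishes that $(\E_\R,\F_\R)$ is a resistance form whose resistance metric gives the compact Euclidean topology, Corollary~6.4 of \cite{kig12} immediately yields that the resistance form is \emph{regular} (in Kigami's sense), and Theorem~9.4 of \cite{kig12} then converts any regular resistance form on a compact space with a full-support finite Borel measure into a regular Dirichlet form. No hand verification of closedness, the Markov property, or the Stone--Weierstrass density is needed, because these are exactly the content of those general theorems.

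Your approach reproves those general facts in this specific setting: you check the Dirichlet axioms directly (closedness via Lemma~\ref{lem48} and the (RF2) argument, Markov via (RF5)) and obtain regularity by showing $\F_\R$ is a subalgebra of $C(K)$ via a Leibniz-type estimate on both the discrete squared-difference terms and the $H^1$ line energies, then invoking Stone--Weierstrass together with (RF1) and (RF3). This is more self-contained and makes the mechanism transparent, at the cost of length; the paper's version is shorter and emphasizes that nothing beyond the abstract resistance-form framework is required here. One minor notational slip: what you call ``the discrete part $\hat\E_{\R,n}$'' is really $\E_{\R,n}^\Sigma$ (the fractal squared-difference part of $\E_{\R,n}$); the symbol $\hat\E_{\R,n}$ in the paper also includes squared differences over the connecting edges rather than Dirichlet energies. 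Your actual estimate is unaffected, since you separately bound both the squared-difference terms and the line Dirichlet energies.
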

\begin{proof} 
From Theorem~\ref{theoresform} we know that $(\E_\R,\F_\R)$ is a resistance form on $K$. By \cite[Cor. 6.4]{kig12} $(\E_\R,\F_\R)$ is regular and then the statement follows with \cite[Theo. 9.4]{kig12}.
\end{proof}
Introducing Dirichlet boundary conditions we get another Dirichlet form with $\mathcal{D}_\mathcal{R}^0:=\{u|u\in\mathcal{D}_\mathcal{R}, \ u|_{V_0}\equiv 0\}$.
\begin{lemma} \label{lem52}Let $\R$ be a regular sequence of harmonic structures.\\ Then $(\mathcal{E}_\mathcal{R}|_{\mathcal{D}_\mathcal{R}^0\times\mathcal{D}_\mathcal{R}^0}, \mathcal{D}_\mathcal{R}^0)$ is a regular Dirichlet form on $L^2(K,\mu_\eta|_{K\backslash V_0})$.
\end{lemma}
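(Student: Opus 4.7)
The plan is to mimic the proof of Lemma~\ref{lem51} while tracking the Dirichlet boundary condition. Three items must be checked: that the two $L^2$-spaces agree, that the restricted form is closed and Markovian, and that it is regular. The first is immediate because $V_0=\P$ is finite while $\mu_\eta$ has no atoms: the self-similar measure assigns each post-critical point the mass $\lim_n N^{-n}=0$, and $\mu_I$ is absolutely continuous with respect to the one-dimensional Lebesgue measure on each connecting edge. Hence $\mu_\eta(V_0)=0$ and $L^2(K,\mu_\eta)=L^2(K\backslash V_0,\mu_\eta|_{K\backslash V_0})$.

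For closedness, I would argue that pointwise evaluation at each $p\in V_0$ is $\E_{\R,1}^{1/2}$-continuous on $\F_\R$ by Lemma~\ref{lem48}, so $\D_\R^0$ is a closed subspace of $(\D_\R,\E_{\R,1}^{1/2})$. Combined with the closedness of $(\E_\R,\D_\R)$ on $L^2(K,\mu_\eta)$ from Lemma~\ref{lem51}, this yields closedness of the restricted form: any $\E_{\R,1}^{1/2}$-Cauchy sequence in $\D_\R^0$ converges in $\D_\R$ and, by Lemma~\ref{lem48}, uniformly, so its limit still vanishes on $V_0$. The Markov property is inherited because, for $u\in\D_\R^0$, the unit contraction $\overline u$ of (RF5) lies in $\F_\R$, satisfies $\overline u|_{V_0}=0$ (since $\overline 0=0$), and hence $\overline u\in\D_\R^0$ with $\E_\R(\overline u)\leq \E_\R(u)$.

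Regularity is the main step. Since $\D_\R^0\subset C(K)$ with $u|_{V_0}=0$, one has $\D_\R^0\subset C_0(K\backslash V_0)=\{f\in C(K):f|_{V_0}=0\}$, so $\D_\R^0\cap C_0(K\backslash V_0)=\D_\R^0$, and density in the $\E_{\R,1}^{1/2}$-norm is automatic. For uniform density in $C_0(K\backslash V_0)$, given $f\in C_0(K\backslash V_0)$ I would first invoke Lemma~\ref{lem51} to approximate $f$ uniformly by some $v\in\F_\R$, then subtract the harmonic extension $h\in\F_\R$ of $v|_{V_0}$; such an energy minimiser subject to prescribed values on the finite set $V_0$ exists by general resistance form theory. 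The hard part is the maximum principle $\|h\|_\infty\leq \|v|_{V_0}\|_\infty$, which I would establish by applying (RF5) to the two-sided cut-off of $h$ at $\min v|_{V_0}$ and $\max v|_{V_0}$ and using uniqueness of the energy minimiser. Since $f|_{V_0}=0$, this gives $\|h\|_\infty\leq \|v-f\|_\infty$, so $v-h\in\D_\R^0$ with $\|(v-h)-f\|_\infty\leq 2\|v-f\|_\infty$, completing the argument. Density of $\D_\R^0$ in $L^2(K\backslash V_0,\mu_\eta|_{K\backslash V_0})$ then follows because continuous functions vanishing on the $\mu_\eta$-null set $V_0$ are dense in $L^2$.
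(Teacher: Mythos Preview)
Your argument is correct, but the paper takes a much shorter route: it simply observes that since $(\E_\R,\D_\R)$ is a regular Dirichlet form on $L^2(K,\mu_\eta)$ by Lemma~\ref{lem51} and $K\backslash V_0$ is open, the restriction to functions vanishing on $V_0$ is again a regular Dirichlet form by the general ``part on an open set'' theorems, namely \cite[Theorem~10.3]{kig12} or \cite[Theorem~4.4.3]{fot}. What you have done is essentially reprove the relevant special case of those theorems by hand: the closedness via continuity of point evaluations, the Markov property via the unit contraction, and regularity via harmonic extension and the maximum principle. Your route is more self-contained and makes explicit exactly which features of resistance forms are used (Lemma~\ref{lem48} for pointwise control, (RF5) for the maximum principle), whereas the paper's route is a one-line citation that keeps the exposition lean but hides the mechanism. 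Either is acceptable; if you are writing for an audience not already fluent in \cite{kig12,fot}, your explicit argument has pedagogical value, but in the context of this paper the citation is the cleaner choice.
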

\begin{proof} This follows with Lemma~\ref{lem51} and \cite[Theorem 10.3]{kig12} or \cite[Theorem 4.4.3]{fot}.
\end{proof}
We denote the associated self-adjoint operators with dense domains by $-\Delta_N^{\mu_\eta,\mathcal{R}}$ resp. $-\Delta_D^{\mu_\eta,\mathcal{R}}$. 
\begin{lemma}\label{lem53}$-\Delta_N^{\mu_\eta,\mathcal{R}}$ and $-\Delta_D^{\mu_\eta,\mathcal{R}}$ have discrete non-negative spectrum.
\end{lemma}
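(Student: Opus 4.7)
The plan is to prove discreteness of the spectrum by establishing that the associated resolvents are compact; non-negativity is automatic since both operators arise from non-negative closed symmetric forms (Dirichlet forms on $L^2$ spaces always generate non-negative self-adjoint operators).

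First I would handle non-negativity: since $\E_\R(u) \geq 0$ for every $u \in \F_\R$ by construction, and both $(\E_\R,\D_\R)$ and its Dirichlet restriction are closed Dirichlet forms by Lemmas~\ref{lem51} and~\ref{lem52}, the associated self-adjoint operators $-\Delta_N^{\mu_\eta,\R}$ and $-\Delta_D^{\mu_\eta,\R}$ are non-negative by the spectral theorem applied to the representation $\E_\R(u,v) = \langle (-\Delta)^{1/2} u, (-\Delta)^{1/2} v\rangle_{L^2}$.

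For discreteness, the approach is to show that the inclusion $\D_\R \hookrightarrow L^2(K,\mu_\eta)$, when $\D_\R$ is equipped with the norm $\E_{\R,1}^{1/2} := (\E_\R + \|\cdot\|_{L^2}^2)^{1/2}$, is compact; this forces $(-\Delta + \mathrm{id})^{-1}$ to be compact and hence the spectrum to be discrete. The key ingredient is Lemma~\ref{lem48}, which yields a uniform estimate $|u(x) - u(y)|^2 \leq c\, R_\R(x,y)\, \E_\R(u)$ in the resistance metric (this is precisely (RF4) combined with the definition of $R_\R$). Thus any $\E_{\R,1}$-bounded family in $\D_\R$ is uniformly bounded (via $\|u\|_{L^2}$ and the compactness of $K$) and equicontinuous with respect to $R_\R$. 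Since $(K,R_\R)$ is compact by Theorem~\ref{theoresform}, the Arzel\`a--Ascoli theorem gives relative compactness in $C(K)$, and continuity of $C(K) \hookrightarrow L^2(K,\mu_\eta)$ (using finiteness of $\mu_\eta$) then yields the required compact embedding. The Dirichlet-boundary case is identical since $\D_\R^0$ is a closed subspace of $\D_\R$.

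The main obstacle is strictly speaking a bookkeeping issue: one must verify that the uniform bound in Lemma~\ref{lem48} is strong enough to give equicontinuity in the resistance metric (not merely boundedness), and that continuity of the identity map $(K,|\cdot|_e) \to (K,R_\R)$ established in Theorem~\ref{theoresform} transfers equicontinuity back to the original topology used to define $C(K)$. Since both topologies coincide, this is automatic. In fact the entire statement is a direct application of Kigami's general results for resistance forms on compact spaces: by \cite[Cor.~6.4, Theo.~9.4]{kig12} together with the compactness of $(K,R_\R)$ and the fact that $\mu_\eta$ is a finite Borel measure of full support, the non-negative self-adjoint operator associated to the Dirichlet form has compact resolvent, and the same conclusion applies to the Dirichlet-boundary version by the min-max principle. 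Hence both operators have discrete non-negative spectrum.
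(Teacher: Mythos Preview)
Your proposal is correct and follows essentially the same route as the paper: both argue that compactness of $(K,R_\R)$ together with the resistance estimate forces the inclusion $\D_\R\hookrightarrow C(K)$ to be compact, then compose with the continuous inclusion $C(K)\hookrightarrow L^2(K,\mu_\eta)$ to obtain a compact resolvent, and handle the Dirichlet case via the subspace inclusion $\D_\R^0\subset\D_\R$. The only difference is that you spell out the Arzel\`a--Ascoli argument explicitly, whereas the paper invokes \cite[Lemma~9.7]{kig12} for the compact embedding into $C(K)$ and \cite[Theo.~4,~5, Chap.~10]{bs87} for the spectral conclusion.
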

\begin{proof}
Since $(K,R_\mathcal{R})$ is compact it follows with \cite[Lemma 9.7]{kig12} that the inclusion map $\iota: \mathcal{D}_\mathcal{R}\hookrightarrow C(K)$ with the norms $\mathcal{E}_\mathcal{R}^{\frac 12}$ resp. $||\cdot ||_{\infty}$ is a compact operator. Since the inclusion map from $C(K)$ to $L^2(K,\mu)$ is continuous the inclusion from $\mathcal{D}_\mathcal{R}$ to $L^2(K,\mu)$ is a compact operator and, therefore, the spectrum of $-\Delta_N^{\mu,\mathcal{R}}$ is discrete and non-negative with \cite[Theo. 5 Chap. 10]{bs87}. Since $\mathcal{D}_\mathcal{R}^0\subset \mathcal{D}_\mathcal{R}$ the same follows for $-\Delta_D^{\mu,\mathcal{R}}$ by \cite[Theo. 4 Chap. 10]{bs87}. 
\end{proof}

\section{Conditions and Hausdorff dimension in resistance metric}\label{chap_cond_haus}
In chapter~\ref{chap_meas_oper} we constructed Dirichlet forms and thus self-adjoint operators on stretched fractals. We needed regular sequences of harmonic structures to do so. Now we want to analyze these operators by calculating some values that give a further description of the underlying fractal. These values are the Hausdorff dimension calculated with respect to the resistance metric and the asymptotic growing of the eigenvalue counting function. But to be able to do this we need to introduce some conditions on the sequences of harmonic structures.
\subsection{Conditions}\label{conditions}
We need the following conditions: We only consider regular sequences of harmonic structures $\R=(\lambda_k,\boldsymbol{\rho}^k)_{k\geq 1}$ such that there exists a $\lambda\in(0,\lambda^\ast]$ with
\setcounter{equation}{0} 
\begin{align}
\sum_{k=1}^\infty| \lambda- \lambda_k|<\infty \label{eqcond}
\end{align}
With the limit comparison test we can easily show that then $\sum_{k=1}^\infty |\ln(\lambda^{-1}\lambda_k)|$ converges and thus
\begin{align*}
\prod_{i=1}^\infty \lambda^{-1}\lambda_i \in (0,\infty)
\end{align*}
This means the sequence $a_m:=\prod_{i=1}^m \lambda^{-1}\lambda_i$ is bounded from above and below:
\begin{align*}
\tilde\kappa_1 \lambda^m \leq \delta_m \leq \tilde\kappa_2 \lambda^m
\end{align*}
For $\delta_m^{(n)}=\lambda_{n+1}\cdots \lambda_{n+m}=\frac{\delta_{n+m}}{\delta_n}$ this means
\begin{align*} 
\frac{\tilde \kappa_1}{\tilde \kappa_2} \lambda^m \leq \delta^{(n)}_m \leq \frac{\tilde \kappa_2}{\tilde \kappa_1} \lambda^m
\end{align*}
Without loss of generality we can assume that $\tilde \kappa_1\leq 1 \leq \tilde \kappa_2$ and thus with $\kappa_1:=\frac{\tilde \kappa_1}{\tilde \kappa_2} $ and $\kappa_2:=\frac{\tilde \kappa_2}{\tilde \kappa_1} $ we get for all $n$ and $m$: 
\begin{align*}
\kappa_1\lambda^m\leq \delta^{(n)}_m\leq \kappa_2\lambda^m
\end{align*}
This means if we have a regular sequence of harmonic structures with (\ref{eqcond}) we have control over the resistances that appear in the rescaling of the quadratic form in Lemma~\ref{lemrescaling}. And we have this control for all sequences $\R^{(n)}$ with the same constants $\kappa_1$ and $\kappa_2$.

\subsection{Hausdorff dimension in resistance metric}
The Hausdorff dimension is a value which describes the size of a set. It strongly depends on the metric that we choose to calculate it. In Proposition~\ref{prop21} we calculated the Hausdorff dimension of stretched fractals with respect to the Euclidean metric. This, however, is not a very meaningful value to describe the analysis of a set. We saw that the resistance forms do not depend on the stretching parameter but only on the topology of $K$. The resistance metric is a better choice to describe the analytic structure of the stretched fractal, so we want to calculate the Hausdorff dimension with respect to this resistance metric.
\begin{align*}
J:=\hspace*{0.4cm}\smashoperator[lr]{\bigcup_{\substack{c\in \C, w\in \A^\ast_0,\\ l\in \{1,\ldots,\rho(c)\}}}}\hspace*{0.3cm} e^w_{c,l}=\bigcup_{n\geq 1}J_n
\end{align*}
Then $K=\Sigma\cup J$ (note: not disjoint). We calculate the dimension of the two parts and due to the stability the Hausdorff dimension of the union will be the bigger of the two.
\begin{lemma} \label{lem61}For any regular sequence of harmonic structures $\R$ we have \begin{align*}\dim_{H,R_\R} J=1\end{align*}
\end{lemma}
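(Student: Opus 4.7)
The plan is to compute the Hausdorff dimension on each individual connecting line and then invoke the countable stability of Hausdorff dimension: since $J=\bigcup_{c,w,l}e^w_{c,l}$ is a countable union, it suffices to show $\dim_{H,R_\R}(e^w_{c,l})=1$ for every choice of $c,l,w$.

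For the upper bound on a single line, I would fix two points $p,q\in e^w_{c,l}$ and let $s,t\in[0,1]$ be their parameters under the affine parametrization $\xi$ of $e^w_{c,l}$. Applying the Cauchy--Schwarz inequality to $\tfrac{d(u\circ\xi)}{dz}$ over the interval with endpoints $s,t$ yields $|u(p)-u(q)|^2\leq |s-t|\,\D_{e^w_{c,l}}(u)$ for every $u\in\F_\R$. Since the Dirichlet energy on this line enters $\E_\R(u)$ with reciprocal coefficient equal to the resistance of $e^w_{c,l}$, one has $\D_{e^w_{c,l}}(u)\leq \gamma_{|w|+1}\rho^{|w|+1}_{c,l}\cdot\E_\R(u)$, and taking the supremum over $u$ gives
\begin{align*}
R_\R(p,q)\leq \gamma_{|w|+1}\rho^{|w|+1}_{c,l}\,|s-t|.
\end{align*}
Because $|s-t|$ is a constant multiple of $|p-q|_e$ on this segment, the identity map $(e^w_{c,l},|\cdot|_e)\to(e^w_{c,l},R_\R)$ is Lipschitz, and consequently $\dim_{H,R_\R}(e^w_{c,l})\leq \dim_{H,|\cdot|_e}(e^w_{c,l})=1$.

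For the lower bound, Theorem~\ref{theoresform} ensures that the $R_\R$-topology on $K$ coincides with the Euclidean one, so $e^w_{c,l}$ remains connected in $(K,R_\R)$ and has positive $R_\R$-diameter (a separating function in $\F_\R$ is easily built, compare the proof of (RF3) inside Theorem~\ref{theoresform}). Invoking the classical fact that a connected metric space $(X,d)$ of diameter $D>0$ satisfies $\mathcal{H}^1_d(X)\geq D$, proved via the $1$-Lipschitz surjection $x\mapsto d(x,x_0)$ onto $[0,D]$, we conclude $\dim_{H,R_\R}(e^w_{c,l})\geq 1$.

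Combining both bounds gives $\dim_{H,R_\R}(e^w_{c,l})=1$ for every edge, and countable stability of Hausdorff dimension then delivers $\dim_{H,R_\R}(J)=1$. The only real work is the Cauchy--Schwarz estimate producing the linear-in-parameter upper bound on $R_\R$; everything else is structural. Notably, neither the regularity of $\R$ nor the rescaling Lemma~\ref{lemrescaling} plays any direct role here, since the argument is carried out one edge at a time and uses only that $\D_{e^w_{c,l}}$ enters $\E_\R$ with a finite, positive coefficient.
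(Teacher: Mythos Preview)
Your proof is correct, and the upper bound is essentially the paper's argument: both of you bound $R_\R(p,q)$ from above by a constant times the Euclidean distance on $e^w_{c,l}$ using that the Dirichlet energy on the line is a summand of $\E_\R$.

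The lower bound, however, is genuinely different. The paper constructs an explicit test function to obtain the matching inequality $R_\R(p,q)\geq a\,|p-q|_e$: it takes the piecewise-linear tent between $p$ and $q$, extends it constantly towards the far endpoint, and then---this is the one subtlety---copies the same profile onto \emph{all} the other lines $e^w_{c,\tilde l}$ sharing the hub $G_w(c)$, so that the function stays continuous at the critical point without picking up extra energy elsewhere. This yields full bi-Lipschitz equivalence of $R_\R$ and $|\cdot|_e$ on each edge, which is stronger than what is needed for the dimension. Your route is softer: you use the Lipschitz upper bound (or Theorem~\ref{theoresform}) to get connectedness of the edge in the resistance metric, and then the general inequality $\mathcal{H}^1_d(X)\geq \operatorname{diam}_d(X)$ for connected $X$. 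That avoids the explicit construction and the spider-web extension entirely, at the cost of not producing the bi-Lipschitz comparison. For the statement of the lemma your argument is sufficient and arguably cleaner; the paper's version has the advantage of giving quantitative metric control that could be reused.
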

\begin{proof}
We show that $\dim_{H,R_\R}(e^w_{c,l})=1$ for all $e^w_{c,l}$. The result follows with $\sigma$-stability.\\[.2cm]
To show this we want to find constants $a,b$ with
\begin{align*}
a|x-y|_e\leq R_\R(x,y)\leq b|x-y|_e
\end{align*}
for all $x,y\in e^w_{c,l}$.\\

(1.) $R_\R(x,y)\leq b|x-y|_e$\\[.1cm]
For this we consider $u\in \F_\R$ with $u(x)=1$ and $u(y)=0$.
\begin{align*}
\Rightarrow \E_\R(u)&\geq \frac 1{\gamma_{|w|}\rho^{|w|+1}_{c,l}} \D_{e^w_{c,l}}(u)\\
&\geq \frac 1{\gamma_{|w|}\rho^{|w|+1}_{c,l}} \frac{\operatorname{diam}(e^w_{c,l},|\cdot|_e)}{|x-y|_e} 
\end{align*}
for all such $u$. This means for the resistance metric
\begin{align*}
R_\R(x,y)\leq \frac{\gamma_{|w|}\rho^{|w|+1}_{c,l}}{\operatorname{diam}(e^w_{c,l},|\cdot|_e)} |x-y|_e
\end{align*}

(2.) $R_\R(x,y)\geq a|x-y|_e$\\[.1cm]
Without loss of generality let $|x-G_w(c)|_e>|y-G_w(c)|_e$. Then define $u$ as follows:
\begin{align*}
u(x)=0 , \ u(y)=1, \ \text{ and linear interpolation between them}
\end{align*}
Also continue $u$ constant $0$ from $x$ to the endpoint which is not $G_w(c)$ and from $y$ to $G_w(c)$ with $1$. \\
Now we want to copy this behavior onto the other lines $e^w_{c,\tilde l}$ with $\tilde l\in\{1,\ldots,\rho(c)\}$ and $\tilde l\neq l$. That means we want that
\begin{align*}
u\circ \xi_{e^w_{c,\tilde l}}(t) = u\circ \xi_{e^w_{c,l}}(t) ,\ \forall t\in[0,1]
\end{align*}
Outside of these edges, we set the function constant $0$. You can see the construction in Figure~\ref{resspider}.
\begin{figure}[H]
\centering
\includegraphics[width=0.5\textwidth]{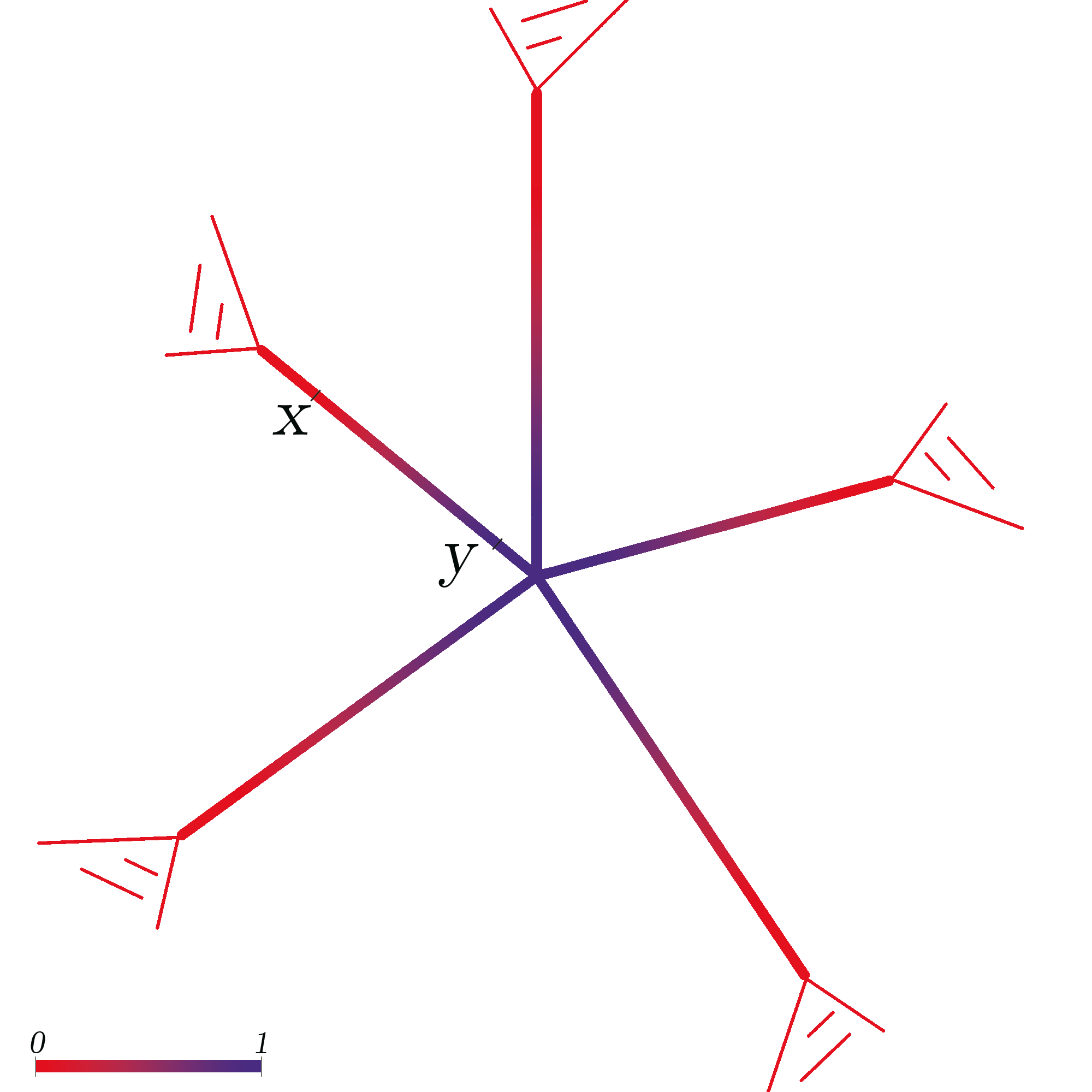}
\caption{Construction of $\protect u$ on connecting lines}
\label{resspider}
\end{figure}
Then $u\in\F_\R$ and we can calculate the energy of $u$.
\begin{align*}
\E_\R(u)=\left(\sum_{\tilde l\in \{1,\ldots,\rho(c)\}} \frac 1{\gamma_{|w|}\rho^{|w|+1}_{c,\tilde l}}\right) \cdot \frac{\operatorname{diam}(e^w_{c,l},|\cdot|_e)}{|x-y|_e}
\end{align*}
Note that the different lines $e^w_{c,\tilde l}$ can have different length (w.r.t. to $|\cdot|_e$), but since we stretched the function in such a way that the proportion of the different parts of $u$ stays the same, the energy is calculated in this way. 
Since $u$ is one of the functions for which the supremum is taken at
\begin{align*}
R_\R(x,y)=\sup\left\{\frac{|u(x)-u(y)|^2}{\E_\R(u)}, \ u\in \F_\R, \ \E_\R(u)>0\right\}
\end{align*}
we get
\begin{align*}
R_\R(x,y)\geq a |x-y|_e
\end{align*}
The constants $a,b$ depend on various things, but they are constant for a fixed $e^w_{c,l}$ and hold for all $x,y\in e^w_{c,l}$.
\end{proof}
Next we want to calculate the Hausdorff dimension of $\Sigma$. This is a self-similar set and $R_\R|_\Sigma$ is a metric on $\Sigma$. We can apply the ideas of \cite{kig95} to calculate this value.
\begin{lemma} \label{lem62}Let $\R$ be a regular sequence of harmonic structures that fulfills the conditions. Then 
\begin{align*}\operatorname{diam}(\Sigma_w,R_\R)\leq c \lambda^n,\ \forall w\in\A^n
\end{align*}
\end{lemma}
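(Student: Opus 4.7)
The bound is essentially a bookkeeping step that combines two facts already available in the paper. My plan is to reduce to Corollary~\ref{cor49} and then absorb the product $\delta_n$ into $\lambda^n$ using the regularity condition from Section~\ref{conditions}.

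First, I would observe that $\Sigma_w = G_w(\Sigma) \subseteq G_w(K) = K_w$ for any $w\in\A^n$, so
\begin{align*}
\operatorname{diam}(\Sigma_w, R_\R) \;\leq\; \operatorname{diam}(K_w, R_\R).
\end{align*}
Thus it suffices to bound the diameter of the cell $K_w$. Corollary~\ref{cor49} gives, for every $u\in\F_\R$ and every $x,y\in K_w$,
\begin{align*}
|u(x)-u(y)|^2 \;\leq\; c\,\delta_n\,\E_\R(u),
\end{align*}
with a constant $c$ depending only on $\lambda^\ast$, $\rho^\ast$ and $r_0$. Taking the supremum over all admissible $u$ in the definition of the resistance metric yields $R_\R(x,y)\leq c\delta_n$, so $\operatorname{diam}(K_w, R_\R)\leq c\delta_n$.

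Finally, because $\R$ fulfills the condition $\sum_{k=1}^\infty|\lambda-\lambda_k|<\infty$ from Section~\ref{conditions}, we have the two-sided comparison $\kappa_1\lambda^n \leq \delta_n\leq \kappa_2\lambda^n$ that was derived there. Substituting the upper bound gives
\begin{align*}
\operatorname{diam}(\Sigma_w,R_\R) \;\leq\; c\delta_n \;\leq\; c\kappa_2\,\lambda^n,
\end{align*}
so relabeling the constant (still depending only on $\lambda^\ast$, $\rho^\ast$, $r_0$ and the summable defect $\sum|\lambda-\lambda_k|$) yields the claimed bound $c\lambda^n$.

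There is no real obstacle here; the only minor point to verify is that the constant $c$ in Corollary~\ref{cor49} does not depend on $w$ or $n$ but only on the parameters $\lambda^\ast,\rho^\ast,r_0$, which was the whole reason those global bounds were tracked carefully in Lemma~\ref{lem41} and Lemma~\ref{lem48}. The condition~(\ref{eqcond}) is precisely what turns the geometric-mean scaling $\delta_n$ into a clean geometric scaling $\lambda^n$, which will be essential later for the Hausdorff dimension computation in the resistance metric.
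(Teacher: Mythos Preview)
Your argument is correct and matches the paper's proof essentially line for line: reduce to $K_w$ via $\Sigma_w\subset K_w$, invoke Corollary~\ref{cor49} for $\operatorname{diam}(K_w,R_\R)\leq c\delta_n$, then use the comparison $\delta_n\leq\kappa_2\lambda^n$ from Section~\ref{conditions}. (You even have the correct constant $\kappa_2$; the paper writes $\kappa_1$ there, which appears to be a typo.)
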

\begin{proof}
We know from Corollary~\ref{cor49} that
\begin{align*}
\operatorname{diam}(K_w,R_\R)\leq c\delta_n
\end{align*}
Since $\Sigma_w\subset K_w$ we get
\begin{align*}
\operatorname{diam}(\Sigma_w,R_\R)\leq \operatorname{diam}(K_w,R_\R)\leq c\delta_n\leq c\kappa_1 \lambda^n
\end{align*}
\end{proof}

\begin{lemma} \label{lem63}Let $\R$ be a regular sequence of harmonic structures that fulfills the conditions. Then there is an $M\geq 0$ and $c>0$ such that for all $x\in \Sigma$ we have
\begin{align*}
\#\{w\in \A^n \ | \ R_\R(x,\Sigma_w)\leq c\lambda^n\}\leq M+1, \ \forall n\in \mathbb{N}
\end{align*}
\end{lemma}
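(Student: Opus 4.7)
The plan is to show that only $n$-cells $\Sigma_{w'}$ that are \emph{geometric neighbors} of $\Sigma_{w_0}$ in the underlying (non-stretched) p.c.f.\ structure can lie within resistance distance $c\lambda^n$ of $x$, and then to invoke the fact that the number of such neighbors is uniformly bounded by the combinatorics of the original IFS. First, since the stretching pulls the $n$-cells of $\Sigma$ apart, the family $\{\Sigma_w\}_{w\in\A^n}$ is pairwise disjoint and there is a unique $w_0:=w(x,n)\in\A^n$ with $x\in \Sigma_{w_0}$. By Lemma~\ref{lem62}, $\operatorname{diam}(\Sigma_{w_0},R_\R)\leq c_0\lambda^n$, so $w_0$ is automatically counted. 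Next I would define
\begin{align*}
\operatorname{Nbr}(w_0):=\{w'\in\A^n\setminus\{w_0\}\ :\ F_{w_0}(F)\cap F_{w'}(F)\neq\emptyset\ \text{in the underlying p.c.f.\ fractal}\}
\end{align*}
and observe that by the p.c.f.\ property each boundary point of $F_{w_0}(F)$ belongs to the finite set $F_{w_0}(V_0)$ and is shared by at most $\max_{c\in\C}\rho(c)$ different $n$-cells, so $|\operatorname{Nbr}(w_0)|\leq M$ with a constant $M$ depending only on the original IFS, uniformly in $w_0$ and $n$.

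The central step is then to prove that for a suitably small $c>0$ and every $w'\notin \operatorname{Nbr}(w_0)\cup\{w_0\}$, one has $R_\R(x,\Sigma_{w'})>c\lambda^n$. Let $j\in\{0,\ldots,n-1\}$ be the length of the longest common prefix of $w_0$ and $w'$, and factor $w_0=v w_0^\sharp$, $w'=v (w')^\sharp$ with $|v|=j$. Using the rescaling property (Lemma~\ref{lemrescaling}) I rescale the problem to $\Sigma_v$, which is governed by $\R^{(j)}$. Since $w'\notin\operatorname{Nbr}(w_0)$, the rescaled cells $\Sigma_{w_0^\sharp}$ and $\Sigma_{(w')^\sharp}$ do not share any p.c.f.\ boundary point. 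I would then construct a test function $u\in\F_\R$ equal to $1$ on the connected component (in $\Gamma_n$) containing $\Sigma_{w_0}$ and $0$ on the component containing $\Sigma_{w'}$, after cutting through the finitely many connecting edges of level $j+1$ that separate the two components, and interpolating linearly across each cut. Using condition~(\ref{eqcond}) (which gives $\delta_m\asymp\lambda^m$ uniformly) together with the upper bound $\rho^\ast$ and a positive lower bound on the $\rho_e^k$ (forced by the harmonic-structure equations combined with $\lambda_k\to\lambda\in(0,\lambda^\ast]$), the energy of $u$ is controlled by $\E_\R(u)\leq C_1\lambda^{-j}$. Combined with $|u(x)-u(y)|=1$ for all $y\in\Sigma_{w'}$, this gives
\begin{align*}
R_\R(x,\Sigma_{w'})\geq \frac{1}{\E_\R(u)}\geq \frac{\lambda^j}{C_1}\geq \frac{\lambda^{n-1}}{C_1}>c\lambda^n
\end{align*}
for any $c<1/(C_1\lambda)$, yielding the lemma with this choice of $c$ and with $M+1$ as the uniform bound.

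The main obstacle will be the construction of the separating test function and the uniform energy estimate: one must identify the correct combinatorial cut separating $\Sigma_{w_0}$ from $\Sigma_{w'}$ at rescaled level $1$ in $\Sigma_v$ and bound the number of connecting edges crossing this cut via the p.c.f.\ structure. The regularity condition~(\ref{eqcond}) is used essentially to keep both the resistance scale $\delta_m$ and the individual connecting resistances $\gamma_k\rho^k_e$ comparable to $\lambda^m$ and $\lambda^{k-1}$ respectively, so that the cut-based lower bound beats the scale $\lambda^n$.
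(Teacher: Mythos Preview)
Your approach via the longest common prefix $j$ and a cut at level $j{+}1$ has a genuine gap. Removing only the connecting edges of level $j{+}1$ inside $K_v$ does \emph{not} in general disconnect $\Sigma_{w_0}$ from $\Sigma_{w'}$ in $K$: the two $(j{+}1)$-subcells of $K_v$ may each contain a point of $G_v(V_0)$ and hence remain joined through connecting lines of level $\leq j$ that leave $K_v$ and return via other $j$-cells. For the Stretched Sierpinski Gasket take $w_0=121$, $w'=131$ (so $v=1$, $j=1$): these are not p.c.f.\ neighbours at level $3$, but after cutting the level-$2$ lines inside $K_1$ the cells $K_{12}$ and $K_{13}$ are still connected through the level-$1$ lines from $G_1(p_2)$ and $G_1(p_3)$ to $K_2$ and $K_3$, and the level-$1$ line joining $K_2$ to $K_3$. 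Hence no function can be $1$ on one ``component'' and $0$ on the other while being constant across the remaining edges, and your energy bound $\E_\R(u)\leq C_1\lambda^{-j}$ does not follow from the stated construction. A second, smaller issue is that your argument also relies on a uniform positive lower bound for the $\rho_e^k$, which the paper never assumes (only $\rho^\ast<\infty$), and which you do not actually prove.

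The paper's proof avoids both problems by working directly at level $n$ rather than at the common-prefix level. One defines $u_n$ on $V_n$ by setting it equal to $1$ on $G_w(V_0)$, on all critical points of $C_\ast$ adjacent to $G_w(V_0)$ via a line in $J$, and on the far endpoints of those lines in the neighbouring $n$-cells, and equal to $0$ on the rest of $V_n$; then one extends harmonically. With this choice the connecting lines carry \emph{no} energy (the function is constant on each of them), and all the energy sits on the \emph{fractal} edges inside the at most $M:=\#\C\cdot\#V_0$ neighbouring $n$-cells, yielding
\[
\E_\R(\tilde u_n)\ \leq\ \frac{M\,\#E_0}{\delta_n\,\min_{e\in E_0}r_0(e)}\ \leq\ \frac{M\,\#E_0}{\kappa_1\,\min_{e\in E_0}r_0(e)}\,\lambda^{-n}.
\]
This gives $R_\R(x,\Sigma_{w'})\geq c\lambda^n$ for every $w'$ outside those $M{+}1$ cells, with no need to control the $\rho_e^k$ from below and no detour through the prefix length.
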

\begin{proof}
Since 
\begin{align*}
R_\R(x,y)=\sup\left\{\frac{|u(x)-u(y)|^2}{\E_\R(u)}\ | \ u\in \F_\R, \ \E_\R(u)>0\right\}
\end{align*}
we get for a fixed $u\in \F_\R$ with $u(x)=0$ and $u(y)=1$
\begin{align*}
R_\R(x,y)\geq \frac 1{\E_\R(u)}
\end{align*}
We are looking for a $u$ such that this estimate is good enough. Let $w\in \A^n$, $y\in \Sigma_w$ and $x\in \Sigma\backslash \Sigma_w$. We want to define a function $u_n$ on $V_n$ and then extend it harmonically to a $\tilde u_n\in\F_\R$. Under harmonic extension the energy doesn't change, so we are able to calculate $\E_\R(\tilde u_n)$. Define 
\begin{align*}
u_n:= 1, \ \text{ on } G_w(V_0)
\end{align*}
Now search for all $n$-cells  that are connected to $G_w(V_0)$ over some $c\in C_\ast$. There are at most $M:=\#\C\#V_0$ many of those (see \cite[Lemma 3.3]{kig95}). Set $u_n=1$ on all $c\in C_\ast$ that are connected to $G_w(V_0)$ by some line in $J$ and also $1$ on the endpoints that intersect with the other $n$-cells. Set $u_n=0$ on all remaining points of $V_n$. This procedure is illustrated in this picture for the Stretched Level 3 Sierpinski Gasket.

\begin{figure}[H]
\centering
\includegraphics[width=0.7\textwidth]{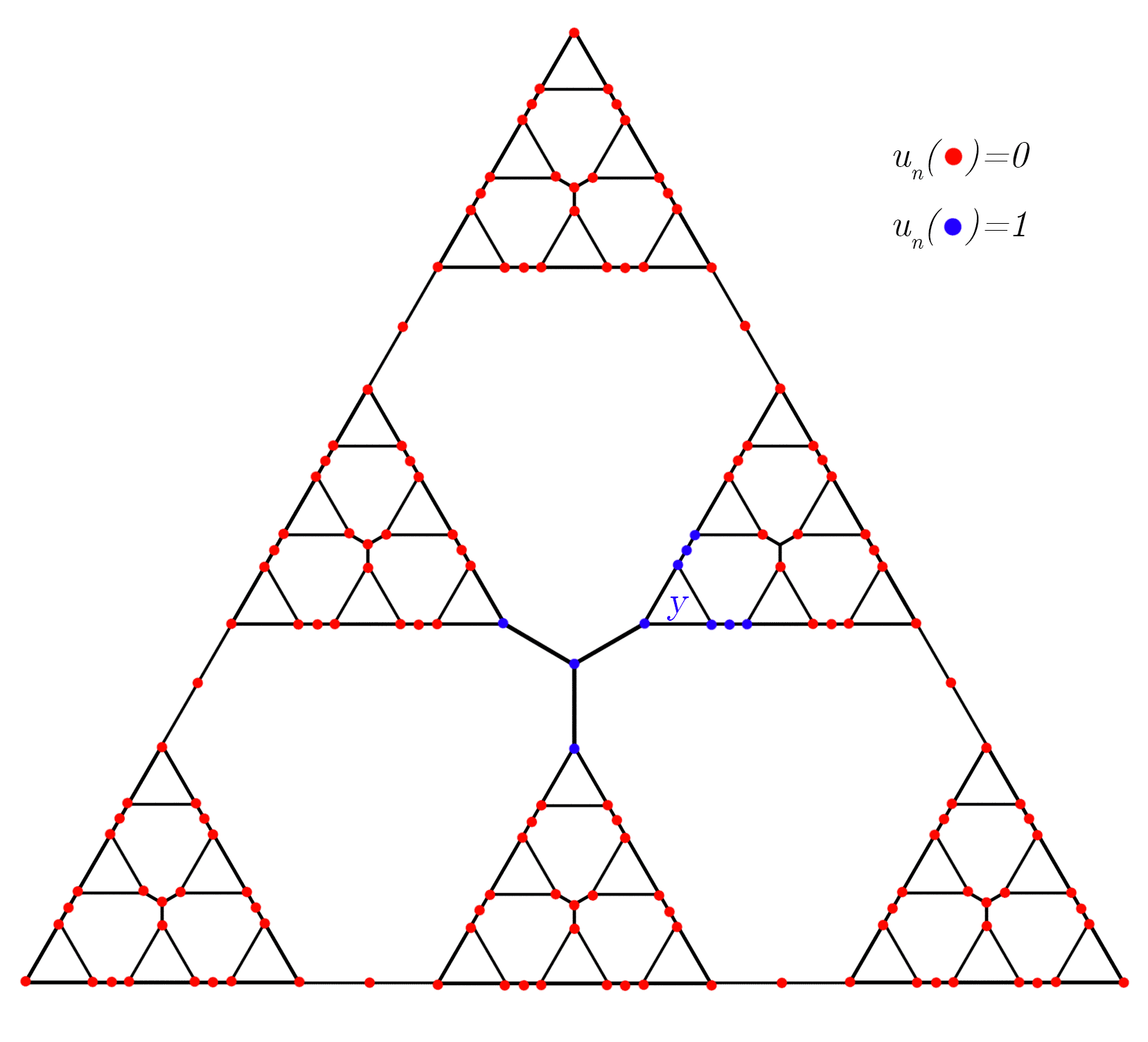}
\caption{Construction of $\protect u_n$}
\end{figure}

Next we extend $u_n$ harmonically to $\tilde u_n\in \F_\R$. We can calculate the energy by
\begin{align*}
\E_\R(\tilde u_n)=\E_{\R,n}(u_n)&\leq M\cdot \#E_0\cdot \frac 1{\delta_n\min_{e\in E_0}r_0(e)}\\
&\leq \frac{M\#E_0}{\kappa_1\min_{e\in E_0}r_0(e)}\cdot \lambda^{-n}
\end{align*}
This leads to
\begin{align*}
R_\R(x,y)\geq \frac{\kappa_1\min_{e\in E_0}r_0(e)}{M\#E_0}\cdot \lambda^n
\end{align*}
This procedure can be done for all $y\in \Sigma_w$ and $x$ such that $\tilde u_n(x)=0$, that means all $x$ that are not in $\Sigma_w$ and all other connected $n$-cells. There are, therefore, at most $M+1$ many $n$-cells (including $\Sigma_w$ itself) for which this construction doesn't work. This gives us the desired result.
\end{proof}
Now we are able to calculate the Hausdorff dimension of $K$.
\begin{theorem} \label{theodim}Let $\R=(\lambda_i,\boldsymbol\rho^i)_{i\geq 1}$ be a regular sequence of harmonic structures that fulfills the conditions, then
\begin{align*}
\dim_{H,R_\R}(K)=\max\left\{1,\frac{\ln N}{-\ln \lambda}\right\}
\end{align*}
\end{theorem}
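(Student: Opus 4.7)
The plan is to combine the two dimension bounds already at hand. Since $K=\Sigma\cup J$ and Hausdorff dimension is $\sigma$-stable on countable (in particular finite) unions, I have
\begin{align*}
\dim_{H,R_\R}(K)=\max\{\dim_{H,R_\R}(\Sigma),\dim_{H,R_\R}(J)\}.
\end{align*}
Lemma~\ref{lem61} already gives $\dim_{H,R_\R}(J)=1$, so the entire problem reduces to computing $\dim_{H,R_\R}(\Sigma)=s$, where $s:=\ln N/(-\ln\lambda)$. Note that $s$ satisfies the crucial identity $N\lambda^s=1$, which drives everything.

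For the upper bound I would use the natural cover $\{\Sigma_w : w\in\A^n\}$ of $\Sigma$. By Lemma~\ref{lem62} each $\Sigma_w$ has $R_\R$-diameter at most $c\lambda^n$, and there are $N^n$ such cells, so
\begin{align*}
\sum_{w\in\A^n}\operatorname{diam}(\Sigma_w,R_\R)^s \leq N^n(c\lambda^n)^s=c^s(N\lambda^s)^n=c^s.
\end{align*}
Since the mesh $c\lambda^n\to 0$, this shows $\mathcal{H}^s(\Sigma,R_\R)\leq c^s<\infty$ and hence $\dim_{H,R_\R}(\Sigma)\leq s$.

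For the lower bound I would invoke the mass distribution principle with the self-similar measure $\mu_\Sigma$ on $\Sigma$ characterized by $\mu_\Sigma(\Sigma_w)=N^{-|w|}$. Given a small $r>0$, choose the unique $n$ with $c\lambda^{n+1}<r\leq c\lambda^n$. If $B_{R_\R}(x,r)\cap\Sigma_w\neq\emptyset$ for some $w\in\A^n$, then $R_\R(x,\Sigma_w)\leq r\leq c\lambda^n$, so Lemma~\ref{lem63} bounds the number of such cells by $M+1$. Thus
\begin{align*}
\mu_\Sigma(B_{R_\R}(x,r))\leq (M+1)N^{-n}=(M+1)\lambda^{ns}\leq (M+1)(c\lambda)^{-s}r^s,
\end{align*}
using $\lambda^n<r/(c\lambda)$. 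The mass distribution principle then yields $\dim_{H,R_\R}(\Sigma)\geq s$, so $\dim_{H,R_\R}(\Sigma)=s$ and the theorem follows.

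The proof is essentially a mechanical assembly of the three preparatory lemmas, so there is no real obstacle; the only point requiring a little care is the bookkeeping in the mass distribution step, in particular verifying that every $n$-cell meeting the ball has its index counted by Lemma~\ref{lem63} and that the translation between $\lambda^n$ and $r^s$ uses the identity $N\lambda^s=1$ correctly. All the hard analytic work (the uniform diameter estimate in Lemma~\ref{lem62} and the bounded-overlap estimate in Lemma~\ref{lem63}) relies on condition (\ref{eqcond}), and once these are in place, the argument proceeds exactly as in the self-similar setting treated in \cite{kig95}.
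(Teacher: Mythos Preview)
Your proof is correct and follows essentially the same route as the paper: decompose $K=\Sigma\cup J$, use Lemma~\ref{lem61} for $J$, and deduce $\dim_{H,R_\R}(\Sigma)=\ln N/(-\ln\lambda)$ from Lemmata~\ref{lem62} and~\ref{lem63}. The only difference is cosmetic: the paper simply cites \cite[Theo.~2.4 or Cor.~1.3]{kig95} to pass from those two lemmata to the dimension of $\Sigma$, whereas you spell out the underlying argument (natural covers for the upper bound, mass distribution principle with $\mu_\Sigma$ for the lower bound), which is exactly the content of that reference.
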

\begin{proof}
From Lemmata~\ref{lem62} and \ref{lem63} it follows with \cite[Theo. 2.4. or Cor. 1.3]{kig95} that
\begin{align*}
\dim_{H,R_\R}(\Sigma)=\frac{\ln N}{-\ln \lambda}
\end{align*}
With Lemma~\ref{lem61} and $K=\Sigma\cup J$ we get the result.
\end{proof}
\subsection{Examples}
With this result and the harmonic structures that we calculated in chapter~\ref{chapexamples} we are now able to calculate the values of the Hausdorff dimension w.r.t the resistance metric of these stretched fractals for different choices of regular sequences of harmonic structures that fulfill the conditions of chapter~\ref{conditions}. For comparison we also list the values in the self-similar case. In the second column we list all possible values in the stretched case.
\renewcommand{\arraystretch}{2}
 \begin{center}
  \begin{tabular}{|C{3cm}|C{3cm}|C{2.5cm} L{2.1cm}|}
\cline{2-4}
 \multicolumn{1}{c|}{}&  \multicolumn{3}{c|}{$\operatorname{dim}_{H,R_\R}$ } \\
\cline{2-4}
 \multicolumn{1}{c|}{}& self-similar & \multicolumn{2}{c|}{stretched}\\
\hline 
Sierpinski Gasket & $\frac{\ln 3}{-\ln \frac 35}$ &  $\max\{1,\frac{\ln 3}{-\ln \lambda}\}$, &  $ \lambda\in(0,\frac 35]$ \\
\hline 
Level 3 Sierpinski Gasket &$\frac{\ln 6}{-\ln \frac 7{15}}$ & $\max\{1,\frac{\ln 6}{-\ln \lambda}\}$, & $  \lambda\in(0,\frac 7{15}]$  \\
\hline 
Sierpinski Gasket in $\mathbb{R}^d$ & $\frac{\ln (d+1)}{-\ln (\frac {d+1}{d+3})}$ & $\max\{1,\frac{\ln (d+1)}{-\ln \lambda}\}$,& $ \lambda\in(0,\frac {d+1}{d+3}]$ \\
\hline 
Vicsek Set & $\frac{\ln 5}{-\ln \frac 13}$ & $\max\{1,\frac{\ln 5}{-\ln \lambda}\}$,& $  \lambda\in(0,\frac 13]$\\
\hline 
Hata's tree & $\frac{\ln 2}{\ln 2-\ln(\sqrt{5}-1)}$ &$\max\{1,\frac{\ln 2}{-\ln\lambda} \}$, & $\lambda\in (0,\frac{\sqrt{5}-1}2)$ \\
\hline 
 \end{tabular}
 \end{center}
\renewcommand{\arraystretch}{1}

The values of the self-similar case was calculated in general by \cite{kig95}. With the renormalization factors we get the according values. In general the value in the stretched case is less or equal than in the self-similar case. We, however, are able to get the same value in all but one case. For Hata's tree we saw that we can only choose constant sequences of $\lambda_i$. Therefore, they cannot converge to the upper bound and thus we can't reach the same value as in the self-similar case.

\section{Spectral asymptotics}\label{chapter7}
Let $\mu$ be any of the allowed measures $\mu_\eta$ with $\eta\in(0,1]$ and $\R$ a regular sequence of harmonic structures.
Due to Lemma~\ref{lem53} we can write the eigenvalues in non-decreasing order and study the eigenvalue counting function. We denote by $\lambda_k^{N,\mu,\R}$ the $k$-th eigenvalue of $-\Delta_N^{\mu,\R}$ resp. $\lambda_k^{D,\mu,\R}$ for $-\Delta_D^{\mu,\R}$ with $k\geq 1$. Now we can define the eigenvalue counting functions
\begin{align*}
N_N^{\mu,\R}(x):=\# \{k\geq 1 | \lambda_k^{N,\mu,\R}\leq x\}\\
N_D^{\mu,\R}(x):=\# \{k\geq 1 | \lambda_k^{D,\mu,\R}\leq x\}
\end{align*}
Since $\D_\R^0\subset \D_R$ and $\dim \D_\R /\D_\R^0=N$ (since this quotient space consists of the harmonic functions) we get
\begin{align*}
N_D^{\mu,\R}(x)\leq N_N^{\mu,\R}(x)\leq N_D^{\mu,\R}(x)+N, \ \forall x
\end{align*}

We want to study the asymptotic behavior of the eigenvalue counting functions. However, we can only calculate the leading term of the eigenvalue counting functions for regular sequences of harmonic structures that fulfill the conditions of chapter~\ref{chap_cond_haus}. In the following paragraph we will state the results for such sequences.

\subsection{Results}
The next theorem summarizes the results for the leading term for various regular sequences of harmonic structures and measures.
\begin{theorem}\label{theospec} Let $\R$ be a regular sequence of harmonic structures that fulfills the conditions and $\mu=\mu_\eta$ with $\eta\in(0,1]$. Then there exist constants $0<C_1,C_2<\infty$ and $x_0\geq 0$ such that for all $x\geq x_0$:
\begin{align*}
C_1x^{\frac 12 d_S^{\R,\mu}}\leq N_D^{\mu,\R}(x)\leq N_N^{\mu,\R}(x)\leq C_2x^{\frac 12 d_S^{\R,\mu}}
\end{align*}
with 
\begin{align*}
d_S^{\R,\mu}=\begin{cases}
\max\{1,\frac{\ln N^2}{\ln N- \ln \lambda}\},  \ \text{for } \mu=\mu_\eta \text{ with } \eta \in (0,1)\\[.2cm]
\max\{1,\frac{\ln N^2}{- \ln( \beta\lambda)}\},  \ \text{for } \mu=\mu_1=\mu_I, \ \text{if} \ \beta\neq \frac 1{N^2\lambda}
\end{cases}
\end{align*}
\end{theorem}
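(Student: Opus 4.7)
The plan is to use Dirichlet--Neumann bracketing at level $n$ together with the rescaling of Lemma~\ref{lemrescaling}, following the blueprint of Kigami--Lapidus and its adaptation in \cite{haus17}, while carefully handling the non-constant sequence $\R$ and the non-self-similar measure $\mu_\eta$.

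First I would use the decomposition $K=K_n\cup J_n$ (where $J_n$ is the closure of $K\setminus K_n$, a finite union of edges $e^{\tilde w}_{c,l}$ with $|\tilde w|\leq n-1$) to split the spectrum via Neumann/Dirichlet conditions at the finite interface $\bigcup_{w\in\A^n}G_w(V_0)\cup C_n$:
\begin{align*}
\sum_{w\in\A^n}N_D^{K_w}(x)+N_D^{J_n}(x)\le N_D^{\mu,\R}(x)\le N_N^{\mu,\R}(x)\le \sum_{w\in\A^n}N_N^{K_w}(x)+N_N^{J_n}(x),
\end{align*}
where the local counting functions are those of the restricted forms with the corresponding boundary conditions. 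By Lemma~\ref{lemrescaling} and the change of variables $u\mapsto u\circ G_w$, each cell counting function on $K_w$ equals the corresponding counting function on $K$ with sequence $\R^{(n)}$, pullback measure $\tilde\mu^{(w)}:=\mu\circ G_w$, and spectral parameter $x\delta_n$. Using the comparisons $\beta^{|w|}\mu_\eta\le\tilde\mu^{(w)}\le(1/N)^{|w|}\mu_\eta$ for $\eta\in(0,1)$ (and the exact equality $\tilde\mu^{(w)}=\beta^{|w|}\mu_I$ for $\eta=1$) together with monotonicity of counting functions in the measure, one bounds each cell term by $N_\ast^{\mu_\eta,\R^{(n)}}(c(\lambda m)^n x)$ with $m=1/N$ if $\eta\in(0,1)$ and $m=\beta$ if $\eta=1$, where $\ast\in\{N,D\}$.

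Second, each edge $e^{\tilde w}_{c,l}$ with $|\tilde w|=k$ is a weighted interval of resistance $\asymp\lambda^k$ and measure $\asymp\beta^k$, so classical Weyl asymptotics give $\tfrac{1}{\pi}\sqrt{(\lambda\beta)^k x}+O(1)$ eigenvalues below $x$. Summing over the $\asymp N^k$ edges at each level $k\le n-1$,
\begin{align*}
N_N^{J_n}(x)\le C\sum_{k=0}^{n-1}N^k\bigl(1+\sqrt{(\lambda\beta)^k x}\bigr).
\end{align*}
Given $x$, I then choose $n=n(x)$ so that $(\lambda m)^{n}x\asymp 1$. This makes $N^{n(x)}\asymp x^{\ln N/(-\ln(\lambda m))}=x^{\frac12 d_S^{\R,\mu}}$ whenever that exponent is at least $\tfrac12$, and the cell counting functions at argument $\asymp 1$ are uniformly bounded in $n$ (by Lemma~\ref{lem53} applied to $\R^{(n)}$, the uniformity coming from the conditions of Section~\ref{conditions} through $\delta_k^{(n)}\asymp\lambda^k$ with constants independent of $n$). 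The edge sum then contributes $O(\sqrt x)$ if $N^2\lambda\beta<1$ and is of the same order as the fractal term if $N^2\lambda\beta>1$. Taking the maximum with the unavoidable edge contribution $x^{1/2}$ yields $d_S^{\R,\mu}=\max\{1,\ln N^2/(-\ln(\lambda m))\}$. The matching lower bound on $N_D$ follows from the Dirichlet side of the bracketing together with the lower contribution $\gtrsim\sqrt{(\lambda\beta)^k x}$ from a single edge at each level.

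The main obstacle is uniformity in $n$: the rescaled operator on each cell carries the shifted sequence $\R^{(n)}$ and, for $\eta\in(0,1)$, a pullback measure $\tilde\mu^{(w)}$ that is not a scalar multiple of $\mu_\eta$; yet the iteration demands uniform control of the local counting functions. Condition~(\ref{eqcond}) is precisely what supplies this: it gives $\delta_k^{(n)}\asymp\lambda^k$ with constants $\kappa_1,\kappa_2$ independent of $n$, so the diameter bound of Lemma~\ref{lem43} and the compactness argument of Lemma~\ref{lem53} apply uniformly to every $\R^{(n)}$. The resonance $\beta=1/(N^2\lambda)$ is excluded because it corresponds to $N\sqrt{\lambda\beta}=1$, where the geometric sum in the edge estimate degenerates to $\asymp n\sqrt x$ and produces an extra logarithmic factor, forbidding a pure power-law leading term.
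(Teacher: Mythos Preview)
Your overall architecture (Dirichlet--Neumann bracketing at level $n$, rescaling via Lemma~\ref{lemrescaling}, Weyl asymptotics on the edges, choice $n=n(x)$) matches the paper's, and the upper bound goes through essentially as you describe. There is, however, a genuine gap in your lower bound for $\eta\in(0,1)$.

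You claim that the measure comparison $\beta^{|w|}\mu_\eta\le\tilde\mu^{(w)}\le(1/N)^{|w|}\mu_\eta$ lets you bound \emph{both} cell counting functions by $N_\ast^{\mu_\eta,\R^{(n)}}(c(\lambda/N)^n x)$. For the Neumann upper bound this is fine: $\tilde\mu^{(w)}\le N^{-n}\mu_\eta$ indeed yields $N_N^{K_w}(x)\le N_N^{\mu_\eta,\R^{(n)}}(N^{-n}\delta_n x)$. But for the Dirichlet lower bound you would need $\tilde\mu^{(w)}\ge c' N^{-n}\mu_\eta$ with $c'$ independent of $n$, and this is \emph{false}: on the edge $e_{c,l}$ one has $\tilde\mu^{(w)}(e_{c,l})=\eta\beta^n a_{c,l}$ while $N^{-n}\mu_\eta(e_{c,l})=\eta N^{-n}a_{c,l}$, and $(\beta N)^n\to 0$. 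The only valid lower comparison is $\tilde\mu^{(w)}\ge\beta^{n}\mu_\eta$, which after your choice of $n$ produces the exponent $\ln N/(-\ln(\beta\lambda))$, i.e.\ the $\eta=1$ exponent, strictly smaller than the claimed $\ln N/\ln(N/\lambda)$ since $\beta<1/N$. So your bracketing does not close.

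The paper circumvents this precisely by \emph{not} reducing the lower bound to a fixed reference problem via measure comparison. Instead it constructs, in each $m$-cell $K_w$, an explicit test function $u_m\in\D_{\R,w}^0$ that is identically $1$ on a sub-cell $K_{\tilde w}$ (with $|\tilde w|=m+2j$, $j$ fixed), and then estimates $\|u_m\|_\mu^2\ge\mu(K_{\tilde w})\ge(1-\eta)N^{-(m+2j)}$ using \emph{only} the $\mu_\Sigma$ part of the measure. This decouples the lower bound from the bad edge-scaling $\beta^n$ and recovers the correct exponent. If you want to salvage your rescaling viewpoint, the analogous move is to compare $\tilde\mu^{(w)}\ge(1-\eta)N^{-n}\mu_\Sigma$ and then bound the first Dirichlet eigenvalue for $(\E_{\R^{(n)}},\mu_\Sigma)$ uniformly in $n$; but that again requires a test-function construction, so nothing is saved.

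A minor point: your justification that the cell counting functions at argument $\asymp 1$ are uniformly bounded cites Lemma~\ref{lem53} plus condition~(\ref{eqcond}). Lemma~\ref{lem53} gives discreteness for a \emph{single} $\R$, not uniformity across $\R^{(n)}$; what you actually need is the uniform Poincar\'e inequality, which follows from the uniform resistance-diameter bound of Lemma~\ref{lem48} (depending only on $\lambda^\ast,\rho^\ast,r_0$) and is used explicitly in the paper. Condition~(\ref{eqcond}) is not needed for that step; it enters only to convert $\delta_n$ into $\lambda^n$.
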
\vspace*{0.3cm}
The leading term is the maximum of the two values. One value corresponds to the fractal part inside the stretched fractal. However, if $\lambda$ gets too small the one-dimensional lines become the dominant part and leading term becomes $1$.\\

The constants $C_1$ and $C_2$ depend on $\R$ and $\mu$. We call the value $d_S^{\R,\mu_{0.5}}(K)=:d_S^{\R}(K)$ the spectral dimension of the stretched fractal $K$. We see that the scaling parameter $\beta$ of the line part of the measure doesn't appear in the leading term if the fractal part of the measure exists. \\

We see that the choice of the regular sequence of harmonic structures as well as the choice of the measure has a big influence on the analysis on $K$. 
\begin{remark}In Theorem~\ref{theodim} we calculated $\dim_{H,R_\R}(K)=\max\{1,\frac{\ln N}{-\ln \lambda}\}$. We see that the following relation holds:
\begin{align*}
d_S^{\R}(K)=\frac{2\dim_{H,R_\R}(K)}{\dim_{H,R_\R}(K)+1}
\end{align*}
This relation was shown to hold for p.c.f. self-similar sets in $\cite{kl93}$ and we just saw that it is also valid for stretched fractals.
\end{remark}
\subsection{Examples}
We want to list the values for the examples for which we calculated the harmonic structures and compare them to the self-similar case. The measure $\tilde\mu_\Sigma$ that we use in the self-similar case is the self-similar measure that assigns each n-cell the same weight.

\renewcommand{\arraystretch}{2.3}
 \begin{center}
  \begin{tabular}{|C{3cm}|C{1.8cm}|C{3cm}| C{3cm}|}
\cline{2-4}
 \multicolumn{1}{c|}{}&  \multicolumn{3}{c|}{$d_S^{\mu,\R}$ } \\
\cline{2-4}
 \multicolumn{1}{c|}{}& self-similar & \multicolumn{2}{c|}{stretched}\\
 \hline 
 Measure & $\tilde \mu_\Sigma$ & $\mu_\eta$,\ $\eta\in(0,1)$ &$\mu_1$\\
 \hline \hline
 & &  $\max\{1,\frac{\ln 9}{\ln 3- \ln \lambda}\}$ &  $\max\{1,\frac{\ln 9}{- \ln \beta \lambda}\}$ \\
\cline{3-4}
Sierpinski Gasket&$\frac{\ln 9}{\ln 5} $  &\multicolumn{2}{c|}{$ \lambda\in(0,\frac 35]$} \\
\cline{3-4}
&&$\beta \in (0,\frac 13)$&$\beta \in (0,\frac 13), \beta\neq \frac 1{9\lambda}$ \\

\hline 
 & & $\max\{1,\frac{2\ln 6}{\ln 6- \ln \lambda}\}$ &  $\max\{1,\frac{2\ln 6}{- \ln \beta \lambda}\}$  \\
\cline{3-4}
\begin{minipage}[c][0.5cm]{\linewidth}\begin{center}  Level 3 Sierpinski Gasket\end{center}\end{minipage} & $\frac{2\ln 6}{\ln 6-\ln\frac 7{15}} $&\multicolumn{2}{c|}{$  \lambda\in(0,\frac 7{15}]$}\\
\cline{3-4}
& &$\beta \in (0,\frac 16)$&$\beta \in (0,\frac 16), \beta\neq \frac 1{6^2\lambda}$ \\
\hline 
 &  & $\max\{1,\frac{2\ln(d+1)}{\ln (d+1)- \ln \lambda}\}$&  $\max\{1,\frac{2\ln(d+1)}{- \ln \beta \lambda}\}$\\
\cline{3-4}
\begin{minipage}[c][0.5cm]{\linewidth}\begin{center}  Sierpinski Gasket in $\mathbb{R}^d$\end{center}\end{minipage}&$\frac{2\ln(d+1)}{\ln(d+3) } $&\multicolumn{2}{c|}{$ \lambda\in(0,\frac {d+1}{d+3}]$}\\
\cline{3-4}
&&$\beta \in (0,\frac 1{d+1})$& \renewcommand{\arraystretch}{1.2} \begin{tabular}{c} $\beta \in (0,\frac 1{d+1})$,\\$ \beta\neq \frac 1{(d+1)^2\lambda}$ \end{tabular}\renewcommand{\arraystretch}{2} \\
\hline 
 &   & $\max\{1,\frac{2\ln 5}{\ln 5- \ln \lambda}\}$& $\max\{1,\frac{2\ln 5}{- \ln \beta \lambda}\}$\\
\cline{3-4}
Vicsek Set&$\frac{2\ln 5}{\ln 15} $&\multicolumn{2}{c|}{ $  \lambda\in(0,\frac 13]$} \\ 
\cline{3-4}
&&$\beta \in (0,\frac 15)$&$\beta \in (0,\frac 15), \beta\neq \frac 1{5^2\lambda}$ \\
 \hline 
&& $\max\{1, \frac{\ln 4}{\ln 2-\ln\lambda} \}$& $ \max\{1,\frac{\ln 4}{-\ln\beta\lambda}  \}$\\
\cline{3-4}
Hata's tree & $\frac{\ln 4}{\ln 4-\ln (\sqrt{5}-1)}$&\multicolumn{2}{c|}{ $  \lambda\in(0,\frac{\sqrt{5}-1}2)$} \\ 
\cline{3-4}
&&$\beta \in (0,\frac 12)$&$\beta \in (0,\frac 12), \beta\neq \frac 1{4\lambda}$ \\
 \hline 
 \end{tabular} 
 \end{center}
\renewcommand{\arraystretch}{1}

The values for the self-similar column come from the result from \cite{kl93} together with the renormalization factors for these examples. As for the Hausdorff dimension the values for $d_S^{\mu,\R}$ are less or equal than the corresponding values in the self-similar case. They can reach the same value in all examples but Hata's tree.

\subsection{Proof of Theorem~\protect\ref{theospec}}
We will carry out the proof for $\mu=\mu_\eta$ with $\eta\in(0,1)$ and in the end show what happens for $\mu=\mu_1$. The main technique for the proof is the Dirichlet-Neumann bracketing as in \cite{kaj10} where it was applied to self-similar sets. We split the proof in the upper and lower estimate.
\subsubsection{Upper estimate}
We obtain the upper estimate by successively adding new Neumann boundary conditions at the points $V_m\backslash V_0$ thus making the domain bigger and, therefore, increasing the eigenvalue counting function. We can introduce the Neumann conditions by defining the following domains:
\begin{align*}
\D_\R^{K_m}&:=\{u|u\in L^2(K_m,\mu|_{K_m}),\ \exists f\in \D_\R : f|_{K_m}=u\}\\
\D_\R^{J_m}&:=\{u|u\in L^2(J_m,\mu|_{J_m}), \ \forall e^w_{c,l}\subset J_m \exists f\in \D_\R : f|_{e^w_{c,l}}=u|_{e^w_{c,l}}\}
\end{align*}
Since the lines $e^w_{c,l}$ in $J_m$ are decoupled by the new Neumann boundary conditions we can see that 
\begin{align*}
\D_\R^{J_m}=\bigoplus_{\substack{c\in \C, l\in\{1,\ldots,\rho(c)\}\\w\in A^k, k<m-1}} H^1(e^w_{c,l})
\end{align*}
We also notice that $\D_\R^{K_m}\perp \D_\R^{J_m}$ and 
\begin{align*}
\D_\R\subset \D_\R^{K_m}\oplus\D_\R^{J_m}
\end{align*}
We can define a new quadratic form $\tilde \E_\R$ on this bigger domain for $f=g+h$ with $g\in \D_\R^{K_m}$ and $h\in \D_\R^{J_m}$.
\begin{align*}
\tilde \E_\R(f):=\E^\Sigma_\R(g)+\sum_{k=m+1}^\infty \frac 1{\gamma_k}\D_{\boldsymbol{\rho}^k,k}(g)+\sum_{k=1}^m\frac 1{\gamma_k}\D_{\boldsymbol{\rho}^k,k}(h)
\end{align*}
and
\begin{align*}
\E_\R^{K_m}(g)&:=\E^\Sigma_\R(g)+\sum_{k=m+1}^\infty \frac 1{\gamma_k}\D_{\boldsymbol{\rho}^k,k}(g)\\
\E_\R^{J_m}(h)&:=\sum_{k=1}^m\frac 1{\gamma_k}\D_{\boldsymbol{\rho}^k,k}(h)
\end{align*}
\begin{lemma} \label{lem72}
 $(\tilde{\mathcal{E}}_\R,\mathcal{D}_\R^{K_m}\oplus\mathcal{D}_\R^{J_m})$, $(\mathcal{E}_\R^{K_m},\mathcal{D}_\R^{K_m})$ and $(\mathcal{E}_\R^{J_m},\mathcal{D}_\R^{J_m})$ are regular Dirichlet forms with discrete non-negative spectrum and $\tilde{\mathcal{E}}_\R=\mathcal{E}_\R^{K_m}\oplus\mathcal{E}_\R^{J_m}$.
\end{lemma}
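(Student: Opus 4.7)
The plan is to treat each of the three forms separately, then recognize $\tilde\E_\R = \E_\R^{K_m}\oplus\E_\R^{J_m}$ as built directly into the definitions. First I would analyze $(\E_\R^{J_m},\D_\R^{J_m})$. Since no term of $\E_\R^{J_m}$ couples distinct line segments $e^w_{c,l}$, the stated direct-sum expression for $\D_\R^{J_m}$ over the finitely many lines contained in $J_m$ is an orthogonal decomposition; on each segment the form is a positive scalar multiple of the classical Dirichlet integral on $H^1[0,1]$. Under the measure $\mu_\eta|_{e^w_{c,l}}$, which is a positive scalar multiple of Lebesgue, this is the standard Neumann $H^1$ form on an interval, hence a regular Dirichlet form on $L^2$ with discrete non-negative spectrum via the compact embedding $H^1([0,1])\hookrightarrow L^2([0,1])$. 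A finite orthogonal direct sum of regular Dirichlet forms is again a regular Dirichlet form whose spectrum is the union of the individual discrete spectra.

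Second, I would treat $(\E_\R^{K_m},\D_\R^{K_m})$ using Lemma~\ref{lemrescaling}. The cells $K_w$ with $w\in\A^m$ are pairwise disjoint: in the original p.c.f. set $F$ distinct $1$-cells meet only at points of $\C$, but after stretching the points $c\in\C$ belong solely to the connecting lines $e_{c,l}$ and to no $G_i(K)$; iterating this yields $K_w\cap K_{w'}=\emptyset$ for $w\neq w'$ with $|w|=|w'|=m$, so $L^2(K_m,\mu_\eta|_{K_m})=\bigoplus_{w\in\A^m} L^2(K_w,\mu_\eta|_{K_w})$. Combining the definition of $\E_\R^{K_m}$ with the rescaling formula gives
\begin{align*}
\E_\R^{K_m}(g)=\sum_{w\in\A^m}\frac 1{\delta_m}\E_{\R^{(m)}}(g\circ G_w),
\end{align*}
and $\R^{(m)}=(\lambda_{m+i},\boldsymbol\rho^{m+i})_{i\geq 1}$ is itself a regular sequence of harmonic structures. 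By Theorem~\ref{theoresform} each summand, pulled back via $G_w$, is (up to the scalar $\delta_m^{-1}$) the full resistance form $(\E_{\R^{(m)}},\F_{\R^{(m)}})$ on a copy of $K$, so by Lemma~\ref{lem51} it yields a regular Dirichlet form on $L^2(K_w,\mu_\eta|_{K_w})$ and by Lemma~\ref{lem53} it has discrete non-negative spectrum. The $N^m$-fold orthogonal direct sum then produces the desired properties for $(\E_\R^{K_m},\D_\R^{K_m})$.

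Finally, since $K_m\cap J_m$ consists of only finitely many endpoints of lines and is therefore $\mu_\eta$-null, we have $L^2(K,\mu_\eta)=L^2(K_m,\mu_\eta|_{K_m})\oplus L^2(J_m,\mu_\eta|_{J_m})$ as Hilbert spaces, and the definition of $\tilde\E_\R$ on $f=g+h$ with $g\in\D_\R^{K_m}$, $h\in\D_\R^{J_m}$ reads off exactly as $\tilde\E_\R=\E_\R^{K_m}\oplus\E_\R^{J_m}$; the direct sum of two regular Dirichlet forms with discrete non-negative spectra is again of the same type, with spectrum the union of the two. The main obstacle will be verifying cleanly that $K_w\cap K_{w'}=\emptyset$ for distinct $m$-cells and the resulting identification of $\E_\R^{K_m}$ on each cell with a rescaled copy of the full form on $K$ under $\R^{(m)}$; once this decomposition is in hand, the remainder reduces to the already-proved Theorem~\ref{theoresform}, Lemma~\ref{lem51} and Lemma~\ref{lem53}, together with classical Neumann theory on intervals.
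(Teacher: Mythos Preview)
Your argument is correct but follows a different route from the paper for the $K_m$ part. The paper treats $(\E_\R^{K_m},\D_\R^{K_m})$ abstractly as the restriction of the resistance form $(\E_\R,\F_\R)$ to the closed set $K_m$: by Kigami's trace theorem \cite[Theo.~8.4]{kig12} this restriction is again a regular resistance form whose resistance metric is $R_\R|_{K_m\times K_m}$; since $K_m$ is closed in the compact space $(K,R_\R)$ it is compact, and then regularity as a Dirichlet form and discreteness of the spectrum follow exactly as in Lemmas~\ref{lem51} and~\ref{lem53}. Your approach instead exploits the concrete self-similar-like structure: you use that the stretched $m$-cells $K_w$ are pairwise disjoint (indeed positively separated, as observed in the proof of Lemma~\ref{lem44}), apply the rescaling Lemma~\ref{lemrescaling} to identify the form on each $K_w$ with $\delta_m^{-1}\E_{\R^{(m)}}$ via $G_w$, and then invoke Theorem~\ref{theoresform}, Lemma~\ref{lem51} and Lemma~\ref{lem53} for the shifted regular sequence $\R^{(m)}$. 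The paper's route is shorter and avoids checking the cell disjointness and the extension step needed to see that $\D_\R^{K_m}$ really equals the direct sum of the pulled-back domains; your route, on the other hand, makes the later eigenvalue estimates more transparent, since it displays $(\E_\R^{K_m},\D_\R^{K_m})$ explicitly as $N^m$ independent rescaled copies of the full form. One small point to tighten: when you invoke Lemma~\ref{lem51} on each cell, the relevant measure is the pullback $\mu_\eta\circ G_w$, which is only a scalar multiple of some $\mu_{\eta'}$; this is harmless because the proof of Lemma~\ref{lem51} via \cite[Cor.~6.4, Theo.~9.4]{kig12} only requires a finite Radon measure of full support.
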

\begin{proof}
$(\mathcal{E}_\R^{J_m},\mathcal{D}_\R^{J_m})$ is just the sum of scaled Dirichlet energies on one-dimensional edges, hence it is a regular Dirichlet form on $L_2(J_m,\mu|_{J_m})$ with discrete non-negative spectrum. Since $K_m$ is closed $(\mathcal{E}_\R^{K_m},\mathcal{D}_\R^{K_m})$ is a regular resistance form due to \cite[Theo. 8.4]{kig12} and hence a regular Dirichlet form on $L_2(K_m,\mu|_{K_m})$ with \cite[Theo. 9.4]{kig12}. Due to the same Theorem \cite[Theo. 8.4]{kig12} it follows that the associated resistance metric equals the restriction of $R_\R$ to ${K_m}\times{K_m}$. Therefore, since $K_m$ is closed we know that $(K_m,R_\R|_{K_m})$ is compact. The proof for discrete non-negative spectrum works like in the proof of Lemma~\ref{lem53}.
The results for $\tilde{\mathcal{E}}_\R$ follow immediately.
\end{proof}
The eigenvalue counting function has many dependencies. For a Dirichlet form~$\E$ with domain $\D$ in the Hilbert space $L^2(K,\mu)$ we denote the eigenvalue counting function at point $x$ by $N(\E,\D,\mu,x)$. This is the same as the eigenvalue counting function of the self-adjoint operator associated to the Dirichlet form. In our case the measure is always $\mu$ or its restriction to the particular part. We will therefore omit it in the notation.

For the eigenvalue counting functions of the newly introduced Dirichlet forms this means:
\begin{align*}
N_N^{\mu,\R}(x)\leq N(\E_\R^{K_m},\D_\R^{K_m},x)+N(\E_\R^{J_m},\D_\R^{J_m},x), \ \forall x\geq 0 
\end{align*}
The introduction of the new Neumann boundary conditions leads to the decoupling of the $m$-cells and the lines adjoining them. Therefore, the calculations can be done separately. We start with $(\E_\R^{K_m},\D_\R^{K_m})$ which we will call the fractal part.\\

\textbf{U.1: Fractal part $(\E_\R^{K_m},\D_\R^{K_m})$}\\[.1cm]
Define new measures on $K$ as follows for $w\in \A^\ast$.
\begin{align*}
\mu^w:=\mu(K_w)^{-1}\mu\circ G_w
\end{align*}
$\mu^w$ is a measure on the whole $K$ but it only reflects the features of $\mu$ on $K_w$. We notice a few immediate properties.
\begin{align*}
\mu^w(K)=\mu(K_w)^{-1}\mu(K_w)=1, \ \forall w
\end{align*}
as well as
\begin{align*}
\int_K u\circ G_wd\mu^w=\mu(K_w)^{-1}\int_{K_w} ud\mu
\end{align*}

Now for the upper estimate of the fractal part we use the so called \textit{uniform Poincar\'e inequality} (see \cite{kaj10}) for a $C_{PI}\in (0,\infty)$. We define $\R^{(0)}:=\R$, then it holds for all $n\geq 0$ that for all $u\in \D_{\R^{(n)}}$
\begin{align*}
\E_{\R^{(n)}}(u)\geq C_{PI} \int_K |u-\overline u^{\mu^w}|^2d\mu^w
\end{align*}
where $\overline u^\nu=\int_K u d\nu$. The constant $C_{PI}$ is independent of $n$ as well as $w$. This can be seen easily. 
Due to Lemma~\ref{lem48} we know that there is a constant $\mathcal{M}\in(0,\infty)$ only depending on $\lambda^\ast$, $\rho^\ast$ and $r_0$ with
\begin{align*}
R_{\R^{(n)}}(p,q)\leq \mathcal{M}, \ \forall p,q\in K ,  \ \forall n
\end{align*}
\begin{align*}
\mathcal{M}\mathcal{E}_{\R^{(n)}}(u)\geq R_{\R^{(n)}}(p,q)\mathcal{E}_{\R^{(n)}}(u)&\geq |u(p)-u(q)|^2\\[0.2cm]
\Rightarrow \int_K\int_K \mathcal{M}\mathcal{E}_{\R^{(n)}}(u) d\mu^w(q)d\mu^w(p)&\geq \int_K\int_K |u(p)-u(q)|^2d\mu^w(q)d\mu^w(p)\\
&\geq \int_K \left( u(p)-\int_K u(q)d\mu^w(q)\right)^2d\mu^w(p)\\
&=\int_K|u(p)-\overline u^{\mu^w}|^2d\mu^w(p)
\end{align*}
\begin{align*}
\Rightarrow \mathcal{E}_{\R^{(n)}}(u)\geq \frac 1{\mathcal{M}\mu^w(K)^2}\int_K |u-\overline u^{\mu^w}|^2d\mu^w &=\frac 1{\mathcal{M}}\int_K |u-\overline u^{\mu^w}|^2d\mu^w
\end{align*}
That means we have $C_{PI}=\frac 1{\mathcal{M}}$ which holds for all $\R^{(n)}$.\\

We have $N^m$ independent cells in $K_m$ that means the first $N^m$ eigenvalues are all $0$, because the functions that are constant on each $m$-cell are in $\D_\R^{K_m}$. We are interested in the first non-zero eigenvalue which we will call $\lambda^m_{N^m+1}$.

Let $u\in \D_\R^{K_m}$ be the normalized eigenfunction to this eigenvalue $\lambda^m_{N^m+1}$, then $u$ is orthogonal to every $v$ that is constant on the $m$-cells, since this is a linear combination of eigenfunctions to lower eigenvalues. 
\begin{align*}
\lambda_{N^m+1}^m&=\E_\R^{K_m}(u)\\
&\stackrel{(1)}{=}\frac 1{\delta_m}\sum_{w\in \A^m} \E_{\R^{(n)}}(u\circ G_w)\\
&\stackrel{PI}{\geq}\frac 1{\kappa_2 \lambda^m} \sum_{w\in \A^m} C_{PI}\underbrace{\int_K|u\circ G_w-\overline{u\circ G_w}^{\mu^w}|^2d\mu^w}_{=:\star}
\end{align*}
In (1) we used the rescaling of the energy (Lemma~\ref{lemrescaling}).
For $\star$ we have
\begin{align*}
&\int_K (u\circ G_w -\overline{u\circ G_w}^{\mu^w})^2d\mu^w\\
&\hspace*{1.5cm}=\int_K (u\circ G_w)^2d\mu^w - 2\int_K u\circ G_w\cdot \overline{u\circ G_w}^{\mu^w}d\mu^w + \underbrace{\int_K  (\overline{u\circ G_w}^{\mu^w})^2d\mu^w}_{\geq 0}\\
&\hspace*{1.5cm}\geq \frac 1{\mu(K_w)}\int_{K_w} u^2d\mu-2\frac 1{\mu(K_w)}\underbrace{\int_{K_w} u \cdot \overline{u\circ G_w}^{\mu^w}d\mu}_{=0, \text{ since u orth. on const.}}\\
&\hspace*{1.5cm}= \frac 1{\mu(K_w)} \int_{K_w} u^2d\mu 
\end{align*}
Back to $\lambda_{N^m+1}^m$:
\begin{align*}
\Rightarrow \lambda^m_{N^m+1} &\geq \frac 1{\kappa_2 \lambda^m}\sum_{w\in\mathcal{A}^m} C_{PI}\frac 1{\mu(K_w)} \int_{K_w} u^2d\mu \\
&\geq \lambda^{-m} \frac {C_{PI}}{\kappa_2\max \mu(K_w)}\int_K u^2d\mu \\
&\geq \frac{\lambda^{-m}}{N^{-m}} \frac{C_{PI}}{\kappa_2}=C_u\left(\frac N\lambda\right)^m
\end{align*}
We have, $\lambda_{N^m+1}^m\geq C_u (N/\lambda)^m$, that means
 $$x< C_u(N/\lambda)^m \Rightarrow N(\mathcal{E}_\R^{K_m},\mathcal{D}_\R^{K_m},x)\leq N^m$$
For $x\geq C_u$ take $m\in \mathbb{N}$ such that $C_u(N\lambda^{-1})^{m-1}\leq x<  C_u(N\lambda^{-1})^m$. (It's always true that $N\lambda^{-1}>1$.)
\begin{align*}
\Rightarrow N(\mathcal{E}_\R^{K_m},\mathcal{D}_\R^{K_m},x)&\leq N^m \leq N\cdot N^{m-1}= N \left( \left(\frac N\lambda\right)^{\frac{\ln(N)}{\ln(N/\lambda)}}\right)^{m-1}\\
&=N \left(\left( \frac N\lambda\right)^{m-1}\right)^{\frac{\ln(N)}{\ln(N/\lambda)}}\leq N\left( \frac x{C_u} \right)^{\frac{\ln(N)}{\ln(N/\lambda)}}\\
&\leq \underbrace{N C_u^{-{\frac{\ln(N)}{\ln(N/\lambda)}}}}_{C_2^\prime:=} x^{\frac{\ln(N)}{\ln(N/\lambda)}}
\end{align*}

\textbf{U.2 Line part $(\E_\R^{J_m},\D_\R^{J_m})$}\\[.1cm]
Due to the decoupling through the Neumann boundary conditions the domain and form split into
\begin{align*}
\mathcal{E}^{J_m}_\R&=\bigoplus_{\begin{array}{c} c\in \C, l\in\{1,\ldots,\rho(c)\}\\ w\in \A^n , n<m\end{array} }\frac 1{\gamma_{|w|+1}\rho^{|w|+1}_{c,l}} \int_0^1 \left(\frac{d(\cdot\circ \xi_{e^w_{c,l}})}{dx}\right)^2 d\mu\\
\mathcal{D}_\R^{J_m}&=\bigoplus_{\begin{array}{c}c\in \C, l\in\{1,\ldots,\rho(c)\}\\ w\in \A^n , n<m\end{array} }H^1(e^w_{c,l})
\end{align*}
Then it holds for the eigenvalue counting function that
\begin{align*}
N&(\mathcal{E}^{J_m}_\R,\mathcal{D}^{J_m}_\R,x)= \\&\sum_{\begin{array}{c} c\in \C, l\in\{1,\ldots,\rho(c)\}\\ w\in \A^n , n<m\end{array} }N\left(\frac 1{\gamma_{|w|+1}\rho^{|w|+1}_{c,l}} \int_0^1 \left(\frac{d(\cdot\circ \xi_{e^w_{c,l}})}{dx}\right)^2 d\mu, H^1(e^w_{c,l}),x\right)
\end{align*}
The scaling parameter for the measure on the line part scales the integral in the following way:
\begin{align*}
\int_0^1 \left(\frac{d(u\circ \xi_{e^w_{c,l}})}{dx}\right)d\mu =\frac 1{(1-\eta)a_{c,l}\beta^{|w|}}\int_0^1\left(\frac{d(u\circ \xi_{e^w_{c,l}})}{dx}\right)^2dx
\end{align*}
Therefore, there is a one-to-one correspondence of the eigenvalues between the standard Neumann Laplacian on $(0,1)$ and the restriction of the energy to one edge.
\begin{align*}
N\left(\frac 1{\gamma_{|w|+1}\rho^{|w|+1}_{c,l}} \int_0^1 \left(\frac{d(\cdot\circ \xi_{e^w_{c,l}})}{dx}\right)^2 d\mu, H^1(e^w_{c,l}),x\right)&\\=N(-\Delta_N|_{(0,1)},&(1-\eta)a_{c,l}\beta^{|w|}\gamma_{|w|+1}\rho^{|w|+1}_{c,l}x)
\end{align*}
With 
\begin{align*}
N(-\Delta_N|_{(0,1)},x)\leq \frac 1\pi \sqrt{x}+1, \ \forall x\geq 0
\end{align*}
we get
\begin{align*}
N(\E_\R^{J_m},\D_\R^{J_m},x)&=\hspace*{1.3cm}\smashoperator[lr]{\sum_{\begin{array}{c}c\in \C, l\in\{1,\ldots,\rho(c)\}\\ w\in \A^n , n<m\end{array}} }\hspace*{1cm}N(-\Delta_N|_{(0,1)},(1-\eta)a_{c,l}\beta^{|w|}\gamma_{|w|+1}\rho^{|w|+1}_{c,l}x)\\
&\leq \sum_{\begin{array}{c}c\in \C, l\in\{1,\ldots,\rho(c)\}\\ w\in \A^n , n<m\end{array}} \frac 1\pi \sqrt{(1-\eta)a_{c,l}\beta^{|w|}\gamma_{|w|+1}\rho^{|w|+1}_{c,l}x}
\end{align*}
Since $(1-\eta)\leq 1$ as well as $a_{c,l}\leq 1$ and $\rho^{k}_{c,l}\leq \rho^\ast$ we have \setcounter{equation}{1} 
\begin{align}
N(\E_\R^{J_m},\D_\R^{J_m},x)&\leq \sum_{\begin{array}{c}c\in \C, l\in\{1,\ldots,\rho(c)\}\\ w\in \A^n , n<m\end{array}} \frac 1\pi \sqrt{\beta^{|w|}\gamma_{|w|+1}\rho^\ast x}+1\nonumber\\
&=\sum_{\begin{array}{c} w\in\A^n \\ n<m\end{array}}\#E_I^1( \frac{ \rho^\ast}\pi\sqrt{\beta^{|w|}\gamma_{|w|+1} x}+1)\nonumber\\
&=\sum_{k=0}^{m-1} N^k\#E_I^1( \frac{ \rho^\ast}\pi\sqrt{\beta^{k}\gamma_{k+1} x}+1)\nonumber\\
&=\sum_{k=0}^{m-1} \#E_I^1 N^k + \sum_{k=0}^{m-1}\#E_I^1\frac {\rho^\ast}\pi\sqrt{N^{2k}\beta^k\gamma_{k+1} x}\nonumber\\
&\leq\#E_I^1 \frac{N^m-1}{N-1}+\sum_{k=0}^{m-1}\#E_I^1\frac {\rho^\ast\sqrt{\kappa_2}}\pi\sqrt{N^{2k}\beta^k\lambda^k x}\nonumber\\
&\leq \frac {\#E_I^1}{N-1} N^m+ \frac{\#E_I^1\rho^\ast\sqrt{\kappa_2x}}\pi \sum_{k=0}^{m-1} \sqrt{N^2\beta \lambda}^k\label{equpperbound}
\end{align}
From here on we have to distinguish a few cases:
\begin{enumerate}
\item $\lambda>\frac 1N$ and $\frac 1{N^2\lambda}\leq\beta<\frac 1N$
\item ($\lambda>\frac 1N$ and $0<\beta<\frac 1{N^2\lambda}$) or $\lambda \leq \frac 1N$
\end{enumerate}

Let us consider the first case and additionally assume that $\beta\neq \frac 1{N^2\lambda}$. Then $N^2\beta\lambda>1$ and we get from (\ref{equpperbound}):
\begin{align*}
N(\E_\R^{J_m},\D_\R^{J_m},x)&\leq \frac {\#E_I^1}{N-1} N^m+ \frac{\#E_I^1\rho^\ast\sqrt{\kappa_2}}{\pi(\sqrt{N^2\beta\lambda}-1)}\sqrt{N^2\beta \lambda}^m\sqrt x
\end{align*}
For the fractal part we chose $m$ according to $x$ by $C_u(N\lambda^{-1})^{m-1}\leq x<  C_u(N\lambda^{-1})^m$. Therefore, 
\begin{align*}
N(\E_\R^{J_m},\D_\R^{J_m},x)&\leq \frac {\#E_I^1}{N-1} N^m+ \frac{\#E_I^1\rho^\ast\sqrt{\kappa_2}}{\pi(\sqrt{N^2\beta\lambda}-1)}\sqrt{N^2\beta\lambda}^m \sqrt{C_u(N\lambda^{-1})^m}\\
&=\frac {\#E_I^1}{N-1} N^m+ \frac{\#E_I^1\rho^\ast\sqrt{\kappa_2C_u}}{\pi(\sqrt{N^2\beta\lambda}-1)}\sqrt{N^3\beta}^m
\end{align*}
Since $\beta<\frac 1N$ we get
\begin{align*}
N(\E_\R^{J_m},\D_\R^{J_m},x)&\leq \frac {\#E_I^1}{N-1} N^m+ \frac{\#E_I^1\rho^\ast\sqrt{\kappa_2C_u}}{\pi(\sqrt{N^2\beta\lambda}-1)}N^m
\end{align*}
Now if $\beta=\frac 1{N^2\lambda}$ we can change to $\tilde \beta:=\beta+\epsilon$ with $\frac 1{N^2\lambda}< \tilde \beta <\frac 1N$ and still get the result.\\

This means we get a constant $C_2^{\prime\prime}$ such that for $x$ with $C_u(N\lambda^{-1})^{m-1}\leq x<  C_u(N\lambda^{-1})^m$ we have
\begin{align*}
N(\E_\R^{J_m},\D_\R^{J_m},x)\leq C_2^{\prime\prime} N^m
\end{align*}
With the same calculations as for the fractal part we get the same order $\frac{\ln (N)}{\ln (N/\lambda)}$ for the upper bound. That means for $x\geq C_u$ there is a constant $C_2$, such that
\begin{align*}
N_N^{\mu,\R}(x)\leq C_2 x^{\frac{\ln (N)}{\ln (N/\lambda)}}
\end{align*}

We still have to show the second case. Here we always have $N^2\beta\lambda<1$. This means we get from (\ref{equpperbound}):
\begin{align*}
N(\E_\R^{J_m},\D_\R^{J_m},x)&\leq \frac {\#E_I^1}{N-1} N^m+ \frac{\#E_I^1\rho^\ast\sqrt{\kappa_2x}}\pi \sum_{k=0}^{\infty} \sqrt{N^2\beta \lambda}^k\\
&=\leq \frac {\#E_I^1}{N-1} N^m+ \frac{\#E_I^1\rho^\ast\sqrt{\kappa_2}}\pi \frac{1}{1-\sqrt{N^2\beta\lambda}}\cdot x^\frac 12
\end{align*}
For the first term with $N^m$ the calculation from before gives us the upper bound with order $\frac{\ln(N)}{\ln(N/\lambda)}$. Now if $\lambda>\frac 1N$ this is bigger than $\frac 12$ and thus it is the bigger order of asymptotic growing. \\

However, if $\lambda\leq \frac 1N$ we have $\frac{\ln(N)}{\ln(N/\lambda)}\leq \frac 12$ and thus $x^{\frac 12}$ is the leading term. \\

These estimates give us the desired upper bounds.
\subsubsection{Lower estimate}
The idea to get a lower bound is to add new Dirichlet boundary conditions on $V_m$ which makes the domain smaller and thus lowers the eigenvalue counting function.
\begin{align*}
\D_{\R,m}^0&:=\{u|u\in \D_\R^0,\ u|_{V_m}\equiv 0\}\\
\D_{\R,w}^0&:=\{u|u \in \D_{\R,m}^0 ,\ u|_{K_w^c}\equiv 0\}, \ w\in \A^m\\
\D_{\R, e^w_{c,l}}^0&:= \{u|u\in D_{\R,m}^0 ,\ u|_{(e^w_{c,l})^c}\equiv 0\},\ w\in \A^k, k<m 
\end{align*}
With this we have
\begin{lemma}\label{lem73}
$(\mathcal{E}_\R,\mathcal{D}_{\R,m}^0)$, $(\mathcal{E}_\R,\mathcal{D}_{\R,w}^0)$ and $(\mathcal{E}_\R,\mathcal{D}^0_{\R,e_{c,l}^w})$ are regular Dirichlet forms with discrete non-negative spectrum.
\end{lemma}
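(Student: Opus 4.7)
The plan is to carry out all three cases by repeated application of the trace construction for regular Dirichlet forms used in Lemma~\ref{lem52}, namely \cite[Theorem 10.3]{kig12} (equivalently \cite[Theorem 4.4.3]{fot}), and then to obtain discrete non-negative spectrum by the compact-embedding argument of Lemma~\ref{lem53}. For the first form: $V_m=P_m\cup C_m$ is finite, hence closed in $(K,R_\R)$. Since $(\E_\R,\D_\R)$ is regular by Lemma~\ref{lem51}, imposing Dirichlet conditions on $V_m$ yields via the cited theorems a regular Dirichlet form on $L^2(K\setminus V_m,\mu_\eta|_{K\setminus V_m})$, precisely as in Lemma~\ref{lem52} but with the larger boundary set $V_m$ in place of $V_0$.

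For the cell form $(\E_\R,\D_{\R,w}^0)$: although $K_w^c$ itself is open, continuity of $u$ together with $u|_{K_w^c}\equiv 0$ forces $u$ to vanish on $\overline{K_w^c}$, which is closed in $K$. Combined with $u|_{V_m}\equiv 0$, this is a Dirichlet condition on the closed set $\overline{K_w^c}\cup V_m$, and a further application of \cite[Theorem 10.3]{kig12} produces a regular Dirichlet form on the complementary $L^2$-space, which is supported inside $K_w\setminus V_m$. An analogous argument handles $(\E_\R,\D_{\R,e^w_{c,l}}^0)$ using the closed set $\overline{(e^w_{c,l})^c}\cup V_m$; alternatively, once one identifies this form via the parametrization $\xi_{xy}$ with the classical Dirichlet Laplacian on a bounded interval (scaled by $\gamma_{|w|+1}\rho^{|w|+1}_{c,l}$ and by the relevant weight of $\mu_\eta$), both the Dirichlet-form property and the structure of the spectrum are standard.

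For discrete non-negative spectrum, the argument of Lemma~\ref{lem53} applies verbatim. Compactness of $(K,R_\R)$ together with \cite[Lemma 9.7]{kig12} makes the embedding $\D_\R\hookrightarrow C(K)$ compact, and composition with the continuous inclusion $C(K)\hookrightarrow L^2(K,\mu_\eta)$ yields a compact embedding into $L^2$. Each of the three new domains is a closed subspace of $\D_\R$ in the $\E_\R^{1/2}$-seminorm (being cut out by bounded linear functionals, by (RF4)), so the restricted embeddings remain compact and \cite[Theorems 4 and 5, Chap. 10]{bs87} give the claim. The main obstacle is organisational rather than analytic: keeping the closed-set bookkeeping consistent across all three cases and identifying the natural underlying $L^2$-spaces; no essentially new input beyond Lemmata~\ref{lem51}--\ref{lem53} and the trace theorem for regular Dirichlet forms is required.
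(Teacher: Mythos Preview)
Your proposal is correct and follows essentially the same route as the paper: regularity via \cite[Theorem 10.3]{kig12} (or \cite[Theorem 4.4.3]{fot}) applied to a closed boundary set, and discrete non-negative spectrum via the compact embedding from Lemma~\ref{lem53} together with \cite[Chap.~10, Theorems 4--5]{bs87}. The only cosmetic difference is that the paper phrases the topological input as ``$K_w\setminus V_m$ and $e^w_{c,l}\setminus V_m$ are open'' (which holds because the $m$-cells are positively separated and meet $J_m$ only in $G_w(V_0)\subset V_m$), whereas you equivalently pass to the closed complements $\overline{K_w^c}\cup V_m$; and for discreteness the paper invokes the inclusion $\D_{\R,m}^0\subset\D_\R^0$ directly rather than restating the compact-embedding step.
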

\begin{proof}
Since $K\backslash V_m$ is open,  $(\E_\R,\D_{\R,m}^0)$ is a regular Dirichlet form with \cite[Theorem 10.3]{kig12} or \cite[Theorem 4.4.3]{fot}. Since $\D_{R,m}^0\subset \D_\R^0$ the spectrum is discrete and non-negative with \cite[Theo. 4 Chap. 10]{bs87}. Since $K_w\backslash V_m$ for $w\in A^m$ and $e^w_{c,l}\backslash V_m$ for $w\in A^{k}$ with $k<m$ are also open the rest of the statement follows analogously.

\end{proof}
Again we get an estimate on the eigenvalue counting function:
\begin{align*}
N(\E_\R|_{\D_{\R,m}^0\times \D_{\R,m}^0}, \D_{\R,m}^0,x)\leq N_D^{\mu,\R}(x)
\end{align*}
Due to the finite ramification and the fact that functions in $\D_{\R,m}^0$ have to be zero in $V_m$, this domain splits into the domain restricted to the different parts.
\begin{align*}
\D_{\R,m}^0=\left(\bigoplus_{w\in\A^m}\D_{\R,w}^0\right) \bigoplus \left(\bigoplus_{\substack{c\in \C, l\in\{1,\ldots,\rho(c)\}\\ w\in \A^n, n<m}}\D_{\R, e^w_{c,l}}^0\right)
\end{align*}
That means for the eigenvalue counting function, $\forall x\geq 0$
\begin{align*}
\sum_{w\in\A^m} N(\E_{\R},\D_{\R,w}^0,x)+\sum_{\substack{c\in \C, l\in\{1,\ldots,\rho(c)\}\\ w\in \A^n, n<m}}N(\E_\R,D_{\R,e^w_{c,l}}^0,x)\leq N_D^{\mu,\R}(x)
\end{align*}
Again due to the decoupling, the individual eigenvalue counting functions can be calculated separately.\\

\textbf{L.1 Fractal part $(\E_\R,\D_{\R,w}^0)$}\\[.1cm]
This time we want an upper estimate on the first eigenvalue of $(\E_\R, \D_{\R,w}^0)$ which is positive due to the Dirichlet boundary conditions. This gives us a lower estimate for $N(\E_\R,\D_{\R,w}^0,x)$. The first eigenvalue can be calculated via the following fact
\begin{align*}
\lambda_1^w=\inf_{u\in \D_{\R,w}^0} \frac{\E_\R(u)}{||u||_\mu^2}
\end{align*}
where $||u||_\mu$ denotes the $L^2$ norm with respect to $\mu$. This leads to 
\begin{align*}
\lambda^w_1\leq \frac{\E_\R(u)}{||u||_\mu^2}, \ \text{ for each } u\in \D_{\R,w}^0
\end{align*}
The idea is to find a $u\in \D_{\R,w}^0$ which is ``good enough''.\\

Let us consider the fixed $m$-cell $K_w$. We have Dirichlet boundary conditions on $V_m$. There are $\#V_0$ many points of $V_m$ in $K_w$. Take the smallest $j\in\mathbb{N}$ such that $N^j>\#V_0$. There are $N^j$ many $m+j$-cells inside $K_w$ which means that there is at least one that doesn't include any points of $V_m$. Therefore, there are no Dirichlet boundary conditions anywhere in this $m+j$-cell $K_{\hat w}$ with $|\hat w|=m+j$.

We, however, have to look for an even smaller cell. We want to do the same procedure again and look for a cell that has no common points of $V_{m+j}$ with $K_{\hat w}$. With the same arguments there is an $m+2j$-cell $K_{\tilde w}$ with $|\tilde w|=m+2j$ inside $K_{\hat w}$ that fulfills this requirement.\\

We now want to construct a function on $K_w$ that is in $\D_{\R,w}^0$ with the help of $K_{\tilde w}$. The construction is very similar to the one in the proof of Lemma~\ref{lem63} where we calculated the Hausdorff dimension of $K$ with respect to the resistance metric. Define $u_m$ on $G_{\tilde w}(V_0)$ to be constant $1$. Now search for all $m+2j$-cells that are connected to $K_{\tilde w}$ over some $c\in \C_\ast$. There are at most $M=\#\C\#V_0$ many of those. Set $u_m=1$ on all $c\in C_\ast$ that are connected to $G_{\tilde w}(V_0)$ in $E_{m+2j}$ and also $1$ on all other points that are connected to these $c$. By the way we chose $K_{\tilde w}$ and $K_{\hat w}$ we made sure that all points where we set $u_m$ to be $1$ are not in $V_m$. On all other points of $V_{m+2j}$ we choose $u_m$ to be $0$. Then extend $u_m$ harmonically to be a function in $\D_{\R,w}^0$. Note that the Dirichlet conditions on $V_m$ are fulfilled. Again similar to Lemma~\ref{lem63} we can calculate the energy of $u_m$. 
\begin{align*}
\E_\R(u_n)=\E_{\R,m+2}(u_n)&\leq M\cdot \#E_0\cdot \frac 1{\delta_{m+2j}\min_{e\in E_0}r_0(e)}\\
&\leq \frac{M\#E_0}{\kappa_1 \min_{e\in E_0}r_0(e)}\cdot \lambda^{-(m+2j)}
\end{align*}
\newpage
We also need a lower estimate for the $L^2$-norm of $u_m$ to get an upper estimate on $\lambda_1^w$. But we know that $u_m$ is constant $1$ on $K_{\tilde w}$. Therefore
\begin{align*}
||u_m||_\mu^2&=\int_{K_w}|u_m|^2d\mu\\
&\geq \int_{K_{\tilde w}} \underbrace{|u_m|^2}_{=1}d\mu\\
&=\mu(K_{\tilde w})\\[.2cm]
\Rightarrow \lambda_1^w&\leq \frac {M\#E_0}{\kappa_1\min_{e\in E_0}r_0(e)} \frac{\lambda^{-(m+2j)}}{\mu(K_{\tilde w})}
\end{align*}
We recall that our measures $\mu=\mu_\eta=\eta\mu_I+(1-\eta)\mu_\Sigma$. That means we have 
\begin{align*}
\mu(K_w)=\mu_\eta(K_w)&\geq (1-\eta)\mu_\Sigma(K_w)\\
&=(1-\eta)\left(\frac 1N\right)^{|w|}
\end{align*}
This leads to 
\begin{align*}
\lambda_1^w&\leq \frac {M\#E_0}{\kappa_1\min_{e\in E_0}r_0(e)(1-\eta)}(N\lambda^{-1})^{m+2j}\\
&=\underbrace{\frac {M\#E_0(N\lambda^{-1})^{2j}}{\kappa_1\min_{e\in E_0}r_0(e)(1-\eta)}}_{C_l:=} \cdot \left(\frac N\lambda\right)^m
\end{align*}
Note that $j$ is independent of $m$. For $x\geq C_l(N\lambda^{-1})$ choose $m\in \mathbb{N}$ such that
\begin{align*}
C_l(N\lambda^{-1})^m\leq x<C_l(N\lambda^{-1})^{m+1}
\end{align*}
For these $x$ it holds that at least one eigenvalue of $(\E_\R,\D_{\R,w}^0)$ is smaller than $x$.
\begin{align*}
N(\E_\R,\D_{\R,w}^0,x)&\geq 1\\
\Rightarrow \sum_{w\in \A^m} N(\E_\R,\D_{\R,w}^0,x)&\geq N^m = \frac 1N ((N\lambda^{-1})^{m+1})^{\frac{\ln N}{\ln (N\lambda^{-1})}}\\
&\geq \underbrace{\frac 1N C_l^{\frac{\ln N}{\ln (N\lambda^{-1})}}}_{C_1:=}x^{\frac{\ln N}{\ln (N\lambda^{-1})}}
\end{align*}

\textbf{L.2 Line part}\\[.1cm]
In the previous calculations we saw that the fractal part already gives a lower bound with the same order as the upper bound for $\lambda>\frac 1N$. Therefore, the influence of the line part cannot be bigger than the fractal part. We can use the trivial estimate
\begin{align*}
\sum_{\begin{array}{c}c\in \C, l\in\{1,\ldots,\rho(c)\}\\ w\in \A^n , n<m\end{array}}N(\E_\R|_{\D_{\R,e^w_{c,l}}^0\times \D_{\R,e^w_{c,l}}^0}, \D_{\R,e^w_{c,l}}^0, x)\geq 0
\end{align*}
If, however, $\lambda\leq \frac 1N$ this order of $\frac{\ln(N)}{\ln(N/\lambda)}$ is at most $\frac 12$, so it is not the one we want. To achieve the right one, we can use just one of the one-dimensional lines, say $e_{c,l}$:
\begin{align*}
\sum_{\begin{array}{c}c\in \C, l\in\{1,\ldots,\rho(c)\}\\ w\in \A^n , n<m\end{array}}&N(\E_\R|_{\D_{\R,e^w_{c,l}}^0\times \D_{\R,e^w_{c,l}}^0}, \D_{\R,e^w_{c,l}}^0, x)\\
&\geq N(\E_\R|_{\D_{\R,e_{c,l}}^0\times \D_{\R,e_{c,l}}^0}, \D_{\R,e_{c,l}}^0, x)\\
&\geq N(-\Delta_D|_{(0,1)},a_{c,l}\rho^1_{c,l}x)\\
&\geq \frac 1\pi \sqrt{a_{c,l}\rho^1_{c,l}} \cdot x^{\frac 12}
\end{align*}

This suffices to show the desired result if our measure includes the fractal part.\\

\textbf{Remaining: }$\boldsymbol{\mu=\mu_1=\mu_I}$\\[.1cm]
We still need to show the case if $\mu=\mu_1=\mu_I$. Then we know that
\begin{align*}
\mu_I(K_w)=\beta^{|w|}
\end{align*}
Whenever we used $(1-\eta)\left(\frac 1N\right)^{|w|}\leq \mu_\eta(K_w)\leq \left(\frac 1N\right)^{|w|}$ in the proof for $\mu_\eta$ with $\eta\in (0,1)$ we can exchange this estimate with 
\begin{align*}
\mu(K_w)=\beta^{|w|}
\end{align*}
For $\beta\neq\frac 1{N^2\lambda}$ the rest of the proof works exactly the same as in the case $\eta\in(0,1)$ and this leads to the asymptotic growing
\begin{align*}
\max\left\{\frac{\ln N}{-\ln (\beta \lambda)},1\right\}
\end{align*}
However, if $\beta=\frac 1{N^2\lambda}$, i.e. $N^2\beta\lambda=1$ we can't change $\beta$ to $\tilde \beta=\beta+\epsilon$ as in the case $\eta\in(0,1)$ since we need the exact value $\beta$ for the future calculation. This leads to an additional $log(x)$ term in the upper bound. We will not include this result in the theorem since it doesn't fit to the other cases.
\hfill $\square$

\section{Outlook and further research}\label{chapter8}
\subsection*{Existence of regular harmonic structures}
The idea of this work was very similar to \cite{kl93}. Namely, if we have a regular harmonic structure we can choose a sequence and thus get resistance forms. After choosing a measure we get Dirichlet forms and thus operators. We showed the existence of regular harmonic structures for a few examples by explicitly calculating them. The question remains, in which cases such a regular harmonic structure exists. One possible approach is to show that if we have a harmonic structure in the self-similar case, this also induces one in the stretched case. In all our examples this was the case, since we always used the same resistances on $(V_0,E_0)$ as in the self-similar case. This means, the choice of $r_0$ was influenced by the existence of a regular harmonic structure on the self-similar set.

If we have no way to compare it to the self-similar case we would still like to prove existence for as many sets as possible. The first set of fractals for which we would like to try this would be stretched nested fractals. As in \cite{lin90} this could mean getting the existence without knowing the value of $\lambda$.

\subsection*{Comparison of $\protect d_S$ in the self-similar and the stretched case}
We saw in the examples that the values for Hausdorff dimension and leading term for the asymptotics in the stretched case are less or equal than in the self-similar case. We believe this is always true. 

We can give heuristic arguments for this conjecture. If we set the resistances $\rho=0$ on all connecting edges in $E^I_1$ this would mean that points that were connected by this edge get identified with each other. This gives us back the first graph approximation in the self-similar case. $(V_0,E_0)$ is the same for either self-similar and stretched case. By increasing $\rho>0$ on the edges in $E^I_1$ we still want to have an equivalent network for $(V_0,E_0)$. This means that the effective resistance between those points in $V_0$ has to stay the same. However, we know from general electrical theory that if we increase the resistances on the connecting edges, the resistances on the fractal edges in $E_1^\Sigma$ have to decrease in order to keep the effective resistances at the same level.

For Hata's tree we saw that the same values as for the self-similar case was not possible. This gives rise to the question in which cases this is possible and to find criteria to characterize stretched fractals.

\subsection*{More general harmonic structures and measures}
The harmonic structures that we used are very symmetric. We have the same renormalization in each cell. This is a big restriction and there will likely be stretched fractals for which there is no regular harmonic structure that fulfills this symmetry. This means we need to generalize our notion of harmonic structure to allow different scaling in different cells. 

We, however, believe that there isn't any new difficulty in obtaining Hausdorff dimension and the leading term in the asymptotics. We need to introduce a few more indices and to make sure the scalings for different cells converge on their own to a limit. With such conditions we should be able to prove the results with a combination of the proof in this work and the ideas from \cite{kl93} or \cite{kaj10} concerning partitions of the word space.

The same holds for the measures that we used. These were very symmetric and we should substitute them for more general ones. We want to allow different scaling in different cells for both fractal- and line-part of the measure. But again, there should be no new difficulties in obtaining Hausdorff dimension and leading term of the spectral asymptotics by connecting the ideas of this work and \cite{kl93,kaj10}.

\subsection*{Does the fractal part of the resistance form really exist}
Besides the construction of resistance forms on the Stretched Sierpinski Gasket, the main result of \cite{afk17} examined the resistance forms $\E_\R$. In particular the authors studied the fractal part $\E_\R^\Sigma$ and showed that it only survives in a special case. In our notion this is the case $\sum_{i\geq 1} |\lambda_i-\frac 35|<\infty$. In all other cases we have $f\in \F_\R \Rightarrow \E_\R^\Sigma(f)=0$. 

This question can be generalized to stretched fractals. When does the fractal part $\E_\R^\Sigma$ in the resistance form of a stretched fractal survive. The immediate conjecture is: If $\lambda_{ss}$ is the renormalization in the self-similar case we conjecture that $\E_\R^\Sigma$ survives if and only if we have $\sum_{i\geq 1} |\lambda_i-\lambda_{ss}|<\infty$ for the sequence of regular harmonic structures.

However, it is not possible to apply the same proof as in \cite{afk17} since it strongly depends on the value of $\lambda_{ss}=\frac 35$. These values are not known in general.

\subsection*{More stretching}
We were able to stretch p.c.f. self-similar fractals that fulfilled a certain connectivity condition (\ref{pcfcond},\ref{pcfcond2}). We did this by introducing one-dimensional lines. We could fill the holes with other objects than just lines. For example for each $c\in \C$ we could fill the hole with a fractal that has $\rho(c)$ many boundary points. We saw that the one-dimensionality of the lines influenced the dimension as well as the leading term of the spectral asymptotics. It would be interesting to see how other objects would influence these values. 
\begin{figure}[H]
\centering
\includegraphics[width=0.6\textwidth]{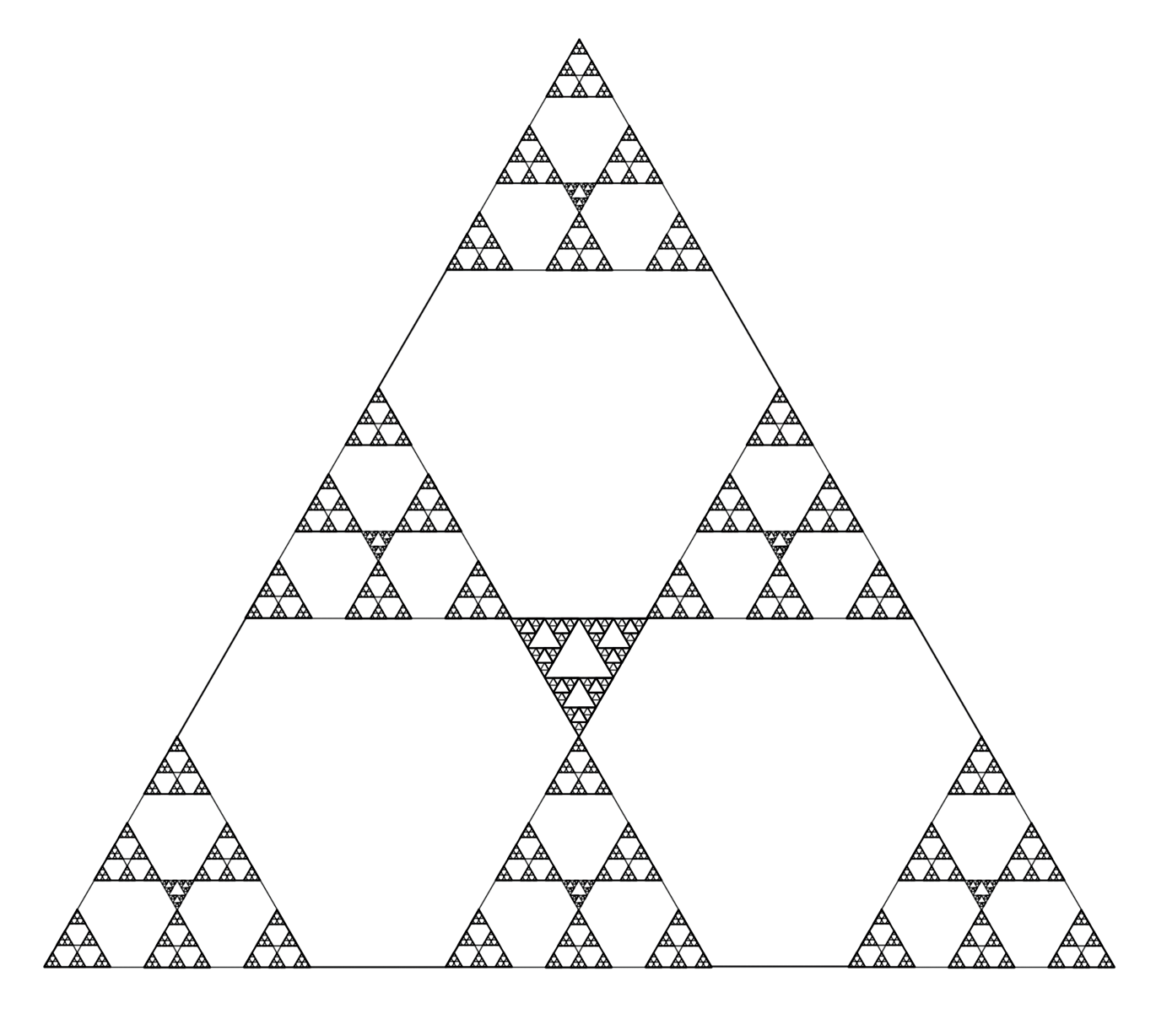}
\caption{Filling the hole with self-similar Sierpinski Gasket}
\end{figure}
We can also stretch sets that are not p.c.f., for example, the unit square $[0,1]^2$. It is the attractor of four similitudes
\begin{align*}
F_1\begin{pmatrix}x_1\\x_2\end{pmatrix}&=\begin{pmatrix}0.5&0\\0&0.5  \end{pmatrix}\begin{pmatrix}x_1\\x_2\end{pmatrix}\\
F_2\begin{pmatrix}x_1\\x_2\end{pmatrix}&=\begin{pmatrix}0.5&0\\0&0.5 \end{pmatrix}\begin{pmatrix}x_1\\x_2\end{pmatrix}+\begin{pmatrix}0.5\\0\end{pmatrix}\\
F_3\begin{pmatrix}x_1\\x_2\end{pmatrix}&=\begin{pmatrix}0.5&0\\0&0.5  \end{pmatrix}\begin{pmatrix}x_1\\x_2\end{pmatrix}+\begin{pmatrix}0\\0.5\end{pmatrix}\\
F_4\begin{pmatrix}x_1\\x_2\end{pmatrix}&=\begin{pmatrix}0.5&0\\0&0.5  \end{pmatrix}\begin{pmatrix}x_1\\x_2\end{pmatrix}+\begin{pmatrix}0.5\\0.5\end{pmatrix}
\end{align*}
However, there is not an obvious way to connect the copies if we lower the contraction ratios. We could still be using one-dimensional lines. It is also not obvious how we have to place these lines. This procedure, however, changes the connectedness of the fractal and gives us a completely new fractal which has to be analyzed geometrically and analytically. We have to place the lines in such a way that it connects the \mbox{1-cells}~$\Sigma_i$ to ensure connectedness.
\begin{figure}[H]
\centering
\includegraphics[width=0.4\textwidth]{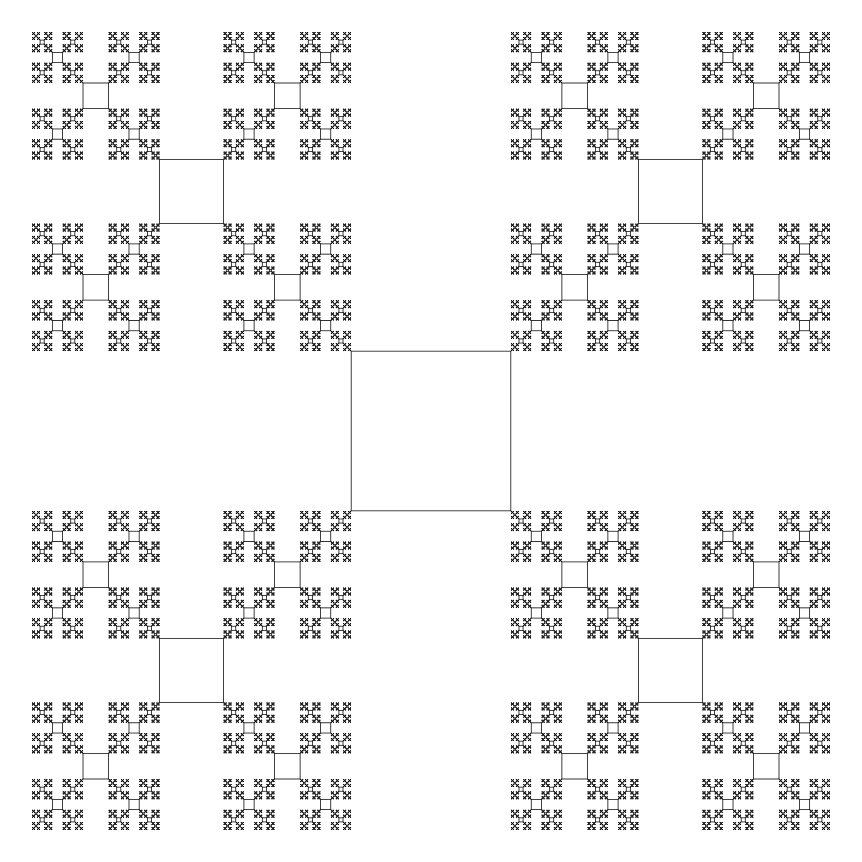}
\caption{Stretched unit square - version 1}
\end{figure}
We can also place the lines somewhere else.
\begin{figure}[H]
\centering
\includegraphics[width=0.4\textwidth]{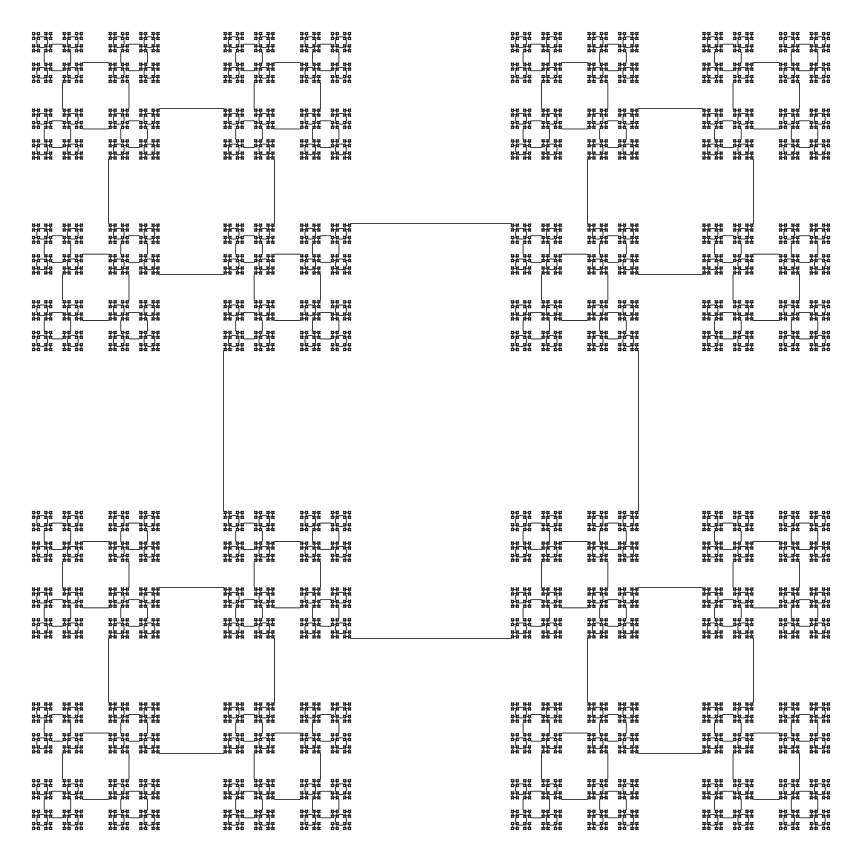}
\caption{Stretched unit square - version 2}
\end{figure}
Another way to connect the copies is to use two-dimensional areas.
\begin{figure}[H]
\centering
\includegraphics[width=0.4\textwidth]{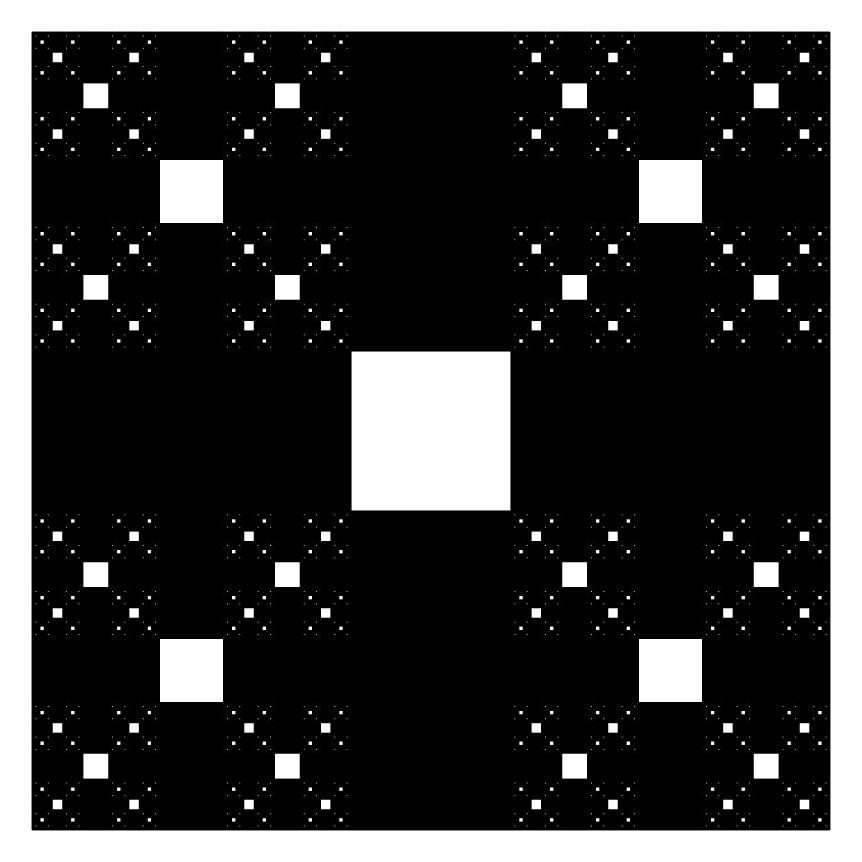}
\caption{Stretched unit square - version 3}
\end{figure}
This gives us a completely different fractal. The two-dimensional part will dominate the geometric and analytical appearance. 

There are many ways to connect the copies between one- and two-dimensional objects. This gives rise to many new and interesting fractals. 


\newpage

\end{document}